\numberwithin{equation}{section}
\newtheorem{theorem}{Theorem}[section]
\newtheorem{lemma}[theorem]{Lemma}
\newtheorem{proposition}[theorem]{Proposition}
\theoremstyle{remark}
\newtheorem{remark}[theorem]{Remark}
\theoremstyle{definition}
\newtheorem{definition}[theorem]{Definition}
\newcommand{\fint}{\,-\mspace{-19.4mu}\int}
\newcommand{\R}{\mathbb{R}}
\newcommand{\N}{\mathbb{N}}
\newcommand{\C}{\mathbb{C}}
\newcommand\T{\mathbb T}
\newcommand\D{\mathbb D}
\renewcommand       {\Im}    {\operatorname{Im}} % (overriding default)
\DeclareMathOperator{\ran}   {\mathrm{ran}}
\DeclareMathOperator{\codim} {\mathrm{codim}}
\renewcommand       {\Re}    {\operatorname{Re}}  % (overriding default)
\newcommand{\without}{\setminus}
\newcommand{\by}{\times}
\newcommand{\sub}{\subset}
\newcommand{\maps}{\colon}
\title{Multipole vortex patch equilibria for active scalar equations}
\author{Zineb Hassainia \and  Miles H. Wheeler}
\begin{document}

\begin{abstract}
  We study how a general steady configuration of finitely-many point vortices, with Newtonian interaction or generalized surface quasi-geostrophic interactions, can be desingularized into a steady configuration of vortex patches. The configurations can be uniformly rotating, uniformly translating, or completely stationary. Using a technique first introduced by Hmidi and Mateu \cite{Hmidi-Mateu} for vortex pairs, we reformulate the problem for the patch boundaries so that it no longer appears singular in the point-vortex limit. Provided the point vortex equilibrium is non-degenerate in a natural sense, solutions can then be constructed directly using the implicit function theorem, yielding asymptotics for the shape of the patch boundaries. As an application, we construct new families of asymmetric translating and rotating pairs, as well as stationary tripoles. We also show how the techniques can be adapted for highly symmetric configurations such as regular polygons, body-centered  polygons and nested regular polygons by integrating the appropriate symmetries into the formulation of the problem. 
\end{abstract}

\maketitle

\tableofcontents

\section{Introduction}

\subsection{Historical discussion}
In this note we consider the generalized surface quasi-geostrophic (gSQG)  equations,  which describe the evolution of the potential temperature $\omega$ through the transport equation
\begin{equation} \label{eqn:omega}
  \left\{ \begin{alignedat}{2}
    &\partial_{t}\omega+v\cdot\nabla\omega=0,\quad&&(t,x)\in\R_+\times\R^2, \\
    &v=\nabla^\perp\psi,\\
    &\psi=-(-\Delta)^{-1+\frac{\alpha}{2}}\omega,\\
    &\omega_{|t=0}=\omega_0.
  \end{alignedat} \right.
\end{equation}
Here $\nabla^\perp=(-\partial_2,\partial_1)$  while $\alpha \in [0,1)$ is a real parameter. The vector field $v$ is the flow velocity and the fractional Laplacian operator $(-\Delta)^{-1+\frac{\alpha}{2}}$ is of convolution type, defined by 
\begin{equation*}
-(-\Delta)^{-1+\frac{\alpha}{2}} \omega(x)=\int_{\R^2}K_\alpha(x-y)\omega(y)dy
\end{equation*}
 with
\begin{equation} \label{eqn:kalpha}
  K_\alpha(x):=\begin{cases}
    \displaystyle \frac{1}{2\pi}\ln|x|& \text{if }\alpha=0, \\[2ex]
    \displaystyle-\frac{C_\alpha}{2\pi}\frac{1}{|x|^\alpha} & \text{if }\alpha\in(0,1), \quad \text{with } \quad \displaystyle C_\alpha=\frac{\Gamma(\alpha/2)}{2^{1-\alpha}\Gamma(\frac{2-\alpha}{2})},
  \end{cases}
\end{equation}
where $\Gamma$ is the gamma function. This model was proposed by C\'ordoba et al.\@ \cite{C-F-M-R} as an interpolation between Euler equations and the surface quasi-geostrophic model, which correspond to $\alpha=0$ and $\alpha=1$, respectively. 

The  main purpose of this note is to show the existence of new families of periodic global solutions of \eqref{eqn:omega} in the vortex patch  setting, namely when $\omega(\,\cdot\,,t)$ is the characteristic function of finite collection of bounded domains. Such patterns are a special class of Yudovich solutions where $\omega(\,\cdot\,,t)$ is merely  bounded and integrable. Yudovich solutions are known to be unique and to exist globally in time in the case of Euler equations \cite{Y}, but for $\alpha>0$ the situation is more delicate because the velocity field $v$ is in general not Lipschitz. Nonetheless, when the initial datum has a patch structure, one can locally construct a unique solution which remains a patch. The motion of the boundary of the patch is governed by so-called contour dynamics equations; see \cite{Gancedo, Rodrigo}. It is worth mentioning that, while the boundary's regularity is globally preserved for $\alpha=0$ \cite{C,CB}, for $\alpha>0$ numerical evidence \cite{C-F-M-R} suggests singularity formation in finite time.
  
There are very few explicit solutions to the gSQG and Euler equations. The only known explicit simply-connected vortex patch solutions are the Rankine vortex, which is stationary, and the Kirchhoff ellipses \cite{K} for the Euler equation, which are rotating. Nevertheless, a family of uniformly  rotating patches with $m$-fold symmetry, called V-states, was numerically computed by Deem and Zabusky \cite{DZ}. Later, Burbea \cite{Bu} gave an analytical proof of their existence, based on a conformal mapping parametrization and local bifurcation theory. Recently, Burbea's branches of solutions were extended to global ones \cite{HMW}. The regularity and the convexity of the V-states  have been investigated in \cite{HMV,HMW,CCG1}. Similar research has been carried out for the gSQG  equations: The construction of simply connected V-states was established in \cite{HH,CCG}, and their boundary regularity was discussed in \cite{CCG1}. 

We point out that there is a large literature dealing with rotating vortex patches and related problems. For instance, we mention  the existence results of rotating patches close to Kirchhoff’s ellipses \cite{HM2,CCG1}, multiply-connected patches \cite{HMV2,DHMV,HM3, DHH,R,Gom}, patches in bounded domains \cite{DHHM}, non-trivial rotating smooth solutions \cite{CCG2} and rotating vortices with non-uniform densities \cite{GHS}. The radial symmetry properties of stationary and uniformly-rotating solutions was studied in a series of works \cite{F,Hmidi,gomez2019symmetry}.

All of the above analytical results treat connected patches. The first numerical works revealing the existence of translating symmetric pairs of simply connected patches for Euler equation are due to Deem and Zabusky \cite{DZ} and Pierrehumbert \cite{P}. Similar studies were performed  by Saffman and Szeto in \cite{SS} for the symmetric co-rotating vortex pairs and by Dritschel \cite{D} for asymmetric pairs. Later, Turkington gave in \cite{T} an analytical proof using variational arguments, where he considered an unbounded fluid domain with $N$ symmetrically arranged vortex patches rotating about the origin. Implementing the same approach, Keady \cite{Keady}   proved the existence  of translating pairs of symmetric patches and Wan \cite{Wan} studied the  existence and stability of desingularizations of a general system of rotating point vortices. Very recently, Godard-Cadillac, Gravejat and Smets \cite{GGS} extended Turkington's result to the gSQG equations, while Ao, D\'avila, Del Pino, Musso and Wei \cite{ADDMW} have obtained related families of smooth solutions via gluing techniques.  See \cite{N,No,O,S,New-fam} for additional references on multiply connected patches.

The variational arguments \cite{T,Keady,GGS} do not give much information about the shape of the vortex patches, or about the uniqueness of solutions. In \cite{Hmidi-Mateu}, however, Hmidi and Mateu gave a direct proof showing the existence of co-rotating and counter-rotating vortex pairs, using an elegant desingularization of the contour dynamics equations and an application of the implicit function theorem. The same technique was implemented  for the desingularization of the asymmetric pairs \cite{HH2}, K\'arm\'an street \cite{G-Kar} and the vortex polygon \cite{G}. See \cite{cqzz:doubly} for related results where point vortices are instead desingularized into doubly-connected patches. We mention that, using more sophisticated Nash--Moser techniques, G\'omez-Serrano, Park and Shi \cite{Gomez-Jaemin-Jia2021} have very recently constructed stationary configurations of multi-layered patches with finite kinetic energy. Also, Garc\'ia and Haziot \cite{gh:global} have combined ideas from \cite{Hmidi-Mateu} and \cite{HMW} to prove a global bifurcation result for co-rotating and counter-rotating pairs. 

In this note we show how the technique in \cite{Hmidi-Mateu} can be extended to arbitrary configurations of finitely-many point vortices. For a general configuration, the problem reduces via Lyapunov--Schmidt to a finite-dimensional nonlinear equation. Under an natural non-degeneracy assumption on the point vortex configuration alone, one can instead simply apply a modified version of the implicit function theorem. For highly symmetric configurations, we may also simplify the problem by reformulating it in spaces which take these symmetries into account.

\subsection{Statement of the general result}

Recall that the gSQG point vortex model for $N$ interacting vortices in the complex plane $\mathbb{C}$ is given by the Hamiltonian system
\begin{align}\label{ode-sys0}
\frac{d}{dt}z_{j}(t)=\frac{i\widehat{C}_\alpha }{2}\sum_{\substack{k=1, k\neq j}}^{N}\gamma_k   \frac{z_{j}(t)-z_{k}(t)}{|z_{j}(t)-z_{k}(t)|^{\alpha+2}}, 
\quad j=1,\ldots,N,
\end{align}
where $z_1(t),\ldots,z_N(t)$ are the point vortex locations, $\pi\gamma_1,\ldots, \pi\gamma_N\in \R\setminus\{0\}$ are the circulations and 
\begin{equation} \label{eqn:kalpha2}
\widehat{C}_\alpha :=\alpha  C_\alpha=\frac{2^{\alpha}\Gamma(1+\alpha/2)}{\Gamma(1-\alpha/2)}.
\end{equation} 
The case  $\alpha=0$ corresponds to the  classical point vortex Eulerian interaction. A general review about the $N$-vortex problem and vortex statics can be found in \cite{Aref} for the Newtonian interaction and \cite{Ros} for gSQG interactions.
We are concerned with periodic solutions for which the configuration of vortices is instantaneously moving as a rigid body, so that
\begin{align}
  \label{eqn:ode}
 \frac{d}{dt} z_j(t) = iU + i\Omega z_j(t),
\end{align}
where here $U \in \R$ is the constant linear velocity and $\Omega \in \R$ is the constant angular velocity. Such solutions are known as \emph{relative equilibria} or \emph{vortex crystals}.  Explicitly solving \eqref{eqn:ode}, we see that whenever $\Omega \ne 0$ we can shift coordinates so that $U=0$. Thus there is no loss of generality in restricting our attention to equilibria which are either \emph{rotating} with $\Omega \ne 0$ and $V=0$, \emph{translating} with $U \ne 0$ and $\Omega = 0$, or \emph{stationary} with $U=\Omega=0$.

Setting $z_j=x_j+iy_j:=z_j(0)$, \eqref{ode-sys0} reduces to the algebraic system  
\begin{align}\label{alg-sysP}
  \mathcal P_j^\alpha(\lambda)
  &=
  \Omega z_{j}+U-  \frac{\widehat{C}_\alpha}{2}\sum_{\substack{k=1, k\neq j}}^{N} \gamma_k \frac{z_j-z_{k}}{|z_j-z_{k}|^{\alpha+2}}=0,\quad j=1,\ldots,N,
\end{align}
where here $\lambda=(x_1,\ldots,x_N,y_1,\ldots,y_N, \gamma_1,\ldots,\gamma_N,\Omega, U)$. Taking real and imaginary parts, this defines a mapping $\mathcal P^\alpha(\lambda)$ with values in $\R^{2N}$. 
\begin{definition}\label{def:non-deg}
  We call a rigidly rotating or translating solution $\lambda^*$ of \eqref{alg-sysP} \emph{non-degenerate} if, after a reordering of the entries of $\lambda$, we can write 
  \begin{align}
    \label{non-deg-codim1}
    \lambda=(\lambda_1,\lambda_2)\quad \textnormal{with} \quad \lambda_1
    \in \R^{2N-1} \quad \textnormal{and}  \quad \codim \ran  D_{\lambda_1}\mathcal{P}^\alpha(\lambda^*) = 1.
  \end{align} 
  We call a stationary  solution $\lambda^*$ of \eqref{alg-sysP} (with $\Omega=U=0$) \emph{non-degenerate} if,
  after a reordering of the entries of $\lambda$, we can write 
  \begin{align}
    \label{non-deg-codim3}
    \lambda=(\lambda_1,\lambda_2)\quad \textnormal{with} \quad \lambda_1\in \R^{2N-3} \quad \textnormal{and}  \quad  \codim\ran D_{\lambda_1}\mathcal{P}^\alpha(\lambda^*) = 3.
  \end{align} 
\end{definition}
 
\begin{figure}
  \centering
  \includegraphics[scale=1.1]{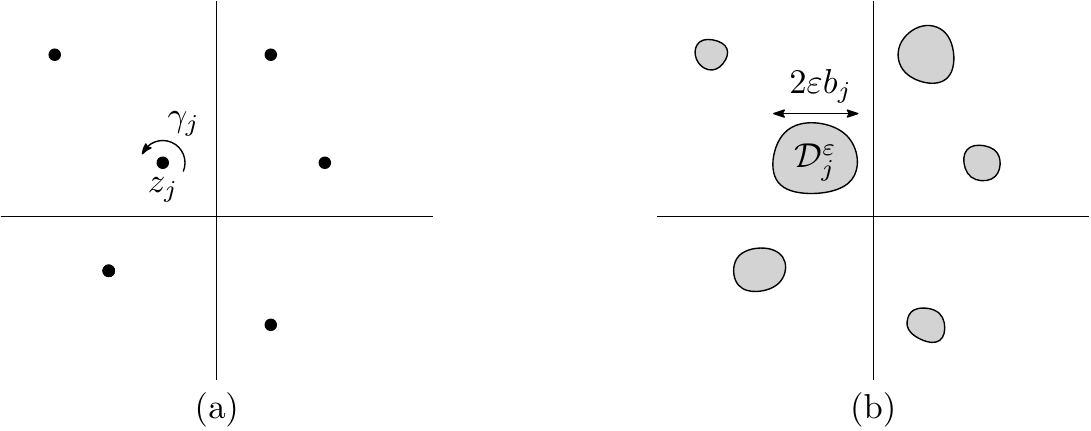}
  \caption{(a) A solution of the point vortex system \eqref{alg-sysP}. The vortices are located at the points $z_j(0):=z_j$ and have circulations $\pi\gamma_j$. (b) A desingularization into vortex patches. The vortex at $z_j$ has become a small nearly-circular patch $\mathcal D^\varepsilon_j$. To leading order in the small parameter $\varepsilon$, the radius of the patch is $\varepsilon b_j$, and the net circulation is $\pi\gamma_j$.
  \label{fig:general}
  }
\end{figure}

Informally stated, our first result is the following; see Theorems~\ref{prop:ift} and \ref{theorem-Phi-stationary} for a precise version.
\begin{theorem}\label{thm:general}
  Let $\alpha \in [0,1)$. Then any non-degenerate solution $\lambda$ of \eqref{alg-sysP} can be desingularized into a family of vortex patch equilibria depending smoothly on a small parameter $\varepsilon > 0$ measuring the size of the patches.
\end{theorem}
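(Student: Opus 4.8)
The plan is to recast the steady vortex-patch problem as a single equation $F(\varepsilon,f)=0$ that is regular in the point-vortex limit $\varepsilon\to 0$, and then solve it by an implicit function theorem adapted to the symmetries recorded in Definition~\ref{def:non-deg}. Fixing a non-degenerate solution $\lambda^*$ of \eqref{alg-sysP}, I would parametrise the boundary $\dell\mathcal{D}_j^\varepsilon$ of the $j$th patch as the near-circle
\begin{equation*}
  \theta\ \longmapsto\ z_j+\varepsilon b_j\bigl(1+f_j(\theta)\bigr)e^{i\theta},\qquad \theta\in[0,2\pi),
\end{equation*}
and prescribe the constant value of $\omega$ on $\mathcal{D}_j^\varepsilon$ to be $\gamma_j/(\varepsilon^2 b_j^2)$, so that the net circulation is exactly $\pi\gamma_j$ for every $\varepsilon$. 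Writing the induced velocity through the Biot--Savart kernel $\nabla^\perp K_\alpha$ of \eqref{eqn:kalpha}, the condition that the configuration be a relative equilibrium rotating at rate $\Omega$ and translating at velocity $U$ is that on each $\dell\mathcal{D}_j^\varepsilon$ the velocity relative to the moving frame be tangent to the boundary. Collecting the shape functions $f=(f_1,\dots,f_N)$ in a product of H\"older spaces and the data $(z_j,b_j,\Omega,U)$ as finite-dimensional parameters, this tangency condition defines the map $F$.

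The structural heart of the argument is that, with the above amplitude scaling, $F$ extends smoothly across $\varepsilon=0$ and its value there encodes \eqref{alg-sysP}. On $\dell\mathcal{D}_j^\varepsilon$ I would split the induced velocity into the self-part from $\mathcal{D}_j^\varepsilon$ and the cross-parts from the remaining patches. For the cross terms the separation $\abs{z-y}\approx\abs{z_j-z_k}$ stays of order one, so a Taylor expansion together with the normalisation $\int_{\mathcal{D}_k^\varepsilon}\omega=\pi\gamma_k$ shows that at leading order they reproduce the point-vortex velocities
\begin{equation*}
  \frac{i\widehat{C}_\alpha}{2}\sum_{k\neq j}\gamma_k\,\frac{z_j-z_k}{\abs{z_j-z_k}^{\alpha+2}}
\end{equation*}
of \eqref{ode-sys0}, while the frame contributes the rigid-motion velocity $i\Omega z_j+iU$. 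The self-part, once rescaled, converges to the velocity of a uniform disk, which is purely tangential and so does not affect the normal component at leading order. Expanding in Fourier modes in $\theta$, the $n=1$ coefficient of the normal relative velocity is then proportional to $\mathcal{P}_j^\alpha(\lambda^*)$, so that $F(0,0)=0$ holds exactly when $\lambda^*$ solves \eqref{alg-sysP}.

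Next I would linearise $F$ in $f$ at $(\varepsilon,f)=(0,0)$. The dominant term is the self-interaction operator, which for a uniform disk is diagonal in the Fourier basis: it annihilates the modes $n=0$ and $n=1$, corresponding to dilating and translating the patch, and is invertible on the modes $n\geq 2$. Fixing the area removes the $n=0$ mode, the two-dimensional $n=1$ mode is governed by the centres $z_j$, and on the complementary modes $n\geq 2$ the implicit function theorem solves for $f$ directly. What remains is the Lyapunov--Schmidt reduced equation on the translation modes, which by the computation above coincides with the linearisation $D\mathcal{P}^\alpha(\lambda^*)$ of the point-vortex map together with the $\Omega,U$ derivatives. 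The non-degeneracy condition \eqref{non-deg-codim1} in the rotating or translating case, respectively \eqref{non-deg-codim3} in the stationary case, states that after freezing the gauge variables $\lambda_2$ the restricted differential $D_{\lambda_1}\mathcal{P}^\alpha(\lambda^*)$ is injective with cokernel of dimension $1$ (respectively $3$). This deficiency is accounted for by the rotational invariance (respectively the two translational and one rotational invariances) of the full patch problem, which forces the range of the linearised operator to miss exactly those directions; hence a modified implicit function theorem applies and produces a unique smooth branch $\varepsilon\mapsto(f(\varepsilon),z_j(\varepsilon),b_j(\varepsilon),\Omega(\varepsilon),U(\varepsilon))$ emanating from $\lambda^*$. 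The precise statements are Theorems~\ref{prop:ift} and \ref{theorem-Phi-stationary}.

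I expect the main obstacle to be establishing the smoothness of $F$ up to and including $\varepsilon=0$, uniformly in the shrinking patches. Because $\nabla^\perp K_\alpha$ is homogeneous of degree $-\alpha-1$, the self-induced boundary integral is genuinely singular, and showing that it defines a $C^1$ (indeed smooth) map between H\"older spaces---with all estimates uniform as $\varepsilon\to 0$ and valid simultaneously for the logarithmic case $\alpha=0$ and the power-law cases $\alpha\in(0,1)$---requires the careful desingularisation of the contour-dynamics equations in the spirit of \cite{Hmidi-Mateu}. Tracking the $\varepsilon$-expansions of both the self- and cross-terms to the order needed for the asymptotic description of $\dell\mathcal{D}_j^\varepsilon$ is where the bulk of the technical work lies.
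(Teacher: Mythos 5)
Your proposal follows the same overall route as the paper: rescale the contour-dynamics equations so that the $\varepsilon\to0$ limit is regular, identify the limiting equation with the point-vortex system \eqref{alg-sysP}, and close the argument with the non-degeneracy hypothesis of Definition~\ref{def:non-deg} together with a modified implicit function theorem (this is exactly what Theorems~\ref{prop:ift} and~\ref{theorem-Phi-stationary} do). The bookkeeping differs in two minor ways. You perturb radially, $z_j+\varepsilon b_j\bigl(1+f_j(\theta)\bigr)e^{i\theta}$, so your linearised self-interaction operator has a kernel at the modes $n=0,1$, which you then remove by fixing areas and Lyapunov--Schmidt-reducing onto the translation modes. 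The paper instead parametrises each boundary by an exterior conformal map $\phi_j(w)=w+\varepsilon b_jf_j(w)$ (with amplitude $\varepsilon|\varepsilon|^{\alpha}b_j^{1+\alpha}$ in the gSQG case), where $f_j$ has only negative Fourier modes; dilations and translations are thereby absorbed into the parameters $b_j$ and $z_j$, the kernel of $D_f\mathcal{G}^\alpha$ is trivial, and the whole defect appears as a cokernel of dimension $2N$ (Proposition~\ref{proposition2}), which is handled by adjoining the point-vortex parameters $\lambda_1$ as unknowns rather than by a separate reduction. These two schemes are equivalent in substance.

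The one genuine gap is in your final step. You argue that the residual one- (or three-) dimensional deficiency is accounted for by the rotational (respectively translational and rotational) invariance, ``which forces the range of the linearised operator to miss exactly those directions; hence a modified implicit function theorem applies.'' Information about the range of the linearisation is not enough: even if that range has codimension one, the nonlinear equation could still have a nonvanishing component in the missing direction, and no implicit-function-type argument can exclude this from linear data alone. What closes the argument in the paper is that the nonlinear functional satisfies exact integral identities valid for \emph{all} $(\varepsilon,f;\lambda)$ --- Lemma~\ref{coro:idG}, deduced from the stream-function identities of Lemma~\ref{lem:idens} --- and the abstract Lemma~\ref{abstract-lemma} takes precisely these nonlinear identities as hypotheses; its proof stresses that the full force of the nonlinear conditions, and not just their linearisations at the origin, is needed to show that the residual component $h$ vanishes identically. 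To complete your proof you must therefore establish, for your functional $F$, the analogous conservation-law identities (of linear- and angular-impulse type) holding for every $\varepsilon$ and every shape $f$, not merely observe the invariance at the linearised level.
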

See Figure~\ref{fig:general} for an illustration.
\begin{remark}\label{rk:general}
  As we shall see later in Theorems~\ref{prop:ift} and \ref{theorem-Phi-stationary}, the vortex patches in Theorem~\ref{thm:general} are small $C^{1+\beta}$ perturbations of the unit disk, whose boundaries are given by conformal parametrizations which can be explicitly expanded to any order in the small parameter $\varepsilon$.
  In particular, the conformal parametrizations of the boundaries  $\phi_j\colon \mathbb{T}\to \partial \mathcal{O}_j^\varepsilon$ have the explicit Fourier asymptotic expansions
  $$
  \phi_j(\varepsilon,w)=w+(\varepsilon {b_j})^{\alpha+2} \Xi_\alpha\sum_{k=1, k\neq j}^N\frac{\gamma_k}{\gamma_j} \frac{(\overline{z_k}-\overline{z_j})^2}{ |z_k -z_j|^{\alpha+4}}\,\overline{w}+o(\varepsilon^{2+\alpha})  , \quad \Xi_\alpha:=\frac{(\alpha+2)\Gamma(1-\frac\alpha2)\Gamma(3-\frac\alpha2)}{4\Gamma(2-\alpha)}.
  $$
  Furthermore, with slight modifications in  the proof we can show that the boundary of each vortex patches  belongs to $C^{n+\beta}$ for any fixed $n\in \mathbb{N}$. The range of $\varepsilon$ would be not uniform with respect to $n$ but would shrink to zero as $n$ goes to infinity. However, using the approach developed in \cite{CCG1}, we expect that the conformal mappings possess holomorphic extensions outside of a small disc and thus that the boundaries are analytic. In the Euler case, the analyticity of sufficiently smooth patch boundaries could alternatively be proved using the same technique as in \cite{HMW}.
\end{remark}
\begin{remark}\label{rem:b}
  The leading-order ratios between the sizes of the patches can be specified a priori. Moreover, the range of $\varepsilon$ is uniform as some of these ratios are sent to zero, allowing us to recover solutions involving a combination of point vortices and vortex patches. See Remark~\ref{rem:bift} for more details.
\end{remark}
\begin{remark}
  While the proof is valid for $\alpha\in [0, 1)$, we expect that a similar result can be proved for 
  $\alpha\in [1, 2)$ using the spaces introduced in \cite{CCG}; see \cite{cqzz:alpha}.
\end{remark}

\subsection{Applications} \label{sec:applications}
There are many point vortex equilibria satisfying the non-degeneracy assumption in Theorem~\ref{thm:general}. We shall give several examples where this assumption can be easily checked and the resulting vortex patch solutions are, to the best of our knowledge, new. 

The most elementary solutions to \eqref{alg-sysP} are co-rotating and counter-rotating vortex pairs. A family of asymmetric co-rotating pairs is given by
\begin{equation}\label{v-pair-rot}
  \lambda^*:=(x_1^*,x_2^*,y_1^*,y_2^*;\gamma_1^*,\gamma_2^*;\Omega^*, U^*)=\Big(d,-\mathtt{c} d,0,0;\mathtt{c}\gamma,\gamma;\frac{\gamma \widehat{C}_\alpha}{2d^{\alpha+2}(1+\mathtt{c})^{\alpha+1}},0\Big),
\end{equation} 
where $\gamma\in\mathbb{R} \without \{0\}$, $d > 0$ and $0 < |\mathtt{c}|<1$; see Figure~\ref{fig:pairs}(a). In the time-dependent problem, the two vortices steadily rotate about the origin with angular velocity $\Omega^*$. Counter-rotating pairs given by
\begin{equation}\label{v-pair-trans}
\lambda^*:=(x_1^*,x_2^*,y_1^*,y_2^*;\gamma_1^*,\gamma_2^*;\Omega^*,U^*)=\Big(d,-d,0,0;-\gamma,\gamma;0,\frac{\gamma \widehat{C}_\alpha}{2^{\alpha+2}d^{\alpha+1}}\Big),
\end{equation}
instead steadily translate along the $y$-axis; see Figure~\ref{fig:pairs}(b). We also consider asymmetric stationary tripoles of the form
\begin{equation}\label{stationary-tripole}
  \begin{aligned}
    \lambda^*&:=(x_1^*,x_2^*,x_3^*,y_1^*,y_2^*,y_3^*;\gamma_1^*,\gamma_2^*,\gamma_3^*;\Omega^*,U^*)\\ &=\Big(1,0,-\mathtt{a},0,0,0;\gamma ,-\gamma\big(\tfrac{\mathtt{a}}{\mathtt{a}+1}\big)^{\alpha+1},\gamma \mathtt{a}^{\alpha+1};0,0\Big),
  \end{aligned}
\end{equation}
where $\mathtt{a} \in (0,1)$; see Figure~\ref{fig:pairs}(c). Note that all of these above configurations are invariant under reflections about the $x$-axis. Furthermore, horizontal translations of \eqref{v-pair-trans} and rotations of \eqref{v-pair-rot} are also solutions.
\begin{figure}
  \centering
  \includegraphics[scale=1.1]{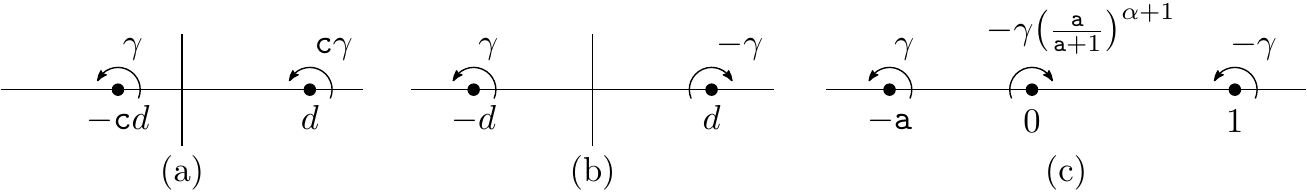}
  \caption{(a) An asymmetric co-rotating vortex pair given by \eqref{v-pair-rot}. (b) A symmetric counter-rotating vortex pair given by \eqref{v-pair-trans}. (c) A asymmetric point vortex tripole given by \eqref{stationary-tripole}.
  \label{fig:pairs}
  }
\end{figure}

As we shall see in Section \ref{sec:asym}, these configurations are non-degenerate in the sense of Definition~\ref{def:non-deg}. Using Theorem~\ref{thm:general}, they can therefore be desingularized into steady vortex patch equilibria. As no two vortices in \eqref{v-pair-rot} or \eqref{stationary-tripole} can be identified with one another, the same is true of the corresponding vortex patches. While the two vortices in \eqref{v-pair-trans} \emph{can} be identified, this symmetry is broken if we require the leading-order ratio between the sizes of the patches to be different from $1$. For the pairs, this extends the desingularization result of \cite{HH2}, obtained in the Eulerian case $\alpha=0$, to gSQG equations \eqref{eqn:omega} with $\alpha\in (0,1)$. To the best of our knowledge, the asymmetric tripole patch solutions are new both for the Euler and gSQG equations. Furthermore, this appears to be the first existence proof for stationary solutions to the gSQG equations involving multiple patches; see \cite{Gomez-Jaemin-Jia2021} for stationary solutions to the Euler equations with multiple multi-layered patches and \cite{Gom} for stationary doubly connected solutions to the gSQG equations.

\begin{theorem}\label{thm:informal-pair} 
  Let $\alpha\in[0,1)$ and $b_1,b_2,b_3\in (0,\infty)$, and let $\gamma,d,\mathtt{c},\mathtt{a}$ be as above.
  Then, the following results hold true.
  \begin{enumerate}[label=\rm(\roman*)]
 
  \item For any $\varepsilon>0$ sufficiently small, there are  two strictly convex domains $\mathcal{O}^\varepsilon_1,\mathcal{O}^\varepsilon_2$, $1$-fold symmetric,  $C^{1+\beta}$ perturbations of the unit disc,  and  real numbers $x_1(\varepsilon)=d+o(\varepsilon)$, $x_2(\varepsilon)=-\mathtt{c}d+o(\varepsilon)$ such that 
  \begin{equation*}
    \omega_{0}^\varepsilon=\frac{\gamma}{\varepsilon^2b_1^2}\chi_{\mathcal{D}_1^\varepsilon}+\frac{\mathtt{c}\gamma}{\varepsilon^2b_2^2}\chi_{\mathcal{D}_2^{\varepsilon}}\quad\textnormal{with}\quad  \mathcal{D}_1^\varepsilon:=\varepsilon b_1 \mathcal{O}_1^\varepsilon+x_1(\varepsilon),\quad  \mathcal{D}_2^{\varepsilon}:=\varepsilon b_2  \mathcal{O}_2^\varepsilon+x_2(\varepsilon),
  \end{equation*}
  generates a co-rotating vortex pair for \eqref{eqn:omega} with angular velocity $\Omega^*={\frac 12 \gamma \widehat{C}_\alpha d^{-\alpha-2}(1+\mathtt{c})^{-\alpha-1}}$.

  \item  For any  $\varepsilon>0$ sufficiently small, there are  two strictly convex domains $\mathcal{O}^\varepsilon_1,\mathcal{O}^\varepsilon_2$, $1$-fold symmetric,   $C^{1+\beta}$ perturbations of the unit disc, and  real numbers $x_2(\varepsilon)=-d+o(\varepsilon)$, $\gamma_1(\varepsilon)=-\gamma+o(\varepsilon)$ such that   \begin{equation*}
    \omega_{0}^\varepsilon=\frac{\gamma_1(\varepsilon)}{\varepsilon^2b_1^2}\chi_{\mathcal{D}_1^\varepsilon}+\frac{\gamma}{\varepsilon^2b_2^2}\chi_{\mathcal{D}_2^{\varepsilon}}\quad\textnormal{with}\quad  \mathcal{D}_1^\varepsilon:=\varepsilon b_1 \mathcal{O}_1^\varepsilon+d,\quad  \mathcal{D}_2^{\varepsilon}:=\varepsilon b_2  \mathcal{O}_2^\varepsilon+x_2(\varepsilon),
  \end{equation*}
    generates a counter-rotating vortex pair for \eqref{eqn:omega} with speed $U^*=\gamma \widehat{C}_\alpha 2^{-\alpha-2}d^{-\alpha-1}$.
  \item For any $\varepsilon>0$ sufficiently small, there are  three strictly convex domains $\mathcal{O}^\varepsilon_1,\mathcal{O}^\varepsilon_2,\mathcal{O}^\varepsilon_3$, $1$-fold symmetric,  $C^{1+\beta}$ perturbations of the unit disc,  and two  real numbers $x_3(\varepsilon)=-\mathtt{a}+o(\varepsilon)$, $\gamma_2(\varepsilon)=-\gamma\big(\tfrac{\mathtt{a}}{\mathtt{a}+1}\big)^{\alpha+1}+o(\varepsilon)$ such that 
  \begin{align*}
  &  \omega_{0}^\varepsilon=\frac{\gamma}{\varepsilon^2b_1^2}\chi_{\mathcal{D}_1^\varepsilon}+\frac{\gamma_2(\varepsilon)}{\varepsilon^2b_2^2}\chi_{\mathcal{D}_2^{\varepsilon}}+\frac{\gamma \mathtt{a}^{\alpha+1}}{\varepsilon^2b_3^2}\chi_{\mathcal{D}_3^\varepsilon}\\ & \textnormal{with}\quad  \mathcal{D}_1^\varepsilon:=\varepsilon b_1 \mathcal{O}_1^\varepsilon+1,\quad  \mathcal{D}_2^{\varepsilon}:=\varepsilon b_2  \mathcal{O}_2^\varepsilon,\quad \mathcal{D}_3^{\varepsilon}:=\varepsilon b_3  \mathcal{O}_3^\varepsilon+x_3(\varepsilon),
  \end{align*}
  generates a stationary vortex tripole for \eqref{eqn:omega}.

  \end{enumerate}
  \end{theorem}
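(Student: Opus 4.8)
The plan is to realize each of the three configurations as a non-degenerate point-vortex equilibrium and then invoke the general desingularization result, Theorem~\ref{thm:general} in the precise form of Theorems~\ref{prop:ift} and \ref{theorem-Phi-stationary}. For each of \eqref{v-pair-rot}, \eqref{v-pair-trans} and \eqref{stationary-tripole} two things must be checked: that $\lambda^*$ solves the algebraic system \eqref{alg-sysP}, and that it is non-degenerate in the sense of Definition~\ref{def:non-deg}.

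First I would verify directly that each $\lambda^*$ solves \eqref{alg-sysP}. Since in all three cases every vortex lies on the real axis, each difference $z_j-z_k$ is real and the interaction kernel collapses to $\frac{z_j-z_k}{|z_j-z_k|^{\alpha+2}}=\pm\,|z_j-z_k|^{-\alpha-1}$; the imaginary part of $\mathcal{P}^\alpha_j(\lambda^*)$ then vanishes automatically by reflection symmetry, while the real part reduces to an elementary algebraic identity satisfied by the stated $\Omega^*$, $U^*$ and $\gamma_j^*$. This is a short computation.

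The substantive step is the non-degeneracy verification, carried out in Section~\ref{sec:asym}. The guiding principle is to exploit the reflection symmetry $z\mapsto\bar z$ shared by all three configurations: the Jacobian $D_\lambda\mathcal{P}^\alpha(\lambda^*)$ decouples into a block acting on the on-axis data $(x_j,\gamma_j,\Omega,U)$ and the real components of $\mathcal{P}^\alpha$, and a block acting on the off-axis displacements $y_j$ and the imaginary components. Choosing the splitting $\lambda=(\lambda_1,\lambda_2)$ prescribed by Definition~\ref{def:non-deg}—with $\lambda_1\in\R^{2N-1}$ in the rotating and translating cases and $\lambda_1\in\R^{2N-3}$ in the stationary case—the non-degeneracy condition reduces to showing that $D_{\lambda_1}\mathcal{P}^\alpha(\lambda^*)$ has full column rank, so that its corank is automatically $1$ (resp.\ $3$). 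Thanks to the decoupling this comes down to the invertibility of a few small explicit matrices, one from each symmetry block, which I expect to be nonsingular precisely under the hypotheses $\gamma\neq0$, $d>0$, $0<|\mathtt{c}|<1$ and $\mathtt{a}\in(0,1)$; this is what pins down these parameter ranges.

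With non-degeneracy in hand, Theorems~\ref{prop:ift} and \ref{theorem-Phi-stationary} produce, for every sufficiently small $\varepsilon>0$ and every choice of $b_1,b_2,b_3\in(0,\infty)$, patches $\mathcal{D}_j^\varepsilon=\varepsilon b_j\mathcal{O}_j^\varepsilon+\text{center}$ forming a steady solution of \eqref{eqn:omega} of the asserted type—co-rotating with $\Omega^*$, translating with $U^*$, or stationary—whose centers and undetermined circulation depend smoothly on $\varepsilon$ and converge to their point-vortex values; this gives the expansions $x_j(\varepsilon)=\cdots+o(\varepsilon)$ and $\gamma_j(\varepsilon)=\cdots+o(\varepsilon)$. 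By Remark~\ref{rk:general} each $\mathcal{O}_j^\varepsilon$ is a $C^{1+\beta}$ (indeed $C^{n+\beta}$) perturbation of the unit disc; the $1$-fold symmetry is inherited from the configuration by running the whole construction in function spaces adapted to $z\mapsto\bar z$; and strict convexity follows because the leading-order correction of Remark~\ref{rk:general} is, for real-axis data, a real small-amplitude elliptical deformation, whose curvature stays uniformly positive once $\varepsilon$ is small, using the higher regularity of Remark~\ref{rk:general}. The main obstacle is the non-degeneracy computation of the third paragraph; the only additional subtlety is that the two vortices in \eqref{v-pair-trans} are interchangeable, a residual symmetry that is broken—yielding genuinely distinct patches—as soon as the size ratio $b_1/b_2$ is taken different from $1$.
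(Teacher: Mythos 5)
Your overall strategy is exactly the paper's: verify that \eqref{v-pair-rot}, \eqref{v-pair-trans}, \eqref{stationary-tripole} solve \eqref{alg-sysP}, verify non-degeneracy in the sense of Definition~\ref{def:non-deg} by exhibiting a choice of $\lambda_1$ for which $D_{\lambda_1}\mathcal{P}^\alpha(\lambda^*)$ has full column rank, and then invoke Theorems~\ref{prop:ift} and \ref{theorem-Phi-stationary}. Your block-decoupling observation (on-axis variables hit the real components, off-axis displacements hit the imaginary components) is precisely why the computations in Section~\ref{sec:asym} close, but be aware that the step you defer with ``I expect to be nonsingular'' is the actual content of that section, and that the \emph{choice} of $\lambda_1$ is not automatic: the paper takes $\lambda_1=(x_1,x_2,y_2)$ for the co-rotating pair (determinant $(\alpha+2)(1+\mathtt{c})^2$ after deleting a redundant row), $\lambda_1=(x_2,y_2,\gamma_1)$ for the counter-rotating pair (determinant $2\gamma d(\alpha+1)$), and $\lambda_1=(x_3,y_3,\gamma_2)$ for the tripole (rank $3$). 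In the last two cases a circulation must be among the freed parameters --- positions alone do not give full rank --- which is why the statement of the theorem has $\gamma_1(\varepsilon)$ and $\gamma_2(\varepsilon)$ varying; your framework accommodates this but does not pin it down.

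The one step that needs repair as written is the $1$-fold symmetry. You propose to run the construction in spaces adapted to $z\mapsto\bar z$, but Theorems~\ref{prop:ift} and \ref{theorem-Phi-stationary} are formulated in the non-symmetric spaces $\mathcal{V}_0^\alpha,\mathcal{W}_0^\alpha$, and the codimension counts $1$ and $3$ in Definition~\ref{def:non-deg} are tied to those spaces: in reflection-symmetric spaces the deficient subspace $\big\{w\mapsto\Im[c_1w]\big\}$ is only $N$-dimensional, so for instance the stationary tripole's corank drops from $3$ to $1$, and the whole linear theory (Proposition~\ref{proposition2}(iv), the use of Lemma~\ref{abstract-lemma} and of the identities of Lemma~\ref{coro:idG}) has to be redone in the smaller spaces --- this is the strategy of Section~\ref{sec:nest-poly} for the polygons, not something you obtain by citing the general theorems. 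The paper instead stays in the general spaces and gets symmetry a posteriori: it proves the equivariance identity $\mathcal{G}_j^\alpha(\varepsilon,f;\lambda_1,\lambda_2)(\overline{w})=-\mathcal{G}_j^\alpha(\varepsilon,\tilde f;\tilde\lambda_1,\lambda_2)(w)$ with $\tilde f_j(w)=\overline{f_j(\overline{w})}$ and $\tilde\lambda_1=(x_1,x_2,-y_2)$, so the reflected solution is again a solution, and the uniqueness clause of Theorem~\ref{prop:ift} forces $\overline{f_j(w)}=f_j(\overline{w})$ and $y_2=0$. Either mechanism works, but you must commit to one; mixing ``apply the general theorems'' with ``work in symmetric spaces'' leaves the symmetry claim unproven. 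Your convexity argument (curvature $1+O(\varepsilon|\varepsilon|^\alpha)$ after upgrading to $C^2$ regularity via the higher-regularity version of the scheme) matches the paper's.
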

\begin{remark}\label{rem:bpair}
  By sending some of $b_1,b_2,b_3$ to zero, we can recover configurations involving a mixture of vortex patches and point vortices; see Remark~\ref{rem:bift}.
\end{remark}
\begin{remark}
  The reflection symmetry property can be checked using the boundary equations, the uniqueness of the constructed curve of solutions and invariance under reflections about the $x$-axis of the point vortex configuration; see Section~\ref{sec:asym}.
\end{remark}

While the above examples are asymmetric and have only two or three vortices, there are other well-known point vortex equilibria which are highly symmetric and have many vortices. When seeking similarly symmetric desingularizations of such equilibria, it convenient to integrate these additional symmetries into the statement of the problem. In particular, the relevant non-degeneracy conditions on the point vortex equilibria can be much simpler to verify than Definition~\ref{def:non-deg}.

As a concrete example, we consider two concentric regular $m$-gons with a vortex at each vertex, and  assume that the vortices of a same polygon have the same  vorticity $\gamma_1 \in\R\setminus\{0\}$ or $\gamma_2 \in\R$. We place, in addition, a point vortex at the center of the regular $m$-gons with intensity $\gamma_0\in \R$.   
More specifically, we are concerned with  the system of point vortices
\begin{equation}\label{q00}
\omega_0^0(z)=\pi\gamma_0\delta_{0}(z)+\sum_{j=1}^{2}\pi\gamma_j\sum_{k=0}^{m-1}\delta_{z_{jk}(0)}(z),
\quad  z_{jk}(0):=
  \begin{cases}
    d_1e^{\frac{2k\pi i}{m}} &\, \textnormal{if }\, j=1, \\
    d_2e^{\frac{(2k+\vartheta)\pi i}{m}} &\, \textnormal{if }\, j=2,
    \, \vartheta\in\{0,1\}.
  \end{cases}
\end{equation}
 The value $\vartheta=0$ corresponds to the configuration where the vertices of the polygons are radially aligned with each other and  $\vartheta=1$ refers to the case where the vertices are out-of-phase by an angle $\pi/m$. If we assume that \eqref{q00} performs a perpetual  uniform rotation, then  
  we may easily check that the system of $2m+1$ equations in  \eqref{alg-sysP} can be reduced to a system of two real equations:
\begin{equation}\label{syst-pj2}
 \Omega d_j-\frac{\widehat{C}_\alpha}{2d_j^{1+\alpha}}\bigg({\gamma_0}+{\gamma_j}\sum_{k=1}^{m-1}\frac{1-e^{\frac{2k\pi i }{m}} }{\big|1 -e^{\frac{2k\pi i }{m}} \big|^{2+\alpha}}+\gamma_{3-j}\sum_{k=0}^{m-1}\frac{1 -e^{\frac{(2k+\vartheta)\pi i }{m}}\frac{d_{3-j}}{d_{j}} }{\big|1 -e^{\frac{(2k+\vartheta)\pi i }{m}}\frac{d_{3-j}}{d_{j}} \big|^{2+\alpha}}\bigg)=0, \, j=1,2.
\end{equation}
 Observe that the last system 
is linear in $\Omega$ and $\gamma_2$, and, thus,  under a simple non-degeneracy condition 
we can  explicitly solve the system \eqref{syst-pj2} for  $\Omega$ and $\gamma_2\neq 0$.

\begin{figure}
  \centering
  \includegraphics[scale=1.1]{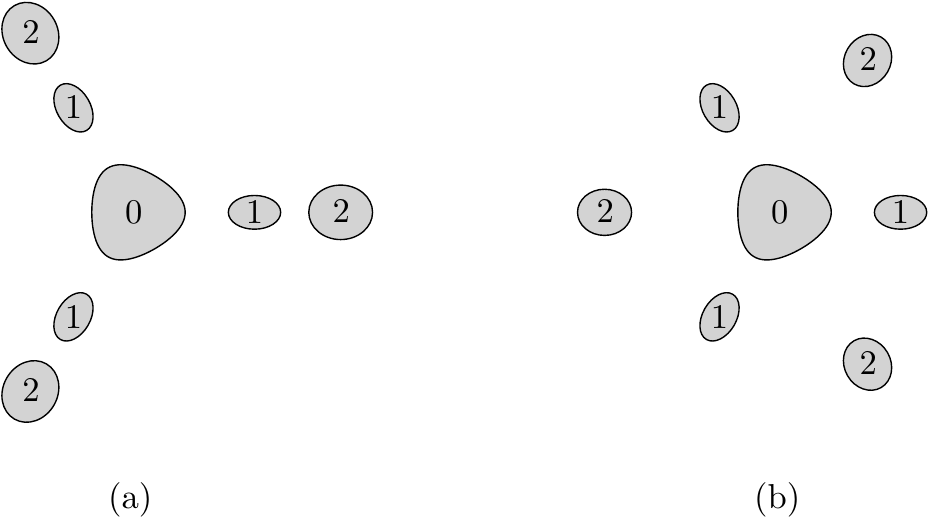}
  \caption{Sketch of the vortex patch solutions in Theorem~\ref{thm:informal-1polygon} with $m=3$. The point vortex of circulation $\pi\gamma_0$ at the origin has been desingularized into a patch with $m$-fold symmetry, while the vortices with circulations $\pi\gamma_1,\pi\gamma_2$ at the vertices of two regular $m$-gons have been desingularized into vortex patches with $1$-fold symmetry. (a) shows the case aligned case $\vartheta=0$, while (b) shows the staggered case $\vartheta=1$.
  \label{fig:poly}
  }
\end{figure}
For the sake of clarity we shall give an elementary statement about the desingularization of these configurations; for a complete statement see Theorem~\ref{thm:polygon}.
\begin{theorem}\label{thm:informal-1polygon} Let $\alpha \in [0,1)$, $\vartheta\in\{0,1\}$, $b_1,b_2\in (0,1)$, $\gamma_0,\gamma_1\in \R\setminus\{0\}$, $d_1,d_2\in  (0,\infty)$ and  let   $(\Omega^*,\gamma_2^*)\in (\R\setminus\{0\})^2$ be a solution of \eqref{syst-pj2} satisfying the non-degeneracy conditions \eqref{det-j-poly} and \eqref{non-deg}. 
 Then, for any $\varepsilon>0$ sufficiently small,  there are  three strictly convex domains $\mathcal{O}^\varepsilon_0, \mathcal{O}^\varepsilon_1, \mathcal{O}^\varepsilon_2$,   $C^{1+\beta}$ perturbations of the unit disc, and  a real number $\gamma_2=\gamma_2(\varepsilon)$  such that 
 \begin{equation}\label{theta0nest}
\omega_{0,\varepsilon}^\alpha=\frac{\gamma_0}{\varepsilon^2b_0^2}\chi_{ \varepsilon b_0 \mathcal{O}_0^\varepsilon}+\sum_{j=1}^{2}\frac{\gamma_j}{\varepsilon^2b_j^2}\sum_{k=0}^{m-1}\chi_{\mathcal{D}_{jk}^{\varepsilon}}
 \quad\textnormal{with} \quad \mathcal{D}_{jk}^\varepsilon :=
   \begin{cases}
e^{\frac{2k\pi i}{m}}(\varepsilon b_1  \mathcal{O}_2^\varepsilon+d_1) &\, \textnormal{if }\, j=1,   \\ e^{\frac{(2k+\vartheta)\pi i}{m}}(\varepsilon b_2  \mathcal{O}_2^\varepsilon+d_2)& \, \textnormal{if }\,  j=2,
   \end{cases}
\end{equation}
 generates a rotating solution for \eqref{eqn:omega} with some constant angular velocity $\Omega(\varepsilon)$.
Moreover $\mathcal{O}_0^\varepsilon$ is $m$-fold symmetric and $\mathcal{O}_1^\varepsilon$, $\mathcal{O}_2^\varepsilon$ are $1$-fold symmetric.
\end{theorem}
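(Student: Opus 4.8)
The plan is to re-run the desingularization scheme behind Theorem~\ref{thm:general} equivariantly with respect to the dihedral symmetry group of the configuration \eqref{q00}, generated by the rotation $z\mapsto e^{2\pi i/m}z$ and a reflection. Imposing this symmetry collapses the $2m+1$ point vortices to three independent objects---the central vortex and one representative vortex on each $m$-gon---and forces the desingularized vorticity to take the form \eqref{theta0nest}, in which $\mathcal{O}_0^\varepsilon$ is invariant under $z\mapsto e^{2\pi i/m}z$ and every outer patch $\mathcal{D}_{jk}^\varepsilon$ is a rotated copy of a single representative $\mathcal{O}_j^\varepsilon$. The decisive simplification is that the symmetry pins the central patch at the origin and the representative centres on the rays through $d_1,d_2$, so the radial distances remain fixed parameters rather than unknowns; consequently the only scalar unknowns are $\Omega$ and $\gamma_2$, and the full non-degeneracy hypothesis of Definition~\ref{def:non-deg} for $2m+1$ vortices is replaced by the two explicit conditions \eqref{det-j-poly} and \eqref{non-deg}.

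Next I would parametrize the three boundaries by conformal maps $\phi_j(\varepsilon,\cdot)\colon\mathbb{T}\to\partial\mathcal{O}_j^\varepsilon$ close to the identity and, following the proof of Theorems~\ref{prop:ift} and \ref{theorem-Phi-stationary}, recast the contour dynamics as a single functional equation $F(\varepsilon,\Omega,\gamma_2,\phi_0,\phi_1,\phi_2)=0$ on a product of H\"older spaces, with the Hmidi--Mateu rescaling built in so that $F$ extends smoothly across the point-vortex limit $\varepsilon=0$. The $m$-fold symmetry of the central patch is enforced by retaining in $\phi_0$ only Fourier modes compatible with invariance under $w\mapsto e^{2\pi i/m}w$, while the reflection symmetry of each patch about its own axis is enforced by taking real Fourier coefficients in a frame aligned with that axis; these constraints cut out closed subspaces that are invariant under $F$. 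At $\varepsilon=0$ the three disks satisfy the shape part of $F=0$ identically, and the remaining components reduce exactly to the two-equation relative-equilibrium system \eqref{syst-pj2}, whose prescribed solution $(\Omega^*,\gamma_2^*)$ serves as the base point.

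I would then linearize $F$ at $(0,\Omega^*,\gamma_2^*,\mathrm{id},\mathrm{id},\mathrm{id})$ and show the partial derivative in $(\Omega,\gamma_2,\phi_0,\phi_1,\phi_2)$ is an isomorphism of the symmetric spaces. This operator is block-triangular: because a shape deformation alters neither the net circulation nor the centre of a patch, it does not affect the radial force balance to leading order, so the $(\Omega,\gamma_2)$-equations decouple and their Jacobian is simply the $2\times 2$ matrix of \eqref{syst-pj2}, which---\eqref{syst-pj2} being linear in $(\Omega,\gamma_2)$---is invertible precisely under the determinant condition \eqref{det-j-poly}. Since the patches are infinitely separated relative to their size as $\varepsilon\to0$, the remaining shape block is block-diagonal at leading order, each diagonal entry being the linearized single-patch operator at the disk whose Fourier symbol is the one computed for gSQG V-states in \cite{HH,CCG}; its invertibility on the symmetric subspaces, from which the rigid-motion modes have been excluded by the $m$-fold constraint and by fixing the centres, is exactly the assertion \eqref{non-deg}. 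With both diagonal blocks invertible, the implicit function theorem produces a smooth branch $\varepsilon\mapsto(\Omega(\varepsilon),\gamma_2(\varepsilon),\phi_0,\phi_1,\phi_2)$, and the resulting patches inherit the $C^{1+\beta}$ regularity and Fourier asymptotics of Remark~\ref{rk:general}; strict convexity then follows from the higher-order smoothness and smallness recorded there.

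The main obstacle is the invertibility of the shape block on the symmetric subspaces, that is, showing that \eqref{non-deg} really forces the linearized contour-dynamics operator to have trivial kernel and cokernel there. This operator is a nonlocal singular integral operator built from the kernel $K_\alpha$, and the work lies in diagonalizing its leading self-interaction part in the Fourier basis, checking that the symbol does not vanish on the modes surviving the $m$-fold constraint for $\mathcal{O}_0$ and on the reflection-symmetric modes for $\mathcal{O}_1,\mathcal{O}_2$, and confirming that the $O(\varepsilon^{\alpha+2})$ cross-interaction terms coupling the distinct patches cannot spoil this invertibility uniformly as $\varepsilon\to0$. Translating these spectral requirements into the clean algebraic inequalities \eqref{det-j-poly} and \eqref{non-deg}, and verifying that the base configuration \eqref{q00} satisfies them, is the crux of the argument.
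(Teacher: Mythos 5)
Your proposal follows essentially the same route as the paper's proof of this result (Theorem~\ref{thm:polygon}): reduce by the rotation and reflection symmetries to three representative boundaries, work in symmetric spaces ($m$-fold Fourier modes for the central patch, real Fourier coefficients for the satellites), desingularize via the Hmidi--Mateu rescaling, observe that at $\varepsilon=0$ the equations collapse to \eqref{syst-pj2}, and conclude with the \emph{standard} implicit function theorem applied to a block-triangular linearization. This is exactly the simplification the paper emphasizes: in this symmetric setting the parameter derivative is onto, so Lemma~\ref{abstract-lemma} and the integral identities of Section~\ref{sec:Gidentities} are not needed.

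There is, however, a genuine error in what you identify as the crux. The paper's linearization at $(0,0;\lambda^*)$ is
\begin{equation*}
D_{(f;\lambda)}\mathcal{G}^\alpha(0,0;\lambda^*)\begin{pmatrix}\dot\Omega\\ \dot\gamma_2\\ h\end{pmatrix}(w)
= -\begin{pmatrix}0 & 0\\ d_1 & \tfrac{\widehat{C}_\alpha}{2}\tfrac{T_\alpha^+(d,\vartheta)}{d_1^{1+\alpha}}\\ d_2 & \tfrac{\widehat{C}_\alpha}{2}\tfrac{S_\alpha}{2d_2^{1+\alpha}}\end{pmatrix}
\begin{pmatrix}\dot\Omega\\ \dot\gamma_2\end{pmatrix}\Im\{w\}
+\sum_{n\geq 1}M_n^\alpha\begin{pmatrix}\gamma_0\, a_n^0\\ \gamma_1\, a_n^1\\ \gamma_2^*\, a_n^2\end{pmatrix}\Im\{w^{n+1}\}.
\end{equation*}
The parameter block acts only on the mode $\Im\{w\}$ and its invertibility is precisely \eqref{det-j-poly}; this part you have right. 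But the shape block is diagonal with entries $\gamma_j M_n^\alpha$, where the $M_n^\alpha$ in \eqref{lambdan} are the Fourier multipliers of the \emph{single-disk} linearized operator (Proposition~\ref{proposition2}(iii)--(iv), taken from \cite{Hmidi-Mateu}); their non-vanishing for all $n\geq 1$ --- including on the restricted $m$-fold modes --- is an unconditional spectral fact, valid for arbitrary nonzero circulations, and is neither equivalent to nor implied by \eqref{non-deg}. The only role of \eqref{non-deg} is to ensure that the circulation $\gamma_2^*$ obtained by solving the linear point-vortex system in \eqref{gamma0-om0} is itself nonzero, so that the third row $\gamma_2^* M_n^\alpha$ of the shape block does not degenerate. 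Hence your claims that invertibility of the shape block on the symmetric subspaces ``is exactly the assertion \eqref{non-deg}'', and that the main work is translating the spectral requirements into \eqref{det-j-poly} and \eqref{non-deg}, would not survive the computation: both inequalities are purely point-vortex conditions, while the spectral ingredient is a separate, already-established one. Relatedly, your worry about the $O(\varepsilon^{\alpha+2})$ cross-interaction terms spoiling invertibility as $\varepsilon\to 0$ is moot: the implicit function theorem is applied at $\varepsilon=0$, where the patch-patch interactions are constant in $f$ and contribute only to the $\Im\{w\}$ component, and the $C^1$ dependence of $\mathcal{G}^\alpha$ on $\varepsilon$ handles everything nearby.
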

See Figure~\ref{fig:poly} for an illustration.

\begin{remark}
  The complete statement of Theorem~\ref{thm:informal-1polygon} given in Theorem~\ref{thm:polygon} explicitly computes the asymptotic behavior of the conformal parametrizations $\phi_j^\varepsilon\colon\mathbb{T}\to \partial \mathcal{O}_j^\varepsilon$.
\end{remark}

\begin{remark}\label{rem:bpoly}
  The parameter $\varepsilon>0$ can be chosen uniformly as any of the parameters $b_0$, $b_1$ and $b_2$ tend to $0$, and therefore we may recover the point vortex-vortex patch configurations discussed in \cite{Crowdy, New-fam}; see Remark~\ref{rem:bift}.
\end{remark}

\begin{remark}
  The proof can be easily adapted to   the rotating vortex polygon (with $\gamma_0=\gamma_2=0$ in  \eqref{theta0nest})  and which has been studied in \cite{ADDMW,GGS, T,G}. This remains equally true for  the body-centered  polygonal configurations  ($\gamma_2=0$), treated in \cite{Wan}, as well as for  the nested polygons without a central patch  ($\gamma_0= 0$). The latter solutions were first observed numerically in \cite{New-fam}.  
\end{remark}

\subsection{Idea of the proof}
We shall briefly explain the basic  ideas behind Theorem~\ref{thm:general} for the Euler equations in the rotating case; a similar strategy is followed for the gSQG equations and for traveling or stationary patches. We seek simply connected bounded domains $\mathcal{O}_j^\varepsilon$ such that the initial datum  
\begin{equation*}
\omega_{0}^\varepsilon(z)=\sum_{j=1}^N\frac{\gamma_j}{\varepsilon^2b _j^2}\chi_{\mathcal{D}_j^\varepsilon}(z) \quad\textnormal{with}\quad  \mathcal{D}_{j}^\varepsilon := \varepsilon  b_j\mathcal{O}_j^\varepsilon+z_j
\end{equation*}
performs a uniform rotation around the center of mass of the system, taken to be the origin, with an angular velocity $\Omega$. Here the parameters $b_j$ will allow us to specify the relative sizes of the patches; see Remark~\ref{rem:b}.
After moving to the rotating  frame, the boundaries of the system are subject to the following stationary system, see for instance \cite[Page 1896]{DHMV}, 
\begin{equation}\label{eq:rot}
\Re\big\{\gamma_{j}\big(\overline{v^\varepsilon(z)}+i\Omega\, \overline{z}\big)\vec{n}\big\}=0\quad \textnormal{for all}\quad z\in\displaystyle\partial \mathcal{D}_j^\varepsilon, 
\end{equation}
where $\vec{n}$ is the exterior unit normal vector to the boundary at the point $z$. In virtue of the Biot--Savart law and  Green’s theorem, we may write
\begin{align*}
\overline{v^\varepsilon(z)} &=\frac{1}{4\pi}\sum_{k=1}^N\frac{\gamma_k}{\varepsilon^2 b_k^2}\int_{\partial \mathcal{D}_k^\varepsilon}\frac{\overline{\xi}-\overline{z}}{\xi-z}d\xi
\end{align*}
for all $z\in \mathbb{C}$. 

Following Hmidi and Mateu \cite{Hmidi-Mateu}, we reformulate \eqref{eq:rot}  in terms of the conformal parametrizations of the boundaries  $\phi_j\colon\mathbb{T}\to \partial \mathcal{O}_j^\varepsilon$, which we assume have the form
$$
\phi_j(w)= w+\varepsilon b_j f_j(w).
$$
In other words, we shall look for domains $\mathcal{O}_j^\varepsilon$  which are small perturbations of the unit disc with an amplitude of order $\varepsilon b_j$. While the resulting problem initially appears to have terms which are singular in $\varepsilon$, as in \cite{Hmidi-Mateu} there is a cancellation --- essentially due to the symmetry of the disk --- which eliminates these terms. This leads to a nonlinear system
\begin{equation*}
\mathcal{G}_j(\varepsilon,f;\lambda)=0, \quad   j=1,\ldots,N
\end{equation*}
for the perturbations $f:=(f_1,\ldots,f_N)$ of the patch boundaries, where the nonlinear operator $\mathcal{G}=(\mathcal{G}_1,\ldots,\mathcal{G}_N)\colon(-\varepsilon_0,\varepsilon_0)\times B_0\times \Lambda \to \mathcal{W}_0$ is well-defined and of class $C^1$. 
Here $\Lambda$  in a small neighborhood of $\lambda^*$, which is the solution to the point vortex system \eqref{alg-sysP} with $\alpha=0$,  $B_0$ is the unit ball in $\mathcal{V}_0$ and 
\begin{equation*}
\begin{split}
&\mathcal{V}_0 :=\underbrace{V_0\times\cdots \times V_0}_{\text{$N$ times}}, \quad
\mathcal{W}_0 :=\underbrace{W_0\times\cdots \times W_0}_{\text{$N$ times}}, \\
&V_0:=
        \Big\{f\in  C^{1+\beta}(\mathbb{T}): f(w)=\sum_{n\geq 1}
{a_n}\,\overline{w}^n,\;   a_n\in \mathbb{C}\Big\},\\
 &  W_0:=
        \Big\{g\in  C^{\beta}(\mathbb{T}): g(w)= \sum_{n\neq 0}
c_n\,{w}^n, \; c_{-n}=\overline{c_n}\in \mathbb{C}\Big\}
\end{split}
\end{equation*}
for some fixed H\"older exponent $\beta \in (0,1)$.
 Moreover, for $(\varepsilon,f;\lambda)=(0,0;\lambda)$ we find
\begin{equation} \label{eq:pvconnection}
  \mathcal{G}_j(0,0;\lambda)(w)=\Im \big\{ \mathcal P_j^0(\lambda) \overline w \big\},
\end{equation}
so that $\mathcal G(0,0;\lambda) = 0$ is equivalent to \eqref{alg-sysP}.

Intending to apply the implicit function theorem, we next linearize $\mathcal{G}$ about the point vortex solution $(0,0;\lambda^*)$. The linearized operator with respect to the patch boundaries $f$ is 
\begin{equation*}
  D_{f}\mathcal{G}_j(0,0;\lambda^*)h(w)=
  {\gamma_j}\, \Im \big\{h'_j(w)\big\},
\end{equation*}
which has a trivial kernel and a range with finite codimension $2N$. To deal with this deficiency, we also linearize with respect to the point vortex parameters $\lambda$ using \eqref{eq:pvconnection}. Assuming that $\lambda^*$ is non-degenerate in the sense of Definition~\ref{def:non-deg}, we deduce that the linearized operator $D_{(f;\lambda_1)}\mathcal G(0,0;\lambda^*)$ has trivial kernel and a range with codimension 1. As this final deficiency is caused by a nonlinear identity 
\begin{align*}
  \sum_{j=1}^N\frac{\gamma_j}{i\pi}\int_{\mathbb{T}}\mathcal{G}^\alpha_j(\varepsilon,f;\lambda)(w) \Big(\varepsilon b_j|w+\varepsilon b_j f_j(w)|^2+\varepsilon b_j\Re\big[\overline{z_j}f_j(w)\big]+\Re[\overline{z_j}w]\Big)\overline{w}dw=0
\end{align*}
satisfied by the functional $\mathcal G$, we can conclude by applying a modified version of the implicit function theorem (Lemma~\ref{abstract-lemma}).

\subsection{Notation}
Let us end this part by summarizing some notation to be used in the paper. We will denote the unit disc by $\mathbb{D}$ and its boundary by $\mathbb{T}$. For continuous functions $f\colon\mathbb{T}\to \mathbb{C}$ we introduce the notation
$$
\fint_{\mathbb{T}} f(\tau)\, d\tau:=\frac{1}{2\pi i}\int_{\mathbb{T}} f(\tau)\, d\tau,
$$
where $d\tau$ stands for complex integration. For any $x\in \mathbb{R}$ and $n\in\mathbb{N}$, we use the notation $(x)_n$ to denote the Pochhammer symbol defined by
$$
(x)_n:=
\begin{cases}
  1 &\, \textnormal{if }\, n=0,        \\
  x(x+1)\cdots (x+n-1)  &\, \textnormal{if }\, n\geq 1.
\end{cases}
$$
 Finally,  we use the notation $\delta_{ij}$ to denote the Kronecker delta  defined by
$$
\delta_{ij}:= 
\begin{cases}
  1 &\, \textnormal{if }\, i=j,        \\
  0 &\, \textnormal{if }\, i\neq j.
\end{cases}
$$

\subsection{Outline of the paper} 
In Section~\ref{sec:general} we consider completely general vortex equilibria, without any symmetry assumptions, show how the problem is desingularized and prove Theorem~\ref{thm:general}.  Section~\ref{sec:asym} is devoted to the proof of  Theorem~\ref{thm:informal-pair}. Finally, in Section~\ref{sec:nest-poly}, we prove Theorem~\ref{thm:informal-1polygon} by imposing suitable rotation and reflection symmetries.

\section{Desingularization of  general vortex equilibria}\label{sec:general}
In this section we consider a general configuration of finitely many point vortices in uniform rotation or translation, and show how these vortices can be desingularized into small vortex patches. Using the approach developed in \cite{Hmidi-Mateu}, we first write down the  contour dynamics  equations governing  the $N$ steady vortex patches, and then find the suitable function spaces where the problem is well-posed. Finally, we prove Theorem~\ref{thm:general} using a simple extension of the implicit function theorem.

\smallskip

Consider $N$  bounded simply connected domains  $\mathcal{O}_j^\varepsilon,$ $j=1,\ldots,N$,   containing  the origin and contained in the ball $B(0,2)$. Given $b_j\in (0,\infty)$,  $z_j\in \mathbb{C}$ and  $\varepsilon\in(0,\varepsilon_0)$, we define the domains
\begin{equation}\label{Dj}
\displaystyle \mathcal{D}_{j}^\varepsilon:= \varepsilon   b_j\mathcal{O}_j^\varepsilon+z_j,
\end{equation}
where $\varepsilon_0 > 0$ is chosen small enough that the sets $\mathcal{D}_{j}^\varepsilon$ are pairwise disjoint,
\begin{equation}\label{intersection}
\overline{\mathcal{D}_{j}^\varepsilon}\cap \overline{\mathcal{D}_{k}^\varepsilon}=\emptyset , \quad j\neq k.
\end{equation}
Consider the initial vorticity  
\begin{equation}\label{intial-vort}
\omega_{0}^\varepsilon(z)=\frac{1}{\varepsilon^2}\sum_{j=1}^N\frac{\gamma_j}{ b_j^2}\chi_{\mathcal{D}_j^\varepsilon}(z).
\end{equation}
Note that, if $\varepsilon\to 0$ and $|\mathcal{O}_j^\varepsilon|\to|\mathbb{D}|$ in \eqref{intial-vort}, we find the point vortex distribution 
\begin{equation*}
\omega_{0}^0(z)=\pi\sum_{j=1}^N{\gamma_j}\delta_{z_j}(z), 
\end{equation*}
whose evolution is described by \eqref{ode-sys0}. 

\subsection{Integral identities for the stream function} \label{sec:identities}
We shall give in this subsection some identities related to the (non-relative) stream function,
\begin{equation}\label{exp psi}
  \forall z\in \mathbb{C}\mapsto\psi^\varepsilon(z)=\frac{1}{\varepsilon^2}\sum_{k=1}^N\frac{\gamma_k}{b_k^2}\int_{\mathcal{D}_k^\varepsilon}K_\alpha(z-\xi)dA(\xi),
\end{equation}
associated to the vortex patch \eqref{intial-vort}, where $K_\alpha$ is defined in \eqref{eqn:kalpha}.  
As we shall see in the following subsections, these identities will be useful to explain the degeneracy  of the functional defining the V-states. While they can be derived using the symmetries and variational structure of the problem, we shall give here a simple proof using the structure of the kernel $K_\alpha$. 
\begin{lemma}\label{lem:idens}
  For all $\varepsilon \in (0,\varepsilon_0)$, the stream function 
  \eqref{exp psi} satisfies the following identities:
  \begin{enumerate}[label=\rm(\roman*)]
  \item $\displaystyle\sum_{j=1}^N\frac{\gamma_j}{\varepsilon^2 b_j^2 }\int_{\partial \mathcal{D}_j^\varepsilon}\psi^\varepsilon(z)dz=0$,
  \item $\displaystyle\Re\Big[\sum_{j=1}^N\frac{\gamma_j}{\varepsilon^2 b_j^2 }\int_{\partial \mathcal{D}_j^\varepsilon}\overline{z}\psi^\varepsilon(z)dz\Big]=0$.
  \end{enumerate}
\end{lemma}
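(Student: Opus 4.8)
The plan is to convert the boundary integrals into area integrals weighted by the vorticity, and then exploit two elementary structural properties of the radial kernel $K_\alpha$: that $\partial_{\bar z}K_\alpha$ is \emph{odd}, and that $\overline{u}\,\partial_{\bar u}K_\alpha(u)$ is \emph{real-valued}. The first property will kill the integral in (i) outright, and the second will handle the nontrivial part of (ii).

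First I would record the complex form of Green's theorem, namely $\int_{\partial D}F\,dz = 2i\iint_D \partial_{\bar z}F\,dA$ for $F\in C^1(\overline D)$ on a positively oriented $C^1$ domain. Since $K_\alpha\sim |x|^{-\alpha}$ (or $\log|x|$) has gradient $\sim |x|^{-\alpha-1}$, which is locally integrable in the plane precisely because $\alpha\in[0,1)$, the potential $\psi^\varepsilon=K_\alpha*\omega_0^\varepsilon$ is $C^1$ and $\partial_{\bar z}\psi^\varepsilon(z)=\iint \partial_{\bar z}K_\alpha(z-\xi)\,\omega_0^\varepsilon(\xi)\,dA(\xi)$ by differentiation under the integral. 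Applying the Green identity to $F=\psi^\varepsilon$ and to $F=\bar z\,\psi^\varepsilon$ on each patch $\mathcal D_j^\varepsilon$ and summing against the weights $\gamma_j/(\varepsilon^2 b_j^2)$ collapses the sum of patch integrals into a single integral against $\omega_0^\varepsilon=\varepsilon^{-2}\sum_j(\gamma_j/b_j^2)\chi_{\mathcal D_j^\varepsilon}$ over the plane.

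For (i) this yields $\sum_j \tfrac{\gamma_j}{\varepsilon^2 b_j^2}\int_{\partial\mathcal D_j^\varepsilon}\psi^\varepsilon\,dz = 2i\iint\!\!\iint \omega_0^\varepsilon(z)\,G(z-\xi)\,\omega_0^\varepsilon(\xi)\,dA(\xi)\,dA(z)$, where $G:=\partial_{\bar z}K_\alpha$. As $K_\alpha$ is even, $G$ is odd, so relabeling $z\leftrightarrow\xi$ in this double integral (absolutely convergent, since $|G(z-\xi)|\lesssim|z-\xi|^{-1-\alpha}$ is locally integrable for $\alpha<1$) shows it equals its own negative and hence vanishes. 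For (ii), $\partial_{\bar z}(\bar z\,\psi^\varepsilon)=\psi^\varepsilon+\bar z\,\partial_{\bar z}\psi^\varepsilon$. The $\psi^\varepsilon$ term gives $2i\iint\!\!\iint\omega_0^\varepsilon(z)\,K_\alpha(z-\xi)\,\omega_0^\varepsilon(\xi)$, which is $2i$ times a real number and thus has zero real part. For the remaining term I symmetrize: swapping $z\leftrightarrow\xi$ and using $G(\xi-z)=-G(z-\xi)$ rewrites $\iint\!\!\iint\omega_0^\varepsilon(z)\,\bar z\,G(z-\xi)\,\omega_0^\varepsilon(\xi)$ as $\tfrac12\iint\!\!\iint\omega_0^\varepsilon(z)\,\omega_0^\varepsilon(\xi)\,(\bar z-\bar\xi)\,G(z-\xi)$. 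Writing $K_\alpha(u)=k(|u|^2)$ gives $\partial_{\bar u}K_\alpha(u)=k'(|u|^2)\,u$, so the combined kernel $(\bar z-\bar\xi)\,G(z-\xi)=\overline{(z-\xi)}\,\partial_{\bar u}K_\alpha(z-\xi)=|z-\xi|^2\,k'(|z-\xi|^2)$ is real; hence this double integral is real and its contribution has vanishing real part, giving (ii).

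The only points needing care are the regularity justifying Green's theorem and differentiation under the integral sign, both guaranteed by the local integrability of $\nabla K_\alpha$ for $\alpha<1$, and the absolute convergence of the double integrals, which legitimizes the Fubini/relabeling steps with no principal-value correction. I expect the main (minor) obstacle to be cleanly bookkeeping the two parities of $K_\alpha$: the oddness of $\partial_{\bar z}K_\alpha$ drives (i), while the reality of $\overline{u}\,\partial_{\bar u}K_\alpha(u)$ drives the genuinely nontrivial piece of (ii).
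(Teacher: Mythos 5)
Your proof is correct, and the underlying mechanism is the same as the paper's: the complex Green's theorem combined with a $z\leftrightarrow\xi$ symmetrization exploiting the parity and reality of the radial kernel. Your part (ii) essentially coincides with the paper's argument — the paper works with $\partial_z\psi^\varepsilon$ and the kernel $\widetilde K_\alpha=\partial_z K_\alpha$, you with the conjugate derivative, and in both cases the symmetrized kernel ($\tfrac{1}{4\pi}$ for $\alpha=0$, $\tfrac{\alpha C_\alpha}{4\pi}|z-\xi|^{-\alpha}$ for $\alpha\in(0,1)$) is real, which kills the relevant term. The one genuine divergence is in part (i), where you apply Green's theorem in the opposite direction: the paper rewrites $\psi^\varepsilon$ itself as a \emph{boundary} integral of an antiderivative $\widehat K_\alpha$ (chosen so that $\partial_{\bar\xi}\widehat K_\alpha=K_\alpha$), producing a double boundary integral of an odd kernel, whereas you convert the outer boundary integral into an \emph{area} integral and differentiate under the integral sign, producing a double area integral of the odd kernel $\partial_{\bar z}K_\alpha$. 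The trade-off is in the singularity bookkeeping: the paper's kernel $\widehat K_\alpha\sim|z-\xi|^{1-\alpha}$ is continuous across the diagonal, so no integrability question even arises, while your kernel is of order $|z-\xi|^{-1-\alpha}$ and requires exactly the local integrability (valid since $\alpha<1$) and absolute convergence that you correctly flag as the justification for the Fubini/relabeling step. Both routes are sound; yours is marginally more demanding on the kernel but avoids introducing the auxiliary antiderivative.
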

\begin{proof}
Applying the complex version of Green's theorem, 
\begin{equation}\label{Stokes Thm}
	2i\int_{D}\partial_{\overline{\xi}}f(\xi,\overline{\xi})dA(\xi)=\int_{\partial D}f(\xi,\overline{\xi})d\xi, 
\end{equation}
to \eqref{exp psi} we find
$$
\psi^\varepsilon(z)=\frac{1}{2i}\sum_{k=1}^N\frac{\gamma_k}{\varepsilon^2 b_k^2}\int_{\partial \mathcal{D}_k^\varepsilon} \widehat K_\alpha(z-\xi)d\xi\quad{\rm with}\quad  \widehat K_\alpha(\xi)=\begin{cases}
    \displaystyle \frac{1}{2\pi}\overline{\xi}\big(\ln|\xi|-\tfrac12\big)& \text{if }\alpha=0, \\[2ex]
    \displaystyle-\frac{C_\alpha}{2\pi(1-\frac\alpha2)}\frac{\overline{\xi}}{|\xi|^\alpha} & \text{if }\alpha\in(0,1). 
  \end{cases}
$$
It follows that
$$
\sum_{j=1}^N\frac{\gamma_j}{\varepsilon^2 b_j^2 }\int_{\partial \mathcal{D}_j^\varepsilon}\psi^\varepsilon(z)dz=\frac{1}{2i}\sum_{k=1}^N\sum_{j=1}^N \frac{\gamma_k\gamma_j}{\varepsilon^4 b_j^2b_k^2 }\int_{\partial \mathcal{D}_k^\varepsilon}\int_{\partial \mathcal{D}_j^\varepsilon} \widehat  K_\alpha(z-\xi)d\xi dz.
$$
The integrand $\widehat  K_\alpha(z-\xi)$ changes sign when the roles of $z$ and $\xi$ are reversed, and so {\rm(i)} follows. 
 
Next we apply formula \eqref{Stokes Thm} to $\bar z \psi^\varepsilon$, which yields
\begin{equation*}
\int_{\partial \mathcal{D}_j^\varepsilon}\overline{z}\psi^\varepsilon(z)dz =2i\int_{ \mathcal{D}_j^\varepsilon}\psi^\varepsilon(z)dA(z)+2i\int_{ \mathcal{D}_j^\varepsilon}\overline{z}\partial_{\overline z}\psi^\varepsilon(z)dA(z).
\end{equation*}
Since $\psi^\varepsilon(z)$ is real, the last identity implies
\begin{equation}\label{id0}
\Re\Big[\sum_{j=1}^N\frac{\gamma_j}{\varepsilon^2 b_j^2 }\int_{\partial \mathcal{D}_j^\varepsilon}\overline{z}\psi^\varepsilon(z)dz\Big]=2\Im\Big[\sum_{j=1}^N\frac{\gamma_j}{\varepsilon^2 b_j^2 }\int_{ \mathcal{D}_j^\varepsilon}z\partial_z\psi^\varepsilon(z)dA(z)\Big].
\end{equation}
Differentiating  \eqref{exp psi} with respect to $z$ we get 
\begin{align*}
\partial_z\psi^\varepsilon(z)&=\sum_{k=1}^N \frac{\gamma_k}{\varepsilon^2 b_k^2 }\int_{\mathcal{D}_k^\varepsilon}\widetilde K_\alpha(z-\xi)dA(z)\quad{\rm with}\quad  \widetilde K_\alpha(\xi)=\begin{cases}
    \displaystyle \frac{1}{4\pi}\frac{1}{\xi}& \text{if }\alpha=0, \\[2ex]
    \displaystyle\frac{\alpha C_\alpha}{4\pi}\frac{\overline{\xi}}{|\xi|^{\alpha+2}} & \text{if }\alpha\in(0,1). 
  \end{cases}
\end{align*}
Inserting this into \eqref{id0} and interchanging the roles of $\xi$ and  $z$ we obtain
\begin{align*}
  \Re\Big[\sum_{j=1}^N\frac{\gamma_j}{\varepsilon^2 b_j^2 }\int_{\partial \mathcal{D}_j^\varepsilon}\overline{z}\psi^\varepsilon(z)dz\Big]&=2\Im\bigg[\sum_{k=1}^N\sum_{j=1}^N \frac{\gamma_k\gamma_j}{\varepsilon^4 b_j^2b_k^2 }\int_{\mathcal{D}_k^\varepsilon}\int_{ \mathcal{D}_j^\varepsilon}z\widetilde K_\alpha(z-\xi)dA(\xi) dA(z)\bigg]\\ &=\Im\bigg[\sum_{k=1}^N\sum_{j=1}^N \frac{\gamma_k\gamma_j}{\varepsilon^4 b_j^2b_k^2 }\int_{\mathcal{D}_k^\varepsilon}\int_{ \mathcal{D}_j^\varepsilon}\big(z\widetilde K_\alpha(z-\xi)+\xi  \widetilde K_\alpha(\xi-z)\big)dA(\xi) dA(z)\bigg].
\end{align*}
The desired identity {\rm(ii)} now follows from the fact that the integrand
$$
z  \widetilde K_\alpha(z-\xi)+\xi  \widetilde K_\alpha(\xi-z)=\begin{cases}
    \displaystyle \frac{1}{4\pi}& \text{if }\alpha=0, \\[2ex]
    \displaystyle\frac{\alpha C_\alpha}{4\pi}\frac{1}{|z-\xi|^{\alpha}} & \text{if }\alpha\in(0,1)
  \end{cases}
$$
is purely real, and the proof is complete.
\end{proof}

\subsection{Boundary equations}\label{sec:formulation}

First suppose that $\omega_{0}^\varepsilon$ gives rise to  $N$ rotating  patches  for the model \eqref{eqn:omega}  about the centroid of the system, assumed to be the origin, 
 with an angular velocity $\Omega$.  More precisely, we are looking for a  solution $\omega^\varepsilon(t)$ of \eqref{eqn:omega} of  the form
$$
\omega^\varepsilon(t,z)=\omega_{0}^\varepsilon\big(e^{-it\Omega}z\big).
$$
Inserting this expression into \eqref{eqn:omega} we obtain
$$
\big(v^\varepsilon(z)-i\Omega z\big)\cdot \nabla \omega_{0}^\varepsilon(z)=0,
$$
with $v^\varepsilon$ is the velocity field associated to $\omega_{0}^\varepsilon$. Using the patch structure, we conclude that \cite[Page 1896]{DHMV}
\begin{equation}
 \label{eqn:boundry00-rot}
\Re\big\{\gamma_{j}\big(\overline{v^\varepsilon(z)}+i\Omega\, \overline{z}\big)\vec{n}\big\}=0\quad \textnormal{for all}\quad z\in\displaystyle\partial \mathcal{D}_j^\varepsilon, \quad j=1,\ldots,N,
\end{equation}
where $\vec{n}$ is the exterior unit normal vector to the boundary at the point $z$.

If instead $\omega_0^\varepsilon$ translates vertically with uniform velocity $U$, that is
$$
\omega^\varepsilon(t,z)=\omega_{0}^\varepsilon\big(z-itU\big),
$$
the analogue of  \eqref{eqn:boundry00-rot} is 
\begin{equation}\label{eqn:boundry00-trans}
\Re\big\{\gamma_{j}\big(\overline{v^\varepsilon(z)}+iU\big)\vec{n}\big\}=0\quad \textnormal{for all}\quad z\in\displaystyle\partial \mathcal{D}_j^\varepsilon, \quad j=1,\ldots, N. 
\end{equation}
For the sake of  abbreviation and simplicity we shall unify \eqref{eqn:boundry00-rot} and \eqref{eqn:boundry00-trans} as follows 
\begin{equation}
 \label{eqn:boundry00}
\Re\big\{\gamma_{j}\big(\overline{v^\varepsilon(z)}+iU+i\Omega\, \overline{z}\big)\vec{n}\big\}=0\quad \textnormal{for all}\quad z\in\displaystyle\partial \mathcal{D}_j^\varepsilon, \quad j=1,\ldots, N, 
\end{equation}
and assume that either  $\Omega$ or $U$ vanishes.
  %%%%%%%%%%%%%%%%%%%%%%%%%%%%%%%%%%%%%%%%%%%%%%%%%%
 \subsubsection{Euler equation}  \label{subsec:euler}
 %%%%%%%%%%%%%%%%%%%%%%%%%%%%%%%%%%%%%%%%%%%%%%%%%%
 In view of the Biot--Savart law one has
\begin{equation}\label{Biot-savart}
  \overline{v^\varepsilon(z)} =-2i\partial_z \psi^\varepsilon(z) = -\frac{i}{2\pi}\sum_{k=1}^N\frac{\gamma_k}{\varepsilon^2  b_k^2}\int_{\mathcal{D}_k^\varepsilon}\frac{dA(\zeta)}{z-\zeta}
\end{equation}
for all $z\in\mathbb{C}$. By the complex form of Green's theorem \eqref{Stokes Thm},
we may replace the integral over $\mathcal{D}_k$ in \eqref{Biot-savart} with an integral along $\partial \mathcal{D}_k$:
\begin{align*}
\overline{v^\varepsilon(z)} &=\frac{i}{2}\sum_{k=1}^N\frac{\gamma_k}{\varepsilon^2  b_k^2}\fint_{\partial \mathcal{D}_k^\varepsilon}\frac{\overline{\xi}-\overline{z}}{\xi-z}d\xi.
\end{align*}
Inserting the  last identity  into   \eqref{eqn:boundry00} leads to
\begin{align}\label{eq2E0}
& \gamma_j\, \Re\big\{\big(\Omega\,\overline{z}+U+V^\varepsilon(z)\big)z'\big\}=0,\quad \forall z\in \partial \mathcal{D}_j^\varepsilon, \quad j=1,\ldots, N,
\end{align}
where  $z'$ denotes a tangent vector to the boundary  at the point $z$ and
$$
V^\varepsilon(z):=\sum_{k=1}^N\frac{\gamma_k}{2\varepsilon^2  b_k^2}\fint_{\partial \mathcal{D}_k^\varepsilon}\frac{\overline{\xi}-\overline{z}}{\xi-z}d\xi.
$$  
 In view of \eqref{Dj}, a suitable change of variables gives
\begin{align*}
V^\varepsilon(z)=\sum_{k=1}^N\frac{\gamma_k}{2\varepsilon  b_k}\fint_{\partial \mathcal{O}_k^\varepsilon}\frac{\varepsilon  b_k\overline{\xi}+{z_k}-\overline{z}}{\varepsilon  b_k\xi+z_k-z}d\xi.
\end{align*} 
Observe, from \eqref{intersection}, that for any $j\neq k$ and $z\in \partial \mathcal{O}_j^\varepsilon$ one has  
$$\varepsilon  b_j z+z_j+z_k\not\in \varepsilon  b_k\overline{{\mathcal{O}}_k^\varepsilon}.
$$
Thus, by the residue theorem, for every  $z\in \partial \mathcal{O}_j^\varepsilon$, we may write
\begin{align*}
 V^\varepsilon\big(\varepsilon  b_j{z}+z_j\big)
&=\sum_{k=1}^N\frac{\gamma_k}{2\varepsilon  b_k }\fint_{\partial \mathcal{O}_k^\varepsilon}\frac{\varepsilon  b_k\overline{\xi}-{z_k}-\varepsilon  b_j\overline{z}-{z_j}}{\varepsilon  b_k\xi+z_k-\varepsilon  b_j{z}-{z_j}}d\xi\\
& =\frac{\gamma_j }{2\varepsilon  b_j }\fint_{\partial \mathcal{O}_j^\varepsilon}\frac{\overline{\xi}-\overline{z}}{\xi- z}d\xi+\sum_{\substack{k=1\\ k\neq j}}^{N}\frac{\gamma_k}{2}\fint_{\partial \mathcal{O}_k^\varepsilon}\frac{\overline{\xi}}{\varepsilon  b_k\xi+z_k-\varepsilon  b_j{z}-{z_j}}d\xi. 
\end{align*}
Replacing $z$ by $\varepsilon  b_j{z}+z_j$  in \eqref{eq2E0}
and using the  last identity we get
\begin{equation}\label{boundry D11}
\begin{split}
\gamma_j\,\Re\Big\{& \Big(\Omega \big(\varepsilon  b_j\overline{z}+\overline{z_j}\big)+U+\frac{\gamma_j }{2\varepsilon  b_j }\fint_{\partial \mathcal{O}_j^\varepsilon}\frac{\overline{\xi}-\overline{z}}{\xi- z}d\xi\\&+\sum_{\substack{k=1\\ k\neq j}}^{N}\frac{\gamma_k}{2}\fint_{\partial \mathcal{O}_k^\varepsilon}\frac{\overline{\xi}}{ \varepsilon  b_k\xi+z_k-\varepsilon  b_j{z}-{z_j}}d\xi \Big)z'\Big\}=0, \quad \forall z\in \partial \mathcal{O}_j^\varepsilon.
\end{split}
\end{equation}

We shall look  for  domains $\mathcal{O}_j^\varepsilon$,  which are perturbations of the unit disc with an amplitude of order $\varepsilon  b_j$. More precisely, we shall 
consider $\phi_j\colon\mathbb{C}\backslash \overline{\mathbb{D}}\to \mathbb{C}\backslash \overline{\mathcal{O}_j^\varepsilon}$  the unique conformal map    with the expansion 
\begin{align}\label{conf0}
  \phi_j (w)= w+\varepsilon  b_j f_j(w) \quad\textnormal{with}\quad f_j(w)=\sum_{m=1}^\infty\frac{a_m^j}{w^{m}},\quad a_m^j\in \mathbb{C}.
\end{align}
By the Kellogg--Warschawski theorem \cite[Theorem 3.6]{P},  since the boundary  $\partial \mathcal{O}_j^\varepsilon$ is assumed to be a smooth Jordan Curve,  $\phi_j$
extends to a smooth mapping $\mathbb{C}\backslash \mathbb{D}\to \mathbb{C}\backslash \mathcal{O}_j^\varepsilon$, and its trace, that we shall also denote by $\phi_j$, is a smooth parametrization of $\partial\mathcal{O}_j^\varepsilon$.
 Thus, making the change of variable $z=\phi_j(w)$, $ z' = iw\phi_j'(w)$  in  \eqref{boundry D11},   we obtain
\begin{align} \label{sys1}
  \Im \bigg\{\gamma_j
  \bigg(&
  \Omega\big(\varepsilon  b_j \overline{\phi_j(w)} +\overline{ z_j}\big)+U+ \frac{\gamma_j}{2\varepsilon  b_j} \fint_\T 
  \frac{\overline{\phi_j(\tau)}-\overline{\phi_j(w)}}{\phi_j(\tau)-\phi_j(w)}\, \phi_j'(\tau)d\tau 
 \notag \\&   +\sum_{k=1}^N\frac{\gamma_k}{2}  \fint_\T \frac{
  \overline{\phi_k(\tau)}\phi_k'(\tau)}
  { 
\varepsilon  b_k \phi_k(\tau)+z_k - \varepsilon  b_j \phi_j(w) - z_j}d\tau \bigg)
  w\phi_j'(w)
  \bigg\}=0
\end{align}
for any $w\in\mathbb{T}$ and  $j=1,\ldots,N$. 

To desingularize this system in $\varepsilon$, we follow the ideas of \cite{Hmidi-Mateu} and write, by virtue of \eqref{conf0},
\begin{align*}
\frac{1}{2\varepsilon  b_j}\fint_\T
  \frac{\overline{\phi_j(\tau)}-\overline{\phi_j(w)}}{\phi_j(\tau)-\phi_j(w)}\, \phi_j'(\tau)d\tau & =\frac{1}{2\varepsilon  b_j}\fint_{\mathbb{T}}\frac{\overline{w}-\overline{\tau}+\varepsilon  b_j\big(\overline{f_j(\tau)}-\overline{f_j(w)}\big)}{w-\tau+\varepsilon  b_j\big(f_j(\tau)-f_j(w)\big)}\Big[1+\varepsilon  b_j f_j'(\tau)\Big]d\tau
\\ &=\frac12\fint_{\mathbb{T}}\frac{\overline{w}-\overline{\tau}+\varepsilon  b_j\big(\overline{f_j(\tau)}-\overline{f_j(w)}\big)}{w-\tau+\varepsilon  b_j\big(f_j(\tau)-f_j(w)\big)}f_j'(\tau)d\tau-\frac{1}{2\varepsilon  b_j}\fint_{\mathbb{T}}\frac{\overline{w}-\overline{\tau}}{w-\tau}d\tau
\\ &\qquad  +\frac12\fint_{\mathbb{T}}\frac{(w-\tau)\big(\overline{f_j(\tau)}-\overline{f_j(w)}\big)-(\overline{w}-\overline{\tau})\big(f_j(\tau)-f_j(w)\big)}{(w-\tau)\big(w-\tau+\varepsilon  b_j\big(f_j(\tau)-f_j(w)\big)\big)}d\tau
\\& := \overline{\mathcal{I}^0}[\varepsilon, f_j](w)-\frac{1}{2\varepsilon  b_j}\fint_{\mathbb{T}}\frac{\overline{w}-\overline{\tau}}{w-\tau}d\tau.
\end{align*}
From the obvious identity
$$
\fint_{\mathbb{T}}\frac{\overline{w}-\overline{\tau}}{w-\tau}d\tau=\overline{w},
$$
  the expression of $\phi_j$ in \eqref{conf0}, and  the symmetry of the disc we  can get rid of  the singular term  from the full nonlinearity,
\begin{align*}
\Im\Big\{\frac{1}{2\varepsilon  b_j}\Big(\fint_\T
  \frac{\overline{\phi_j(\tau)}-\overline{\phi_j(w)}}{\phi_j(\tau)-\phi_j(w)}\, \phi_j'(\tau)d\tau\Big) w\phi'_j(w)\Big\}&=\Im\Big\{\overline{\mathcal{I}^0}[\varepsilon, f_j](w)w\Big(1+\varepsilon f'_j(w)\Big)-\frac12 f'_j(w)\Big\}.
\end{align*}
Inserting the last equation into \eqref{sys1}, we conclude that
\begin{equation} \label{rotn+100}
\begin{split}
\gamma_j\mathcal{G}_j^0(\varepsilon,f;\lambda)(w):= -\gamma_j&\Im \Big\{
  \Big(
  \Omega\big(\varepsilon  b_j\overline{w}+\varepsilon^2  b_j^2 \overline{f_j(w)}- \overline{z_j}\big)+U+{\gamma_j}\, \overline{\mathcal{I}^0}[\varepsilon, f_j](w)
  \\&\quad +\sum_{k=1,k\neq j}^N{\gamma_k} \ \overline{\mathcal{J}^0_{kj}}[\varepsilon,f_k, f_j](w)
    \Big)
  w\big(1+\varepsilon b_j f'_j(w)\big)
  -\frac{\gamma_j}{2}  f'_j(w)\Big\}=0
  \end{split}
\end{equation}
for all $w\in \T$ and $j=1,\ldots,N$, where $f=(f_1,\ldots,f_N)$, 
\begin{align}\label{lambda}
  \lambda=(x_1,\ldots,x_N,y_1,\ldots,y_N, \gamma_1,\ldots,\gamma_N,\Omega, U)
\end{align}
are the point vortex parameters and 
\begin{align}\label{I00}
\overline{\mathcal{I}^0}[\varepsilon, f_j](w)&:=\frac12 \fint_{\mathbb{T}}\frac{\overline{w}-\overline{\tau}+\varepsilon  b_j\big(\overline{f_j({\tau})}-\overline{f_j({w})}\big)}{w-\tau+\varepsilon  b_j\big(f_j(\tau)-f_j(w)\big)}f_j'(\tau)d\tau\notag
\\ &\qquad+\frac12\fint_{\mathbb{T}}\frac{2i\Im\big\{(w-\tau)\big(\overline{f_j({\tau})}-\overline{f_j(\overline{w})}\big)\big\}}{(w-\tau)\big(w-\tau+\varepsilon  b_j f_j(\tau)-\varepsilon  b_j f_j(w)\big)}d\tau,\\
\overline{\mathcal{J}_{kj}^0}[\varepsilon,f_k,f_j](w)&:=\frac12\fint_\T \frac{
  \big(\overline{\tau}+\varepsilon  b_k \overline{f_k({\tau})}\big) \big(1+\varepsilon  b_k f_k'({\tau})\big)}
  {\varepsilon  b_k  {\tau}+\varepsilon^2  b_k^2 f_k({\tau}) +z_k-\varepsilon  b_j\big({w}+\varepsilon  b_j f_j({w})\big)-z_j}\, d\tau.\label{J00}
\end{align}
Here we have  used the notation $\overline{\mathcal{I}^0}$ and $\overline{\mathcal{J}_{kj}^0}$  with a complex conjugate in order to unify the notation with the functions  ${\mathcal{I}^\alpha}$,  ${\mathcal{J}_{kj}^\alpha}$ and ${\mathcal{G}_j^\alpha}$  that we shall introduce in next subsection for the gSQG equations. Furthermore, both sides of \eqref{rotn+100} are multiplied by $\gamma_j$ to ensure that the system is valid even if we set some of the $\gamma_j$  equal to zero.

%%%%%%%%%%%%%%%%%%%%%%%%%%%%%%%%%%%%%%%%%%%%%%%%%
\subsubsection{gSQG equations} \label{subsec:sqg}
%%%%%%%%%%%%%%%%%%%%%%%%%%%%%%%%%%%%%%%%%%%%%%%%%%
 The velocity can be recovered from the boundary as follows
\begin{equation}\label{vgsqg}
{v^\varepsilon(z)} =\frac{C_\alpha}{2\pi}\sum_{k=1}^N\frac{\gamma_k}{\varepsilon^2  b_k^2}\int_{\partial\mathcal{D}_k^\varepsilon}\frac{d\xi}{|z-\xi|^\alpha}
\end{equation}
for all $z\in\mathbb{C}$, see for instance \cite{HH}. In view of \eqref{Dj}, suitable change of variables gives
\begin{align*}
v^\varepsilon(z)&=i \sum_{k=1}^N\frac{\gamma_k C_\alpha}{\varepsilon  b_k}\fint_{\partial \mathcal{O}_k^\varepsilon}\frac{1}{|\varepsilon  b_k\xi+z_k-z|^\alpha}d\xi.
\end{align*}
Inserting the  last identity  into \eqref{eqn:boundry00} and taking a complex conjugate inside the real part   leads to
\begin{align*}
&\Re\Big\{\gamma_j\Big(\Omega{z}+U-\sum_{k=1}^N \frac{\gamma_k C_\alpha}{\varepsilon  b_k}\fint_{\partial \mathcal{O}_k^\varepsilon}\frac{1}{|\varepsilon  b_k\xi+z_k-z|^\alpha}d\xi\Big)\overline{z'}\Big\}=0\quad \forall z\in \partial \mathcal{D}_j^\varepsilon, \quad j=1,\ldots,N, 
\end{align*}
where  $z'$ denotes a tangent vector to the boundary  at the point $z$. 
Replacing $z$ by $\varepsilon  b_j{z}+z_j$  in the last system  gives
\begin{equation}\label{boundry D11gsqg}
\begin{split}
&\Re\Big\{\gamma_j \Big(\Omega \big(\varepsilon  b_j{z}+{z_j}\big)+U-\frac{\gamma_j C_{\alpha}}{\varepsilon|\varepsilon|^{\alpha}  b_j^{1+\alpha} }\fint_{\partial \mathcal{O}_j^\varepsilon}\frac{1}{|\xi- z|^\alpha}d\xi\\&\qquad\qquad-\sum_{k=1, k\neq j}^N\frac{\gamma_k C_\alpha}{\varepsilon  b_k}\fint_{\partial \mathcal{O}_k^\varepsilon}\frac{1}{|\varepsilon  b_k\xi+z_k-\varepsilon  b_j\xi-z_j|^\alpha}d\xi\Big)\overline{z'}\Big\}=0\quad \forall  z\in \partial \mathcal{O}_j^\varepsilon.
\end{split}
\end{equation}
We shall    look for conformal parametrizations $\phi_j\colon\T\to \partial \mathcal{O}_j^\varepsilon$  
having the expansions 
\begin{align}\label{conf-alpha}
  \phi_j (w)= w+\varepsilon|\varepsilon|^{\alpha}  b_j^{1+\alpha}  f_j(w) \quad\textnormal{with}\quad f_j(w)=\sum_{m=1}^\infty\frac{a_m^j}{w^{m}},\quad a_m^j\in \mathbb{C},\quad j=1,\ldots,N.
\end{align}
Here, the coefficient  $|\varepsilon|^{\alpha}$ in the definition of the conformal mapping   $\phi_j$ comes from the singularity of the gSQG kernel.
For every $w \in \mathbb{T}$, the  tangent vector is given by $ z' = iw\phi_j'(w)$
 and therefore \eqref{boundry D11gsqg}  becomes 
\begin{equation} \label{sys1sqg0}
\begin{split}
  &\Im \Big\{\gamma_j
  \Big(
  \Omega\big(\varepsilon  b_j {\phi_j(w)} +{ z_j}\big)+U-\frac{\gamma_j C_{\alpha}}{\varepsilon|\varepsilon|^{\alpha}  b_j^{1+\alpha} } \fint_\T 
  \frac{\phi_j'(\tau)}{|\phi_j(\tau)-\phi_j(w)|^\alpha}\, d\tau 
 \\&   \qquad\quad-\sum_{k=1, k\neq j}^N\frac{\gamma_k C_\alpha}{\varepsilon  b_k}  \fint_\T \frac{
 \phi_k'(\tau)}
  { |
\varepsilon  b_k \phi_k(\tau)+z_k - \varepsilon  b_j \phi_j(w) - z_j|^\alpha}d\tau \Big)
 \overline{ w}\overline{\phi_j'(w)}
  \Big\}=0.
  \end{split}
\end{equation}
In order to desingularize the system \eqref{sys1sqg0} we shall use the following Taylor formula,
\begin{equation}\label{taylor0}
\frac{1}{|A+B|^\alpha}=\frac{1}{|A|^\alpha}-\alpha\int_0^1\frac{\Re(A\overline{B})+t|B|^2}{|A+tB|^{2+\alpha}}dt
\end{equation}
which is true for any complex numbers $A, B$ such that $|B|<|A|$. Taking $A=z_k-z_j$ and $B=b_k \phi_k(\tau) -   b_j \phi_j(w) $, one may write
\begin{align*}
  &\frac{1}{ |
  \varepsilon  b_k \phi_k(\tau)+z_k - \varepsilon  b_j \phi_j(w) - z_j|^\alpha}
  =\frac{1} { |z_k-z_j|^\alpha}
  \\&\qquad-\alpha \varepsilon\int_0^1\frac{
    \Re\big[\big(z_k-z_j\big)\big( 
  b_k \overline{\phi_k(\tau)} -   b_j \overline{\phi_j(w)}\big)\big]+t \varepsilon  |  b_k \phi_k(\tau) -   b_j \phi_j(w) |^2}
  { |
\varepsilon  b_k t\phi_k(\tau)+z_k - \varepsilon  b_j t\phi_j(w) + z_j|^{\alpha+2}}dt.
\end{align*}
It follows that
\begin{equation}\label{Tjsqg0}
\begin{split}
&\frac{ 1}{\varepsilon}  \fint_\T \frac{
 \phi_k'(\tau)}
  { |
\varepsilon  b_k \phi_k(\tau)+z_k - \varepsilon  b_j \phi_j(w) -d_j|^\alpha}d\tau=
\\ &\qquad- \alpha \fint_\T\int_0^1\frac{
\Re\big[\big( z_k-z_j\big)\big(
  b_k \overline{\phi_k(\tau)} -   b_j \overline{\phi_j(w)}\big)\big]}
  { |\varepsilon  b_k t\phi_k(\tau)+z_k- \varepsilon  b_j t\phi_j(w) -z_j|^{\alpha+2}} \phi_k'(\tau)dt\, d\tau 
\\ &\qquad- \alpha \int_0^1\fint_\T\frac{t \varepsilon |  b_k \phi_k(\tau) -   b_j \phi_j(w) |^2} { |\varepsilon  b_k t\phi_k(\tau)+z_k- \varepsilon  b_j t\phi_j(w) -z_j|^{\alpha+2}} \phi_k'(\tau)dt\, d\tau .
\end{split}
\end{equation}
On the other hand, from \eqref{conf-alpha}, one has
\begin{align*}
\frac{C_{\alpha}}{\varepsilon|\varepsilon|^{\alpha}} \fint_\T 
  \frac{\phi_j'(\tau)}{|\phi_j(\tau)-\phi_j(w)|^\alpha}\, d\tau
  &=\frac{C_{\alpha}}{\varepsilon|\varepsilon|^{\alpha} } \fint_\T 
  \frac{d\tau}{|\tau-w|^\alpha}+ C_{\alpha}\fint_\T 
  \frac{ b_j^{1+\alpha} f_j'(\tau)d\tau}{|w-\tau+t\varepsilon|\varepsilon|^\alpha  b_j^{1+\alpha}\big(f_j(\tau)-f_j(w)\big)|^\alpha}
\\ &\quad+\frac{C_{\alpha}}{\varepsilon|\varepsilon|^{\alpha}} \fint_\T \Big(
  \frac{1}{|w-\tau+t\varepsilon|\varepsilon|^\alpha  b_j^{1+\alpha}\big(f_j(\tau)-f_j(w)\big)|^\alpha}-  \frac{1}{|\tau-w|^\alpha}\Big)\, d\tau
 .
\end{align*}
Using the identity \cite[Page 337]{HH},
\begin{equation}\label{mu}
C_{\alpha}\fint_\T\frac{d\tau}{|\tau-w|^\alpha}=\frac{\alpha C_{\alpha}\Gamma(1-\alpha)}{(2-\alpha)\Gamma^2(1-\frac\alpha2)}w
 =:\mu_\alpha w ,
\end{equation}
and applying the formula \eqref{taylor0} with $A=\tau-w$ and $B=\varepsilon|\varepsilon|^\alpha  b_j^{1+\alpha}\big(f_j(\tau)-f_j(w)\big)$, we find 
\begin{equation*}
\begin{split}
\frac{C_{\alpha}}{\varepsilon|\varepsilon|^{\alpha}} \fint_\T 
  \frac{\phi_j'(\tau)}{|\phi_j(\tau)-\phi_j(w)|^\alpha}\, d\tau &=\frac{\mu_{\alpha}}{\varepsilon|\varepsilon|^{\alpha} }w+C_{\alpha} b_j^{1+\alpha}\fint_\T 
  \frac{f_j'(\tau)}{|w-\tau+t\varepsilon|\varepsilon|^\alpha  b_j^{1+\alpha}\big(f_j(\tau)-f_j(w)\big)|^\alpha}\, d\tau 
   \\ &\quad-\alpha C_{\alpha}  b_j^{1+\alpha} \fint_\T\int_0^1 
  \frac{\Re\Big(\big(f_j(\tau)-f_j(w)\big)\big(
 \overline{\tau} - \overline{w}\big)\Big)}{|w-\tau+t\varepsilon|\varepsilon|^\alpha  b_j^{1+\alpha}\big(f_j(\tau)-f_j(w)\big)|^{2+\alpha}}d\tau dt
 \\ &\quad-\alpha C_{\alpha}  \varepsilon|\varepsilon|^\alpha  b_j^{1+\alpha}\fint_\T\int_0^1 
  \frac{t|f_j(\tau)-f_j(w)|^2}{|w-\tau+t\varepsilon|\varepsilon|^\alpha  b_j^{1+\alpha}\big(f_j(\tau)-f_j(w)\big)|^{2+\alpha}}d\tau dt
   \\ &:=\frac{\mu_{\alpha}}{\varepsilon|\varepsilon|^{\alpha}  }w- b_j^{1+\alpha} \mathcal{I}^\alpha[\varepsilon, f_j](w).
\end{split}
\end{equation*}
As in the Euler case, by \eqref{conf-alpha} and the symmetry of the disc the singular term disappears from the nonlinearity, 
\begin{equation}\label{Qjsqg0}
\begin{split}
 & \Im \bigg\{
 \frac{C_{\alpha}}{\varepsilon|\varepsilon|^{\alpha}  b_j^{1+\alpha} } \fint_\T 
  \frac{\phi_j'(\tau)}{|\phi_j(\tau)-\phi_j(w)|^\alpha}\, d\tau\, \overline{ w}\,\overline{\phi_j'(w)}\bigg\}=\\ &\qquad\Im \Big\{\mu_\alpha \overline{f_j'(w)}-\mathcal{I}^\alpha[\varepsilon, f_j](w)\overline{ w}\big(1+\varepsilon|\varepsilon|^{\alpha} b_j^{1+\alpha} \overline{f_j'(w)} \big)\Big\}.
\end{split}
\end{equation}
Inserting \eqref{Tjsqg0} and  \eqref{Qjsqg0} into \eqref{sys1sqg0} we get
\begin{equation}\label{Fjsqgalpha0}
\begin{split}
 \gamma_j\mathcal{G}^\alpha_j(\varepsilon,f;\lambda)(w)&:=\gamma_j \Im \Big\{
  \Big(
  \Omega\big(\varepsilon  b_j w+\varepsilon^2|\varepsilon|^\alpha  b_j^{2+\alpha}  f_j(w) +{ z_j}\big)+U
 +\gamma_j \,\mathcal{I}^\alpha[\varepsilon, f_j](w) 
 \\
 & \quad +\sum_{k=1, k\neq j}^N\gamma_k \mathcal{J}^\alpha_k[\varepsilon,f_k,f_j](w)  \Big)
 \overline{ w}\big(1+\varepsilon|\varepsilon|^\alpha b_j^{1+\alpha} \overline{f_j'(w)} \big)
  -\mu_\alpha\, \gamma_j \overline{f_j'(w)}\Big\}=0
\end{split}
\end{equation}
for all  $w\in \T$ and $j=1,\ldots,N$, where $f=(f_1,\ldots,f_N)$, $\lambda$ denotes the point vortex parameters \eqref{lambda} and 
\begin{align}\label{Ialpha0}
\mathcal{I}^\alpha[\varepsilon, f_j](w)&:= -C_{\alpha} \fint_\T 
  \frac{f_j'(\tau)}{|\tau-w+\varepsilon|\varepsilon|^\alpha b_j^{1+\alpha}\big( f_j(\tau)-f_j(w)\big)|^\alpha}\, d\tau\nonumber
  \\
   &\;+\alpha C_{\alpha} \fint_\T\int_0^1 
  \frac{\Re\big[\big(f_j(\tau)-f_j(w)\big)\big(
 \overline{\tau} - \overline{w}\big)\big]+\varepsilon|\varepsilon|^\alpha t|f_j(\tau)-f_j(w)|^2}{|w-\tau+t\varepsilon|\varepsilon|^\alpha  b_j^{1+\alpha}\big(f_j(\tau)-f_j(w)\big)|^{2+\alpha}}dt\, d\tau,
  \\
\mathcal{J}^\alpha_k[\varepsilon,f_k,f_j;\lambda](w)&:= \frac{\alpha C_\alpha}{  b_k}\bigg[ \fint_\T \int_0^1\frac{
\Re\big[(z_k-z_j)\big(
  b_k \overline{\phi_k(\tau)} -   b_j \overline{\phi_j(w)}\big)\big]}
  { |\varepsilon  b_k t\phi_k(\tau)+z_k- \varepsilon  b_j t\phi_j(w) -z_j|^{\alpha+2}} \phi_k'(\tau)dt\, d\tau \nonumber 
\\ 
&\qquad + \fint_\T \int_0^1\frac{
t \varepsilon | 
  b_k \phi_k(\tau) -   b_j \phi_j(w) |^2\phi_k'(\tau) }
  { |\varepsilon  b_k t\phi_k(\tau)+z_k- \varepsilon  b_j t\phi_j(w) -z_j|^{\alpha+2}} dt\, d\tau\bigg] .\label{Jalpha0}
\end{align}

\subsection{Regularity and linearization of the functional \texorpdfstring{$\mathcal{G}^\alpha$}{G\^{}alpha}}\label{sec:linearize}
For any 
$\beta\in(0,1)$, we denote by $C^\beta(\mathbb{T}) $  the space of continuous functions $f\colon\T\to \mathbb{C}$ such that
$$
\Vert f\Vert_{C^\beta(\mathbb{T})}:= \Vert f\Vert_{L^\infty(\mathbb{T})}+\sup_{x\neq y\in \mathbb{T}}\frac{\vert f(x)-f(y)\vert}{\vert x-y\vert^\beta}<\infty.
$$
For any integer $n$, the space $C^{n+\beta}(\mathbb{T})$ stands for the set of functions $f$ of class $C^n$ whose $n$-th order derivatives are H\"older continuous  with exponent $\beta$. It is equipped with the usual  norm,
$$
\Vert f\Vert_{C^{n+\beta}(\mathbb{T})}:= \Vert f\Vert_{L^\infty(\mathbb{T})}+\Big\Vert \frac{d^n f}{dw^n}\Big\Vert_{C^\beta(\mathbb{T})}.
$$ 
We henceforth fix some $\beta \in (0,1)$, and for
$\alpha\in [0,1)$ define
$$
\eta_\alpha:=
\begin{cases}
  1-\alpha\, & \mbox{if } 0<\alpha<1,\\
  \beta\, & \mbox{if } \alpha=0.
\end{cases}
$$  

Consider the Banach spaces
\begin{equation}\label{Valpha0}
\begin{split}
&\mathcal{V}_0^\alpha :=\underbrace{V_0^\alpha\times\cdots \times V_0^\alpha}_{\text{$N$ times}}, \quad
\mathcal{W}_0^\alpha :=\underbrace{W^\alpha_0\times\cdots \times W_0^\alpha}_{\text{$N$ times}}, \quad\textnormal{and}\quad  
\widetilde{\mathcal{W}}_0^\alpha:=\underbrace{\widetilde{W}^\alpha_0\times\cdots \times \widetilde{W}_0^\alpha}_{\text{$N$ times}}
\\
\textnormal{with}\qquad  &V_0^\alpha:=
        \Big\{f\in  C^{1+{\eta_\alpha}}(\mathbb{T}): f(w)=\sum_{n\geq 1}
{a_n}\,\overline{w}^n,\;   a_n\in \mathbb{C}\Big\},\\
 &  W_0^\alpha:=
        \Big\{g\in  C^{{\eta_\alpha}}(\mathbb{T}): g(w)=\sum_{n\neq 0}
c_n\,{w}^n, \; c_{-n}=\overline{c_n}\in \mathbb{C}\Big\}, \\
& \widetilde{W}^\alpha_0:=\Big\{g\in  W^\alpha: 
c_{1}=0\Big\}.
\end{split}
\end{equation}
Here the subscript $0$ refers to the fact that no a priori symmetry assumption is made on the patch boundaries. The restriction on the Fourier coefficients of functions in $V^\alpha_0$ guarantees that these functions can be extended to holomorphic functions on $\C \without \overline{\D}$. The restriction on the Fourier coefficients of functions in $W_0^\alpha$, on the other hand, simply says that these functions are real-valued.

We denote by $B_0^\alpha$ the unit ball in $\mathcal{V}_0^\alpha$,
$$
B_0^\alpha:=\big\{f\in \mathcal{V}^\alpha: \|f\|_{C^{\eta_\alpha+1}(\mathbb{T})}< 1\big\}
$$
and by $\mathcal{B}_0^\alpha$ the unit ball in $\mathcal{W}_0^\alpha$,
$$
\mathcal{B}_0^\alpha:=\big\{f\in \mathcal{W}^\alpha: \|f\|_{C^{\eta_\alpha}(\mathbb{T})}< 1\big\}.
$$
We shall  unify the expression of the function $\mathcal{G}^\alpha_j$ in \eqref{Fjsqgalpha0} and  the  function $\mathcal{G}^0_j$ in  \eqref{rotn+100} as follows: For every $f\in \mathcal{V}_0^\alpha$ and  $w\in \T$, 
\begin{equation}\label{exp:Galpha}
\begin{split}
\mathcal{G}^\alpha_{j}&(\varepsilon,f;\lambda)(w):=\Im \Big\{
 \Big( \Omega\big(\varepsilon  b_j w+\varepsilon^2|\varepsilon|^\alpha  b_j^{2+\alpha}  f_j(w)+{ z_j}\big)+U\Big)
 \overline{ w}\big(1+\varepsilon|\varepsilon|^\alpha b_j^{1+\alpha} \overline{f_j'(w)} \big)
\\
&-\mu_\alpha\, \gamma_j \overline{f_j'(w)}+
\Big( \gamma_j \,\mathcal{I}^\alpha[\varepsilon, f_j](w)+ \sum_{k=1, k\neq j}^N\gamma_k \mathcal{J}^\alpha_k[\varepsilon,f_k,f_j](w) \Big)\overline{ w}\big(1+\varepsilon|\varepsilon|^\alpha b_j^{1+\alpha} \overline{f_j'(w)} \big)\Big\},
\end{split}
\end{equation}
 where $\mu_\alpha$ is defined in  \eqref{mu}, and
 $\mathcal{I}^\alpha[\varepsilon, f_j](w)$ and $\mathcal{J}^\alpha_k[\varepsilon,f_k,f_j](w)$ are given by \eqref{Ialpha0}--\eqref{Jalpha0} if $\alpha\in (0,1)$ and by  \eqref{I00}--\eqref{J00} if $\alpha=0$. We then define the nonlinear operator 
\begin{equation*}
\mathcal{G}^\alpha(\varepsilon,f;\lambda):=\big(\mathcal{G}^\alpha_1(\varepsilon,f;\lambda),\ldots,\mathcal{G}^\alpha_N(\varepsilon,f;\lambda)\big).\end{equation*}

\begin{proposition}\label{proposition2}
  Let $\alpha\in[0,1)$ and let $\lambda^*$ solve \eqref{alg-sysP}.
  \begin{enumerate}[label=\rm(\roman*)]
  \item There exists $\varepsilon_0 > 0$ and a small neighborhood $\Lambda$ of $\lambda^*$ such that $\mathcal{G}^\alpha$ can be extended to a $C^1$ mapping $(-\varepsilon_0,\varepsilon_0)\times B_0^\alpha\times \Lambda \to \mathcal{W}_0^\alpha$.  
  \item  For all $\lambda \in \Lambda$ one has
    \begin{equation*} 
      \mathcal{G}_j^\alpha (0,0;\lambda)(w)=
       \Im \big\{ \mathcal{P}_j^\alpha(\lambda) \overline{w}\big\}
    \end{equation*}
  for $w \in \T$, where $\mathcal{P}_j^\alpha(\lambda)$ is given by \eqref{alg-sysP}.

\item The Frech\'et derivative of $\mathcal{G}^\alpha$ with respect to $f$ at $(0,0;\lambda)$ is given by
  \begin{align*}
    D_{f}\mathcal{G}^\alpha(0,0;\lambda)h(w)=&\sum_{n\geq 1}M_n^\alpha\begin{pmatrix}
      {\gamma_1}\, \Im\big\{a_n^1\,w^{n+1}\big\} \\
      \vdots
      \\
      {\gamma_N}\, \Im\big\{a_n^N\,w^{n+1}\big\}
    \end{pmatrix},
    \notag
  \end{align*}
  where here $h=(h_1,\ldots,h_N)\in \mathcal{V}_0^\alpha$ with
  $
  h_j(w):=\displaystyle\sum_{n\geq 1}a_n^j\overline{w}^{n},
  $
  and
  \begin{equation}\label{lambdan}
    M_n^\alpha:=
        \frac{\Gamma(1+\frac\alpha2) \Gamma(1-\alpha)}{2^{2-\alpha}\Gamma^3(1-\frac\alpha2)}\Big(\frac{2(n+1)}{1-\frac\alpha2}-\frac{(1+\frac\alpha2)_n}{(1-\frac\alpha2)_n}-\frac{(1+\frac\alpha2)_{n+1}}{(1-\frac\alpha2)_{n+1}}\Big) .
  \end{equation}
\item  For any $\lambda \in \Lambda$, the linear operator 
  $D_{f}\mathcal{G}^\alpha (0,0;\lambda)\colon \mathcal{V}_0^\alpha \to  \widetilde{\mathcal{W}}_0^\alpha$ is an isomorphism.
  \end{enumerate}
\end{proposition}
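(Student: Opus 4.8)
The plan is to establish the four parts in sequence, concentrating most of the work on the explicit linearization in (iii) and the multiplier analysis it feeds into (iv).

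For part (i), I would first observe that the only terms in \eqref{exp:Galpha} which could fail to be regular in $\varepsilon$ are the self-interaction integrals $\mathcal{I}^\alpha[\varepsilon,f_j]$ and the cross-interaction integrals $\mathcal{J}^\alpha_k[\varepsilon,f_k,f_j]$, since the algebraic prefactors are manifestly smooth. The desingularization carried out in Sections~\ref{subsec:euler}--\ref{subsec:sqg} has already removed the apparent $1/\varepsilon$ (respectively $1/(\varepsilon|\varepsilon|^\alpha)$) singularities, so it remains to check that the resulting operators are of class $C^1$ with values in $W_0^\alpha$. For $\mathcal{J}^\alpha_k$ with $k\neq j$ this is immediate: by the disjointness \eqref{intersection} the denominators are bounded below uniformly for small $\varepsilon$, so the integrand is smooth in all variables and differentiation under the integral sign applies. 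For $\mathcal{I}^\alpha$ the kernel is only weakly singular --- of order $|\tau-w|^{-\alpha}$ when $\alpha\in(0,1)$, of Cauchy type when $\alpha=0$ --- and I would check, exactly as in \cite{Hmidi-Mateu, HH}, that these layer-potential-type operators map $C^{1+\eta_\alpha}(\mathbb{T})$ into $C^{\eta_\alpha}(\mathbb{T})$ and depend continuously differentiably on $(\varepsilon,f,\lambda)$. The real-valued structure of the output, i.e.\ membership in $W_0^\alpha$, then follows from the fact that $\mathcal{G}^\alpha_j$ is an imaginary part together with the explicit Fourier supports of each factor.

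Part (ii) is a direct evaluation. Setting $f=0$ and $\varepsilon=0$ in \eqref{exp:Galpha} reduces the $\Omega$- and $\mu_\alpha$-contributions to $\Im\{(\Omega z_j+U)\overline w\}$, gives $\mathcal{I}^\alpha[0,0]=0$, and collapses $\mathcal{J}^\alpha_k[0,0,0]$ to a single elementary contour integral. Using $\fint_\T\overline\tau\,d\tau=1$ and $\fint_\T 1\,d\tau=\fint_\T\tau\,d\tau=0$, I would obtain $\mathcal{J}^\alpha_k[0,0,0]=\tfrac{\widehat{C}_\alpha}{2}\,\tfrac{z_k-z_j}{|z_k-z_j|^{\alpha+2}}$, and since $\tfrac{z_k-z_j}{|z_k-z_j|^{\alpha+2}}=-\tfrac{z_j-z_k}{|z_j-z_k|^{\alpha+2}}$ this assembles exactly into $\Im\{\mathcal{P}^\alpha_j(\lambda)\overline w\}$. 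For part (iii) the central observation is that at the base point $(\varepsilon,f)=(0,0)$ every $f$-dependence in the $\Omega$-term and in each $\mathcal{J}^\alpha_k$ enters only through the conformal maps $\phi_j=w+\varepsilon|\varepsilon|^\alpha b_j^{1+\alpha}f_j$, hence carries a positive power of $\varepsilon$; consequently the Fréchet derivative of these terms at $\varepsilon=0$ vanishes. Only the linear term $-\mu_\alpha\gamma_j\overline{f_j'}$ and the self-interaction $\gamma_j\mathcal{I}^\alpha[\varepsilon,f_j]\overline w$ survive, giving $D_f\mathcal{G}^\alpha_j(0,0;\lambda)h=\gamma_j\Im\{-\mu_\alpha\overline{h_j'}+(D_f\mathcal{I}^\alpha[0,0]h_j)\overline w\}$; in particular the linearization is block-diagonal in $j$. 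It then remains to evaluate this $\mathbb{R}$-linear operator on the Fourier basis $h_j=\overline w^n$: using \eqref{mu} and the standard integral identities for $\fint_\T\tau^{-m}|\tau-w|^{-\alpha}\,d\tau$ in terms of Pochhammer symbols (as in \cite[p.~337]{HH}), a direct computation collapses the $\mu_\alpha$- and $\mathcal{I}^\alpha$-contributions into the single real multiplier $M_n^\alpha$ of \eqref{lambdan}, yielding the stated diagonal form.

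Finally, part (iv) would follow from the diagonal representation in (iii) once the multipliers are controlled. The operator sends $h_j=\sum_{n\geq1}a_n^j\overline w^n$ to $\gamma_j\sum_{n\geq1}M_n^\alpha\Im\{a_n^j w^{n+1}\}$, whose Fourier support lies in the frequencies $\geq 2$; this is precisely why the natural target is $\widetilde{\mathcal{W}}_0^\alpha$, with its $c_1=0$ constraint. Injectivity is then immediate provided $M_n^\alpha\neq 0$ and $\gamma_j\neq 0$, while surjectivity with a bounded inverse amounts to dividing the frequency-$(n+1)$ Fourier data of $g\in\widetilde{\mathcal{W}}_0^\alpha$ by $\gamma_j M_n^\alpha$. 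The crux of the argument --- and the step I expect to be most delicate --- is the two-sided bound $0<c_1 n\le M_n^\alpha\le c_2 n$ for all $n\ge 1$, uniformly for $\lambda$ near $\lambda^*$. The upper bound is read off directly from \eqref{lambdan}; for the lower bound I would note that the prefactor in \eqref{lambdan} is positive for $\alpha\in[0,1)$ and that the Pochhammer ratios grow only like $n^\alpha=o(n)$, whereas the leading term grows like $\tfrac{2(n+1)}{1-\alpha/2}$, so that $M_n^\alpha>0$ for every $n\ge 1$ with $M_n^\alpha/n$ bounded away from $0$. Because $M_n^\alpha$ is comparable to $n$, multiplication by $1/M_n^\alpha$ gains exactly one derivative, mapping $C^{\eta_\alpha}(\mathbb{T})$ boundedly into $C^{1+\eta_\alpha}(\mathbb{T})$, so the inverse is bounded from $\widetilde{\mathcal{W}}_0^\alpha$ onto $\mathcal{V}_0^\alpha$ and the isomorphism follows.
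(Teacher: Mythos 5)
Parts (i)--(iii) of your proposal are correct and essentially follow the paper's own argument: regularity of $\mathcal G^\alpha$ comes from the disjointness of the patches for the cross terms $\mathcal J^\alpha_k$ and from the Hmidi--Mateu/layer-potential analysis for the weakly singular self-interaction $\mathcal I^\alpha$; the evaluation at $(0,0;\lambda)$ is the same elementary contour-integral computation; and your key structural observation for (iii) --- that at $\varepsilon=0$ the $\Omega$-term and every $\mathcal J^\alpha_k$ lose all dependence on $f$, leaving only $\gamma_j\Im\big\{\mathcal I^\alpha[0,h_j]\,\overline w-\mu_\alpha\overline{h_j'}\big\}$, which is then diagonalized on the Fourier basis via the identities of \cite{HH} --- is exactly \eqref{dgj-sqg0} together with the computation the paper imports from \cite{Hmidi-Mateu}.

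Part (iv), however, contains a genuine gap. You pass from the two-sided bound $c_1 n\le M_n^\alpha\le c_2 n$ to the assertion that ``multiplication by $1/M_n^\alpha$ gains exactly one derivative, mapping $C^{\eta_\alpha}(\mathbb T)$ boundedly into $C^{1+\eta_\alpha}(\mathbb T)$.'' This implication is not valid: H\"older spaces are not characterized by the size of Fourier coefficients, and a Fourier multiplier is bounded on $C^{\eta_\alpha}(\mathbb T)=B^{\eta_\alpha}_{\infty,\infty}$ only under additional regularity of the symbol in $n$ (Marcinkiewicz/Mikhlin-type conditions on discrete differences, equivalently uniform $L^1$-bounds on the dyadic pieces of the multiplier kernel); pointwise comparability of $M_n^\alpha$ with $n$ alone is insufficient, even though it would suffice in $L^2$-based Sobolev spaces. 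So the surjectivity step --- ``dividing the Fourier data by $\gamma_j M_n^\alpha$'' --- produces a formal series whose membership in $C^{1+\eta_\alpha}$ is precisely what remains unproved. To close the gap you must either verify such difference estimates for the explicit Gamma-quotient symbol $n/M_n^\alpha$, or argue as the paper does, which sidesteps multiplier theory altogether: by \eqref{dgj-sqg0} the operator is the sum of $-\mu_\alpha\gamma_j\Im\{\overline{h_j'}\}$, which \emph{is} an isomorphism $V_0^\alpha\to\widetilde W_0^\alpha$ (differentiation and its inverse, integration along $\mathbb T$, are bounded between $C^{1+\eta_\alpha}$ and $C^{\eta_\alpha}$), and the term $\gamma_j\Im\{\mathcal I^\alpha[0,h_j]\,\overline w\}$, which is smoothing and hence compact; therefore $D_f\mathcal G_j^\alpha(0,0;\lambda)$ is Fredholm of index zero, and injectivity --- which does follow from $M_n^\alpha\neq0$, i.e.\ from your multiplier computation --- yields the isomorphism, the boundedness of the inverse coming for free from the open mapping theorem. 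A secondary, fixable point: your positivity of $M_n^\alpha$ is argued only asymptotically (the Pochhammer ratios are $o(n)$), which says nothing for fixed small $n$; one must also check the low modes, e.g.\ the bracket in \eqref{lambdan} at $n=1$ equals $4(1-\alpha)/\big((1-\tfrac\alpha2)(2-\tfrac\alpha2)\big)>0$, as is done in \cite{Hmidi-Mateu}.
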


\begin{proof}

Note that the term
\begin{equation*}
\Im \Big\{
 \Big( \Omega\big(\varepsilon  b_j w+\varepsilon^2|\varepsilon|^\alpha  b_j^{2+\alpha}  f_j(w)+{ z_j}\big)+U+ \gamma_j \,\mathcal{I}^\alpha[\varepsilon, f_j](w)\Big)
 \overline{ w}\big(1+\varepsilon|\varepsilon|^\alpha b_j^{1+\alpha} \overline{f_j'(w)} \big)
  -\mu_\alpha\, \gamma_j \overline{f_j'(w)}\Big\},
\end{equation*}
 appears identically in the study of the vortex pairs in \cite{Hmidi-Mateu}, where it was shown that it is $C^1$ in $\varepsilon$ and $f$. The spaces used in \cite{Hmidi-Mateu} are $V^\alpha_0$  with the reflection symmetry  $\overline{f_j(w)}=f_j(\overline{w})$ and $W_0^\alpha$ with the symmetry  $g_j(\overline{w})=-g_j(w)$, but the study of the regularity  can be easily generalized  to our case without further difficulties. The terms 
  \begin{equation*}
  \sum_{k=1, k\neq j}^N \Im \Big\{
\gamma_k \mathcal{J}^\alpha_k[\varepsilon,f_k,f_j;\lambda](w)  \overline{ w}\big(1+\varepsilon|\varepsilon|^\alpha b_j^{1+\alpha} \overline{f_j'(w)} \big)\Big\}
\end{equation*}   describe the
interaction between the boundaries of the patches, which are disjoint provided $\varepsilon$ is sufficiently small. Therefore the  kernel involved is sufficiently  smooth and can be treated  in a very classical way.

\vspace{0.2cm}

Next, we shall prove  {\rm(ii)} and {\rm (iii)} in the case $\alpha=0$.  Substituting $\varepsilon=0$ in \eqref{I00} and \eqref{J00} gives
\begin{align*}
\overline{\mathcal{J}_{kj}^0}[0,f_k,f_j;\lambda](w)&=-\frac12  \frac{
1}
  {z_j-z_k},\\
\overline{\mathcal{I}^0}[0, f_j](w)&=\frac12\fint_{\mathbb{T}}\frac{\overline{w}-\overline{\tau}}{w-\tau}f_j'(\tau)d\tau +\fint_{\mathbb{T}}\frac{i\Im\big\{(w-\tau)\big(\overline{f_j({\tau})}-\overline{f_j({w})}\big)\big\}}{(w-\tau)^2}d\tau
=0,
\end{align*}
where we have used  the residue theorem in the last identity.
Then, from \eqref{rotn+100}, we get
\begin{align} \label{rotn+1}
\mathcal{G}_j^0 (0,f;\lambda)(w)=-\Im \Big\{
  \Big(&
 -\Omega \overline{z_j}+U-\frac12\sum_{k=1,k\neq j}^N\, \frac{
\gamma_k}
  {z_j- z_k}
    \Big)
  w
  -\frac{\gamma_j}{2} f'_j(w)\Big\}
\end{align}
and hence
\begin{align*} 
\mathcal{G}_j^0 (0,0;\lambda)(w)&= -\Im \Big\{
  \Big(
 -\Omega \overline{z_j}+U-\frac12\sum_{k=1,k\neq j}^N\, \frac{
\gamma_k}
  {z_j- z_k}
    \Big)
  w\Big\}\\ &\stackrel{ \eqref{alg-sysP}}{=} -\Im \big\{
 \overline{\mathcal{P}^0_j(\lambda)}
  w\big\},
  \end{align*}
which shows {\rm(ii)}.
Differentiating \eqref{rotn+1} with respect to $f$ gives
\begin{align*} 
\partial_{f_k}\mathcal{G}_j^0(0,0;\lambda)h_j(w)=\delta_{jk}\frac{\gamma_j}{2}\Im \big\{h'_j(w)\big\},
\end{align*}
implying {\rm(iii)} in the case $\alpha=0$.

\vspace{0.2cm}

To prove {\rm(ii)} and {\rm(iii)} in the case $\alpha\in(0,1)$, we substitute  $\varepsilon=0$ in \eqref{Ialpha0} and \eqref{Jalpha0} and obtain
\begin{equation*}
\begin{split}
\mathcal{I}^\alpha[0, f_j](w)&=- C_{\alpha} \fint_\T 
  \frac{f_j'(\tau)}{|\tau-w|^\alpha}\, d\tau+\alpha C_{\alpha} \fint_\T\int_0^1 
  \frac{\Re\big[\big(f_j(\tau)-f_j(w)\big)\big(
 \overline{\tau} - \overline{w}\big)\big]}{|w+\tau|^{2+\alpha}}d\tau dt,\\
\mathcal{J}^\alpha_k[0,f_k,f_j;\lambda](w)
 &= \frac{\alpha C_\alpha}{2} \frac{
z_k-z_j}
  { |z_k -z_j|^{\alpha+2}}.
\end{split}
\end{equation*}
Thus, by \eqref{Fjsqgalpha0},  one has
\begin{equation}\label{Fjsqgalpha0eps0}
\begin{split}
 \mathcal{G}^\alpha_j(0,f;\lambda)(w)= \Im \Big\{&
  \Big(
  \Omega{ z_j} +U
+\frac{\alpha C_\alpha}{2} \sum_{{k=1, k\neq j}}^N \gamma_k\frac{
z_k-z_j}
  { |z_k -z_j|^{\alpha+2}}  \Big)
 \overline{ w}
 \\ &+\gamma_j\, \overline{ w} \, \mathcal{I}^\alpha[0, f_j](w) -\mu_\alpha\, \gamma_j \overline{f_j'(w)}\Big\}.
\end{split}
\end{equation}
It follows that 
\begin{equation*}
\begin{split}
 \mathcal{G}^\alpha_j(0,0;\lambda)(w)&= \Im \Big\{
  \Big(
  \Omega{ z_j}+U-\frac{\alpha C_\alpha}{2} \sum_{k=1, k\neq j}^N \gamma_k\frac{
z_j-z_k}
  { |z_k -z_j|^{\alpha+2}}  \Big)
 \overline{ w}
\Big\}.
\end{split}
\end{equation*}
Comparing the last expression with \eqref{alg-sysP} concludes {\rm(ii)}. 
Next, differentiating \eqref{Fjsqgalpha0eps0} with respect to $f$ gives
\begin{align} \label{dgj-sqg0}
& \partial_{f_k}\mathcal{G}_j^\alpha(0,0;\lambda)h_j(w)=\delta_{jk} \gamma_j\Im \Big\{
   \mathcal{I}^\alpha[ 0,h_j](w)
 \overline{ w}-\mu_\alpha \overline{h_j'(w)}\Big\}.
\end{align}
The last expression was explicitly computed  in \cite[Pages 726--728]{Hmidi-Mateu}  and takes the form
\begin{equation*}
 \partial_{f_j}\mathcal{G}_j^\alpha(0,0;\lambda)h_j(w)= \gamma_j\sum_{n\geq 1}\frac{\alpha C_\alpha \Gamma(1-\alpha)}{4\Gamma^2(1-\frac\alpha2)}\Big(\frac{2(n+1)}{1-\frac\alpha2}-\frac{(1+\frac\alpha2)_n}{(1-\frac\alpha2)_n}-\frac{(1+\frac\alpha2)_{n+1}}{(1-\frac\alpha2)_{n+1}}\Big) \Im \big\{ a_n^j w^{n+1}
  \big\},
\end{equation*}
getting the announced result.

\vspace{0.2cm}

 The proof of {\rm(iv)}  is elementary in the case $\alpha=0$, since 
\begin{align*}
D_{f}\mathcal{G}^\alpha(0,0;\lambda)h(w)=&\sum_{n\geq 1}\frac12\begin{pmatrix}
{\gamma_1}\, \Im\big\{h_1'(w)\big\} \\
\vdots
\\
{\gamma_N}\, \Im\big\{h_N'(w)\big\}
\end{pmatrix}.
\notag
\end{align*}
For the case $\alpha\in(0,1)$, we reproduce similar arguments to \cite{Hmidi-Mateu}. In particular, we observe from \eqref{dgj-sqg0}  that  $D_{f}\mathcal{G}_j^\alpha(0,0;\lambda)\colon   \mathcal{V}_0^\alpha\to  \widetilde{\mathcal{W}}_0^\alpha$ is a compact perturbation of a Fredholm operator of index zero, since  $ \mathcal{I}^\alpha\colon   \mathcal{V}_0^\alpha\to   \widetilde{\mathcal{W} }_0^\alpha$ is smoothing.   Therefore $\partial_{f}\mathcal{G}_j^\alpha(0,0;\lambda)\colon  \mathcal{V}_0^\alpha\to   \widetilde{\mathcal{W}}_0^\alpha$ is Fredholm with index zero.
To check that is has a trivial kernel we can argue as in \cite[page 728]{Hmidi-Mateu}.
This concludes the proof of the proposition.
\end{proof}

\begin{remark}\label{remark-23}
  Since $\widetilde{\mathcal W}_0^\alpha \subset \mathcal W_0^\alpha$ has codimension $2N$, Proposition~\ref{proposition2}(iv) implies that $D_f \mathcal G^\alpha(0,0;\lambda^*)$ is Fredholm with finite index $-2N$. As we will see, even for non-generate equilibria the full linearized operator $D_{(f;\lambda)} \mathcal G^\alpha(0,0;\lambda^*)$ will have a range with nonzero codimension. While this rules out an immediate application of the implicit function theorem, one can still, in a small neighborhood of $(0,0;\lambda^*)$, the nonlinear problem $\mathcal G^\alpha(\varepsilon,f;\lambda) = 0$ will always admit a Lyapunov--Schmidt reduction to an equation in finite dimensions; see for instance \cite{kielhofer}. Of course, studying this reduced equation may still be quite challenging, and may in particular involve evaluating further Fr\'echet derivatives of $\mathcal G^\alpha$. 
\end{remark}

\subsection{Integral identities for the functional \texorpdfstring{$\mathcal{G}^\alpha$}{G\^{}alpha}}\label{sec:Gidentities}
In this short section we prove several integral identities for the functional $\mathcal{G}^\alpha_j$ which follow from Lemma~\ref{lem:idens}. 

First, we show how that functional $\mathcal{G}^\alpha_j$ defined in \eqref{Fjsqgalpha0} can be written in terms of the relative stream function 
\begin{equation}\label{relative-stream-function}
\Psi^\varepsilon(z):=-\frac12\Omega|z|^2-\frac12U\overline{z}+\psi^\varepsilon(z)
\end{equation}
restricted to points on the boundary $\varepsilon b_j\phi_j(w)+z_j\in \partial \mathcal{D}_j^\varepsilon$. Writing $w\in \mathbb{T}$ as $w=e^{i\theta}$, we claim that
\begin{align}\label{psiUG}
  \partial_\theta\Psi^\varepsilon(\varepsilon b_j\phi_j(w)+z_j)&=-\varepsilon b_j\, \mathcal{G}^\alpha_j(\varepsilon,f;\lambda)(w).
\end{align}
To see this, we use $\partial_\theta \phi_j(w)= iw\phi_j'(w)$ and $v^\varepsilon(z) =2i\partial_{\overline{z}}\psi^\varepsilon(z)$ to rewrite
\begin{align*}
  \partial_\theta\Psi^\varepsilon(\varepsilon b_j\phi_j(w)+z_j)&=2\varepsilon b_j\, \Re\Big\{\partial_{\overline{z}}\Psi^\varepsilon\big(\varepsilon b_j\phi_j(w)+z_j\big)\partial_\theta \overline{\phi_j(w)}\Big\}
  \\ &=-\varepsilon b_j\, \Re\Big\{2i\partial_{\overline{z}}\Psi^\varepsilon\big(\varepsilon b_j\phi_j(w)+z_j\big)\overline{w}\overline{\phi'_j(w)}\Big\}\\ 
  &=-\varepsilon b_j\, \Re\Big\{\Big(-i\Omega\big(\varepsilon b_j\phi_j(w)+z_j\big)-iU+v^\varepsilon\big(\varepsilon b_j\phi_j(w)+z_j\big)\Big)\overline{w}\overline{\phi'_j(w)}\Big\}\\ 
  &=-\varepsilon b_j\,\Im\Big\{\Big(\Omega\big(\varepsilon b_j\phi_j(w)+z_j\big)+U+iv^\varepsilon\big(\varepsilon b_j\phi_j(w)+z_j\big)\Big) \overline{w}\overline{\phi'_j(w)}\Big\}\\ 
  &=-\varepsilon b_j\, \mathcal{G}^\alpha_j(\varepsilon,f;\lambda)(w).
\end{align*}
We can now prove the following lemma.
\begin{lemma}\label{coro:idG}
  Let $(\varepsilon,f,\lambda)\in(-\varepsilon_0,\varepsilon_0)\times B_0^\alpha\times \Lambda$.
  Then, the following identities hold:
  \begin{enumerate}[label=\rm(\roman*)]
  \item If $\Omega=0$ then
    \begin{align}
      \label{eqn:Gident:trans}
      \begin{aligned}
        \Re\Big[\sum_{j=1}^N\frac{\gamma_j}{\pi}\int_{\mathbb{T}}\mathcal{G}^\alpha_j(\varepsilon,f;\lambda)(w)\big(1+\varepsilon b_jf_j(w)\overline{w}\big)dw\Big]&={U}\sum_{j=1}^N  \gamma_j\Big(1+\varepsilon^2b_j^2 \fint_{\mathbb{T}}\overline{f_j(w)}f'_j(w)dw\Big),\\
        \Im\Big[\sum_{j=1}^N\frac{\gamma_j}{\pi}\int_{\mathbb{T}}\mathcal{G}^\alpha_j(\varepsilon,f;\lambda)(w)\big(1+\varepsilon b_jf_j(w)\overline{w}\big)dw\Big]&=0.
      \end{aligned}
    \end{align}
  \item If $U=0$ then
    \begin{align}
      \label{eqn:Gident:rot}
      \sum_{j=1}^N\frac{\gamma_j}{i\pi}\int_{\mathbb{T}}\mathcal{G}^\alpha_j(\varepsilon,f;\lambda)(w) \Big(\varepsilon b_j|w+\varepsilon b_j f_j(w)|^2+\varepsilon b_j\Re\big[\overline{z_j}f_j(w)\big]+\Re[\overline{z_j}w]\Big)\overline{w}dw=0.
    \end{align}
  \end{enumerate}
\end{lemma}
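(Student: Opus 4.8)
The plan is to exploit the pointwise identity \eqref{psiUG}, namely $\partial_\theta\Psi^\varepsilon(\varepsilon b_j\phi_j(w)+z_j)=-\varepsilon b_j\,\mathcal{G}^\alpha_j(\varepsilon,f;\lambda)(w)$ with $w=e^{i\theta}$, to trade the functional $\mathcal{G}^\alpha_j$ for a $\theta$-derivative of the relative stream function. After integrating by parts in $\theta$ (no boundary terms, by $2\pi$-periodicity), each integral of $\mathcal{G}^\alpha_j$ against a weight becomes an integral of $\Psi^\varepsilon$ against the $\theta$-derivative of that weight, which I will arrange to be a genuine contour integral over $\partial\mathcal{D}^\varepsilon_j$. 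I then split $\Psi^\varepsilon=-\tfrac12\Omega|z|^2-\tfrac12 U\overline z+\psi^\varepsilon$ from \eqref{relative-stream-function}: the $\psi^\varepsilon$-contribution will be annihilated by Lemma~\ref{lem:idens}, while the remaining polynomial contribution is evaluated explicitly by the complex Green's theorem \eqref{Stokes Thm}.

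For part (i), with $\Omega=0$, the key observation is that on $\mathbb{T}$ one has $(1+\varepsilon b_jf_j(w)\overline w)\,w=w+\varepsilon b_jf_j(w)=\phi_j(w)$, so that $dw=iw\,d\theta$ gives $\int_{\mathbb{T}}\mathcal{G}^\alpha_j(1+\varepsilon b_jf_j\overline w)\,dw=i\int_0^{2\pi}\mathcal{G}^\alpha_j\,\phi_j\,d\theta$. Substituting \eqref{psiUG}, integrating by parts, and using $\partial_\theta\phi_j=\tfrac1{\varepsilon b_j}\partial_\theta z$ with $z=\varepsilon b_j\phi_j+z_j$, this becomes $\tfrac{i}{\varepsilon^2 b_j^2}\int_{\partial\mathcal{D}^\varepsilon_j}\Psi^\varepsilon\,dz$. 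Summing against $\gamma_j/\pi$ and inserting $\Psi^\varepsilon=-\tfrac12U\overline z+\psi^\varepsilon$, the $\psi^\varepsilon$-term vanishes by Lemma~\ref{lem:idens}(i), while \eqref{Stokes Thm} yields $\int_{\partial\mathcal{D}^\varepsilon_j}\overline z\,dz=2i|\mathcal{D}^\varepsilon_j|=2i\varepsilon^2 b_j^2|\mathcal{O}^\varepsilon_j|$. The area expansion $|\mathcal{O}^\varepsilon_j|=\pi\bigl(1+\varepsilon^2 b_j^2\fint_{\mathbb{T}}\overline{f_j}f_j'\,dw\bigr)$, read off from $\phi_j=w+\varepsilon b_jf_j$, then produces exactly the right-hand side of \eqref{eqn:Gident:trans}; since the resulting number is real, both stated identities (its real part and the vanishing of its imaginary part) follow immediately.

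For part (ii), with $U=0$, I would rewrite the bracketed weight, after the factor $\overline w$ and the substitution $z=\varepsilon b_j\phi_j+z_j$, as an affine function of $|z|^2$, engineered so that its $\theta$-derivative collapses to $\tfrac{1}{2\varepsilon b_j}\partial_\theta|z|^2$. Using $\overline w\,dw=i\,d\theta$, integration by parts, the reality of $\Psi^\varepsilon$ when $U=0$, and $\Re(\overline z\,\partial_\theta z)=\tfrac12\partial_\theta|z|^2$, the whole sum reduces to $\tfrac1\pi\sum_j\tfrac{\gamma_j}{\varepsilon^2 b_j^2}\Re\int_{\partial\mathcal{D}^\varepsilon_j}\overline z\,\Psi^\varepsilon\,dz$. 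Inserting $\Psi^\varepsilon=-\tfrac12\Omega|z|^2+\psi^\varepsilon$, the $\psi^\varepsilon$-part is precisely Lemma~\ref{lem:idens}(ii) and vanishes, while for the $\Omega$-part the complex Green's theorem gives $\int_{\partial\mathcal{D}^\varepsilon_j}\overline z\,|z|^2\,dz=4i\int_{\mathcal{D}^\varepsilon_j}|z|^2\,dA$, which is purely imaginary and so contributes nothing to the real part. This establishes \eqref{eqn:Gident:rot}.

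The main obstacle I anticipate is the algebraic bookkeeping that identifies each bracketed weight with the correct polynomial in $z$ on the boundary: concretely, checking that $\partial_\theta(\text{weight})$ reduces cleanly to $\tfrac1{\varepsilon b_j}\partial_\theta z$ in case (i) and to $\tfrac{1}{2\varepsilon b_j}\partial_\theta|z|^2$ in case (ii). This is delicate because the coefficients of the $|w+\varepsilon b_jf_j|^2$-type terms must be exactly right for the $\theta$-derivative to collapse, and any spurious term of the form $\partial_\theta|\phi_j|^2$ would survive and produce contributions (involving $\overline{z_j}\int_{\partial\mathcal{D}^\varepsilon_j}\psi^\varepsilon\,dz$) that are not controlled by Lemma~\ref{lem:idens}. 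A secondary, more conceptual point is that Lemma~\ref{lem:idens}(ii) is a \emph{scalar} (real) identity, so I must carefully separate real and imaginary parts and use the reality of $\psi^\varepsilon$ to bring the boundary integrals into the precise form it supplies; the vanishing of the polynomial remainders then rests only on the elementary facts that $\int_{\partial D}\overline z\,dz$ is a multiple of the area and $\int_{\partial D}\overline z\,|z|^2\,dz$ is purely imaginary. Throughout, these identities hold for arbitrary $(\varepsilon,f,\lambda)$ and use nothing about the patch being steady, which is exactly what makes them serviceable as solvability conditions later.
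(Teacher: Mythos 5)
Your part (i) is correct and is essentially the paper's own proof: the chain \eqref{psiUG} $\to$ integration by parts $\to$ Lemma~\ref{lem:idens}(i) $\to$ Green's theorem is exactly the paper's argument, only run in the opposite direction (the paper starts from the stream-function identity and arrives at the $\mathcal G^\alpha_j$-integral), and your identification of $1+\varepsilon^2 b_j^2\fint_{\mathbb T}\overline{f_j}f_j'\,dw$ with $|\mathcal O_j^\varepsilon|/\pi$ is an equivalent substitute for the paper's direct check that this quantity is real. (Both arguments divide by $\varepsilon$, so the case $\varepsilon=0$ must be recovered by continuity, as the paper notes.)

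Part (ii), however, does not close as written. Your pivotal claim is that the bracketed weight is, up to an additive constant, $\tfrac{1}{2\varepsilon b_j}|z|^2$ with $z=\varepsilon b_j\phi_j(w)+z_j$, $\phi_j=w+\varepsilon b_jf_j$, so that its $\theta$-derivative collapses to $\tfrac{1}{2\varepsilon b_j}\partial_\theta|z|^2$. For the weight as stated this is false: the weight equals $\varepsilon b_j|\phi_j|^2+\Re[\overline{z_j}\phi_j]$, and
\begin{equation*}
  \varepsilon b_j|\phi_j(w)|^2+\Re\big[\overline{z_j}\phi_j(w)\big]
  =\frac{1}{2\varepsilon b_j}|z|^2+\frac{\varepsilon b_j}{2}|\phi_j(w)|^2-\frac{|z_j|^2}{2\varepsilon b_j},
\end{equation*}
so the $\theta$-derivative of the weight is $\tfrac{1}{2\varepsilon b_j}\partial_\theta|z|^2+\tfrac{\varepsilon b_j}{2}\partial_\theta|\phi_j(w)|^2$; the spurious $\partial_\theta|\phi_j|^2$ term that you yourself flagged as the failure mode really is present. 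Carried through your argument, it produces the extra contribution
\begin{equation*}
  -\frac{1}{\pi}\,\Re\Big[\sum_{j=1}^N\frac{\gamma_j}{\varepsilon^2b_j^2}\,\overline{z_j}\int_{\partial\mathcal{D}_j^\varepsilon}\Psi^\varepsilon(z)\,dz\Big],
\end{equation*}
which Lemma~\ref{lem:idens} does not control and which is generically nonzero: already for $N=1$, $U=0$, $\Omega\neq 0$ and $z_1\neq 0$ (admissible, since $\Lambda$ is an open neighborhood of the solution, which has $z_1^*=0$) it equals $-\frac{\gamma_1\Omega|\mathcal D_1^\varepsilon|}{\pi\varepsilon b_1}\Im[\overline{z_1}\,m_1]$, where $m_1$ is the centroid of $\mathcal O_1^\varepsilon$; taking for instance $f_1=t(\overline w+i\,\overline w^2)$ with $t\neq0$ small and $z_1$ real makes this nonzero. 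The collapse you need occurs precisely when the coefficient of $|w+\varepsilon b_jf_j(w)|^2$ is $\varepsilon b_j/2$, not $\varepsilon b_j$.

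In fairness, this gap is inherited from the paper itself: in \eqref{id-psi221} the paper rewrites $\Re[(\varepsilon b_j\overline{\phi_j}+\overline{z_j})\partial_\theta\phi_j]$ as $\partial_\theta(\varepsilon b_j|\phi_j|^2+\Re[\overline{z_j}\phi_j])$, dropping the factor $\tfrac12$ coming from $\Re[\overline{\phi_j}\partial_\theta\phi_j]=\tfrac12\partial_\theta|\phi_j|^2$. The identity \eqref{eqn:Gident:rot} is true, and both your argument and the paper's become complete and in fact coincide, once $\varepsilon b_j|w+\varepsilon b_jf_j(w)|^2$ is replaced by $\tfrac{\varepsilon b_j}{2}|w+\varepsilon b_jf_j(w)|^2$: both then reduce (ii) to $\Re\sum_j\frac{\gamma_j}{\varepsilon^2b_j^2}\int_{\partial\mathcal D_j^\varepsilon}\overline z\,\Psi^\varepsilon\,dz=0$, which follows from Lemma~\ref{lem:idens}(ii) together with the fact that $\int_{\partial\mathcal D_j^\varepsilon}\overline z\,|z|^2\,dz$ is purely imaginary. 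The same replacement must then be made in \eqref{PHI2} and in $\widetilde\Phi$; this is harmless for Theorems~\ref{prop:ift} and \ref{theorem-Phi-stationary}, since the range computations \eqref{condition 3} and \eqref{condition 32} only involve the weight at $(\varepsilon,f)=(0,0)$, where both versions reduce to $\Re[\overline{z_j}w]$.
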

%%%%%%%%%%%%%%%%%%%%%%%%%%%%%%%%%%%%%%%%%%%%%%%%%%%%%%
\begin{proof}
  By continuity, it is enough to consider $\varepsilon \ne 0$, and thanks to the $\varepsilon \mapsto -\varepsilon$ symmetry in \eqref{g-eps}, we can further restrict to $\varepsilon > 0$.

  We shall first prove {\rm (i)}.
  Averaging \eqref{relative-stream-function} over the boundaries, with $\Omega=0$, then summing and using Lemma~\ref{lem:idens} yields
  $$
  \sum_{j=1}^N\frac{\gamma_j}{\varepsilon^2 b_j^2}\int_{\partial \mathcal{D}_j^\varepsilon}\Psi^\varepsilon(z)dz=
  -\frac{U}{2}\sum_{j=1}^N\frac{\gamma_j}{\varepsilon^2 b_j^2}\int_{\partial \mathcal{D}_j^\varepsilon}\overline{z}dz.
  $$
  In view of  \eqref{Dj} and \eqref{conf0}, by making  simple changes of variables we get
  \begin{equation}\label{id-psi-mass}
    \sum_{j=1}^N\frac{\gamma_j}{\varepsilon b_j} \int_{ \mathbb{T}}\Psi^\varepsilon\big({\varepsilon b_j}\overline{\phi_j(w)}+\overline{z_j}\big)\phi'_j(w)dw=- \frac{U}{2}\sum_{j=1}^N\frac{\gamma_j}{\varepsilon b_j}\int_{ \mathbb{T}}\big({\varepsilon b_j}\overline{\phi_j(w)}+\overline{z_j}\big)\phi'_j(w)dw.
  \end{equation}
  Writing $w\in \mathbb{T}$ as $w=e^{i\theta}$ then integrating by parts and using \eqref{psiUG},  the left hand side of \eqref{id-psi-mass} becomes
  \begin{align}
    \nonumber \sum_{j=1}^N\frac{\gamma_j}{\varepsilon b_j} \int_{ \mathbb{T}}\Psi^\varepsilon\big({\varepsilon b_j}\overline{\phi_j(w)}+\overline{z_j}\big)\phi'_j(w)dw&=\sum_{j=1}^N \frac{\gamma_j}{\varepsilon b_j}\int_{0}^{2\pi}\Psi^\varepsilon(\varepsilon b_j\phi_j(w)+z_j)\partial_\theta\phi_j(w)d\theta
    \\ \nonumber &=-\sum_{j=1}^N\frac{\gamma_j}{\varepsilon b_j} \int_{0}^{2\pi} \partial_\theta \Psi^\varepsilon(\varepsilon b_j\phi_j(w)+z_j)\phi_j(w)d\theta
    \\ &=-i\sum_{j=1}^N\gamma_j\int_{\mathbb{T}}\mathcal{G}^\alpha_j(\varepsilon,f;\lambda)(w)\phi_j(w)\overline{w}dw.\label{Id-int-psi}
  \end{align}
  Combining \eqref{id-psi-mass}--\eqref{Id-int-psi} and using the  simple  identity 
  \begin{align*}
    \frac{1}{\varepsilon b_j}\int_{ \mathbb{T}}\big({\varepsilon b_j}\overline{\phi_j(w)}+\overline{z_j}\big)\phi'_j(w)dw
    =2\pi i+\varepsilon^2 b^2 \int_{ \mathbb{T}}\overline{f_j(w)}f'_j(w)dw
  \end{align*} 
  we conclude that
  \begin{align*}
    \sum_{j=1}^N\frac{\gamma_j}{\pi}\int_{\mathbb{T}}\mathcal{G}^\alpha_j(\varepsilon,f;\lambda)(w)\big(1+\varepsilon b_jf_j(w)\overline{w}\big)dw&={U}\sum_{j=1}^N  \gamma_j\Big(1+\varepsilon^2b_j^2 \fint_{\mathbb{T}}\overline{f_j(w)}f'_j(w)dw\Big).
  \end{align*}
  Then {\rm (i)} follows  from  the identity
  $$
  \fint_{\mathbb{T}}\overline{f_j(w)}f'_j(w)dw-\overline{\fint_{\mathbb{T}}\overline{f_j(w)}f'_j(w)dw}=\frac{1}{2\pi i}\fint_0^{2\pi}\partial_\theta|{f_j(w)}|^2d\theta=0.
  $$ 

  As for {\rm (ii)}, integrating \eqref{relative-stream-function} with $U=0$ over the boundaries then summing and taking the real part we obtain, by virtue of Lemma~\ref{lem:idens},
  $$
  \Re\Big[\sum_{j=1}^N\frac{\gamma_j}{\varepsilon^2 b_j^2}\int_{\partial \mathcal{D}_j^\varepsilon}\overline{z}\Psi^\varepsilon(z)dz\Big]=
  -\frac{\Omega}{2}\, \Re\Big[\sum_{j=1}^N\frac{\gamma_j}{\varepsilon^2 b_j^2}\int_{\partial \mathcal{D}_j^\varepsilon}\overline{z}|{z}|^2dz\Big].
  $$
  According to \eqref{Stokes Thm}, the quantity 
  $$
  \sum_{j=1}^N\frac{\gamma_j}{\varepsilon^2 b_j^2}\int_{\partial \mathcal{D}_j^\varepsilon}\overline{z}|{z}|^2dz=4i\sum_{j=1}^N\frac{\gamma_j}{\varepsilon^2 b_j^2}\int_{ \mathcal{D}_j^\varepsilon}|{z}|^2dA(z)
  $$
  is purely imaginary. It follows that
  $$
  \Re\Big[\sum_{j=1}^N\frac{\gamma_j}{\varepsilon^2 b_j^2}\int_{\partial \mathcal{D}_j^\varepsilon}\overline{z}\Psi^\varepsilon(z)dz\Big]=0.$$
  Then, in view of  \eqref{Dj} and  \eqref{conf0}, by  making  simple changes of variables we get
  \begin{equation}\label{id-psi-mass22}
    \begin{aligned}
      \sum_{j=1}^N& \frac{\gamma_j}{\varepsilon b_j} \mathcal{A}_j:=\sum_{j=1}^N& \frac{\gamma_j}{\varepsilon b_j}  \Re\Big[ \int_{\mathbb{T}}\big(\varepsilon b_j\overline{\phi_j(w)}+\overline{z_j}\big)\Psi^\varepsilon\big(\varepsilon b_j\phi_j(w)+z_j\big)\phi'_j(w)dw\Big]=0.
    \end{aligned}
  \end{equation}
  Writing $w\in \mathbb{T}$  as $w=e^{i\theta}$   then integrating by parts and using \eqref{psiUG} we find
  \begin{align}
    \mathcal{A}_j &= \nonumber  \int_{0}^{2\pi}\Re\Big[\big(\varepsilon b_j\overline{\phi_j(w)}+\overline{z_j}\big)\partial_\theta\phi_j(w)\Big]\Psi^\varepsilon\big(\varepsilon b_j\phi_j(w)+z_j\big) d\theta\\ &=
    \nonumber  \int_{0}^{2\pi}\partial_\theta\Big(\varepsilon b_j|\phi_j(w)|^2+\Re\big[\overline{z_j}\phi_j(w)\big]\Big)\Psi^\varepsilon\big(\varepsilon b_j\phi_j(w)+z_j\big) d\theta \\&= \nonumber 
    - \int_{0}^{2\pi} \Big(\varepsilon b_j|\phi_j(w)|^2+\Re\big[\overline{z_j}\phi_j(w)\big]\Big)\partial_\theta\Big[\Psi^\varepsilon\big(\varepsilon b_j\phi_j(w)+z_j\big)\Big]d\theta
     \\ &=  -i\varepsilon b_j\int_{\mathbb{T}}\Big(\varepsilon b_j|\phi_j(w)|^2+\Re\big\{\overline{z_j}\phi_j(w)\big\}\Big)\mathcal{G}^\alpha_j(\varepsilon,f;\lambda)(w) \overline{w}dw.\label{id-psi221}
  \end{align}
  Combining \eqref{id-psi-mass22} and \eqref{id-psi221} gives the desired result.
\end{proof}

\begin{remark}\label{remark-pv-identity}
  Substituting $(\varepsilon,f)=(0,0)$  in the identities of Lemma~\ref{coro:idG} and using Proposition~\ref{proposition2}{\rm(ii)} we obtain well known identities for point vortices: If $\Omega=0$ we have
  \begin{align*}
    \sum_{j=1}^N {\gamma_j} \mathcal{P}_j^\alpha(\lambda) &={U}\sum_{j=1}^N  \gamma_j
  \end{align*}
  and if $U=0$ we get
  \begin{align*}
    \Im \Big\{ \sum_{j=1}^N{\gamma_j} \mathcal{P}_j^\alpha(\lambda) \overline{z_j} \Big\}=0.
  \end{align*}
  See, for instance, \cite{oneil:stationary}. These identities can be seen as coming from the symmetries of the Hamiltonian system \eqref{ode-sys0} under translations and rotations.
\end{remark}

\subsection{An abstract lemma}\label{sec:abstract}
From Proposition~\ref{proposition2}, we know that the operator $D_f \mathcal G^\alpha(0,0;\lambda^*)$ is not onto. As discussed in Remark~\ref{remark-23}, the linearized operator $D_{(f;\lambda)} \mathcal G^\alpha(0,0;\lambda^*)$ is also not onto. Thankfully, we will be able to ``explain'' this latter degeneracy using the nonlinear identities in Lemma~\ref{coro:idG}. More precisely, for non-degenerate equilibria, we will be able to apply the following mild generalization of the usual implicit function theorem.
\begin{lemma}\label{abstract-lemma}
  Let $G  \colon U\times V \to Z$ and $F \colon Z\times U\times V \to\mathbb{R}^n$ be $C^1$ mappings satisfying 
  \begin{align}
    \label{eqn:G0}
    G(0,0)&=0,\\
    \label{eqn:F0}
    F(0,x,y)&=0,\\
    \label{eqn:FG}
    F(G(x,y),x,y)&=0,
  \end{align}
  for all $(x,y) \in U \by V$, where $X,Y,Z$ are Banach spaces and $U \subset X$ and $V \subset Y$ are open sets containing the origin. If the linearizations of these mappings at the origin satisfy 
  \begin{align}
    \label{eqn:kerG}
    \ker D_y G(0,0)&=\{ 0\},\\
    \label{eqn:ranG}
    \codim \ran D_y G(0,0) &= n,\\
    \label{eqn:ranF}
    \ran D_z F(0,0,0) &= \mathbb{R}^n,
  \end{align}
  then there exists a neighborhood $\tilde U\times \tilde V$ of the origin in $U \by V$ and a $C^1$ mapping $g \maps  \tilde U \to \tilde V$ such that
  \begin{align}
    \notag
    g(0)&=0,\\
    \label{eqn:g}
    G(x,g(x))&=0\quad \textnormal{for all }x\in \tilde U.
  \end{align}
  Moreover, every solution of $G(x,y)=0$ in $\tilde{U}\times \tilde{V}$ is of the form $(x,g(x))$, and the operator $D_x g(0)$ is uniquely determined by the equation
  \begin{align*}
    D_x G(0,0) + D_y G(0,0) D_x g(0) &= 0
  \end{align*}
  obtained by implicitly differentiating \eqref{eqn:g}.
  \begin{proof}
    We first claim that the linearizations of $G$ and $F$ at the origins in $U \by V$ and $Z \by U \by V$ satisfy
    \begin{align}
      \label{eqn:exact}
      \ran D_y G(0) = \ker D_z F(0).
    \end{align}
    To see this, we differentiate \eqref{eqn:F0} and \eqref{eqn:FG} with respect to $x$ and $y$ to find
    \begin{align*}
      D_x F(0) = 0,
      \quad 
      D_y F(0) = 0,
      \quad 
      D_z F(0) D_x G(0) + D_x F(0) = 0,
      \quad 
      D_z F(0) D_y G(0) + D_y F(0) = 0.
    \end{align*}
    This in particular implies that $D_z F(0) D_yG(0)=0$, and hence $\ran D_y G(0) \subset \ker D_z F(0)$. As \eqref{eqn:ranF} and \eqref{eqn:ranG} force $\codim \ker D_z F(0) = \codim \ran D_y F(0) = n$, the only possibility is that the two spaces are equal and \eqref{eqn:exact} holds.

    Again using \eqref{eqn:ranF}, we know that there is an $n$-dimensional subspace $Z_1 \sub Z$ such that $D_z F(0)$ restricts to an invertible map $Z_1 \to \R^n$. Moreover, by \eqref{eqn:exact} we have 
    \begin{align}
      \label{eqn:splitting}
      Z = Z_1 \oplus \ker D_z F(0) = Z_1 \oplus \ran G_y(0).
    \end{align}
    Consider the augmented mapping
    \begin{align*}
      H \colon Z_1 \by U \by V \to Z,
      \qquad 
      H(z_1,x,y)=G(x,y)-z_1.
    \end{align*}
    The linearized operator $D_{(z_1,y)} H(0) \colon Z_1 \by Y \to Z$, given by
    \begin{align}
      \label{eqn:DH}
      D_{(z_1,y)} H(0) 
      \begin{pmatrix}
         z_1 \\  y 
      \end{pmatrix}
      = 
      D_y G(0) y - z_1,
    \end{align}
    is invertible. Indeed, it is onto by \eqref{eqn:splitting}. If $(z_1, y)$ lies in its kernel, applying $D_z F(0)$ to \eqref{eqn:DH} and using the definition of $Z_1$ \eqref{eqn:exact} yields $z_1 = 0$, at which point $y = 0$ follows from \eqref{eqn:kerG}. Thus, by the implicit function theorem there is a neighborhood $\tilde Z_1 \times \tilde U \times \tilde V$ of the origin in $Z_1 \times U \times V$ and $C^1$ mappings $(h,g) \maps \tilde U \to \tilde Z_1 \times \tilde V$ satisfying
    \begin{align}
      \notag
      (h(0),g(0)) &= 0, \\
      \label{eqn:h}
      H(h(x),x,g(x)) = G(x,g(x)) -h(x) &= 0 \quad \textnormal{for all }x\in \tilde U.
    \end{align}
    Moreover, all solutions of $H(z,x,y)=0$ in $\tilde Z_1 \times \tilde U \times \tilde V$ are of the form $(h(x),x,g(x))$, and the linearizations of $(h,g)$ at the origin are uniquely determined by
    \begin{align*}
      0 &= D_{z_1} H(0) D_x h(0) + D_x H(0) + D_y H(0) D_x g(0)\\
      &= -D_x h(0) + D_x G(0) + D_y G(0) D_x g(0).
    \end{align*}

    The proof will therefore be complete if we can show that $h \equiv 0$, for which will need the full force of the nonlinear conditions \eqref{eqn:F0} and \eqref{eqn:FG} and not just their linearizations at the origin. Consider the restriction
    \begin{align*}
      F_1 = F\big|_{Z_1 \times U \times V} 
      \colon Z_1 \times U \times V \longrightarrow \mathbb R^n.
    \end{align*}
    By our choice of $Z_1$, the linearized operator $D_{z_1} F_1(0) = D_z F(0)|_{Z_1}$ is invertible. Applying the implicit function theorem and using \eqref{eqn:F0} we conclude that, possibly after shrinking $\tilde Z_1 \times \tilde U \times \tilde V$, all solutions of $F_1(z_1,x,y)=0$ in $\tilde Z_1 \times \tilde U \times \tilde V$ are of the form $(0,x,y)$. By \eqref{eqn:FG} and \eqref{eqn:h} we have
    \begin{align*}
      0 = F(G(x,g(x)),x,g(x))
      = F(h(x),x,g(x))
      = F_1(h(x),x,g(x)),
    \end{align*}
    for all $x \in \tilde U$, and so this forces $h \equiv 0$ as desired.
  \end{proof}
\end{lemma}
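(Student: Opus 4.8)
The plan is to reduce the statement to the classical implicit function theorem by first compensating for the failure of $D_y G(0,0)$ to be surjective, and then invoking the full nonlinear structure of $F$ to remove the compensating term. The guiding principle is that the hypotheses on $F$ encode \emph{exactly} the $n$ missing directions in the range of $D_y G(0,0)$.

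First I would extract the key linear-algebraic consequence of the assumptions on $F$. Differentiating \eqref{eqn:F0} and \eqref{eqn:FG} at the origin yields $D_z F(0) D_y G(0) = 0$, so that $\ran D_y G(0) \subseteq \ker D_z F(0)$. Since \eqref{eqn:ranF} forces $\codim \ker D_z F(0) = n$, which matches $\codim \ran D_y G(0) = n$ from \eqref{eqn:ranG}, the two subspaces must coincide. This exactness relation $\ran D_y G(0) = \ker D_z F(0)$ is the conceptual heart of the argument: it identifies a complement of $\ran D_y G(0)$ in $Z$ with a subspace on which $D_z F(0)$ is invertible.

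Next I would fix an $n$-dimensional subspace $Z_1 \subset Z$ on which $D_z F(0)$ restricts to an isomorphism onto $\mathbb{R}^n$, which exists by \eqref{eqn:ranF}, so that the exactness relation gives the splitting $Z = Z_1 \oplus \ran D_y G(0)$. I would then augment the unknowns by $z_1 \in Z_1$ and study the map $H(z_1,x,y) = G(x,y) - z_1$. Its partial linearization $(z_1,y) \mapsto D_y G(0)y - z_1$ is a bijection $Z_1 \times Y \to Z$: surjectivity is immediate from the splitting, while injectivity follows by applying $D_z F(0)$ to any kernel element to kill the $D_y G(0)y$ term (forcing $z_1 = 0$) and then using \eqref{eqn:kerG}. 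The ordinary implicit function theorem then produces $C^1$ maps $h \maps \tilde U \to Z_1$ and $g \maps \tilde U \to \tilde V$ with $(h(0),g(0)) = 0$ and $G(x,g(x)) = h(x)$, along with local uniqueness of small solutions of $H = 0$ and derivative formulas obtained by implicit differentiation.

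The main obstacle, and the only step that uses more than the linearizations at the origin, is showing that the spurious term $h$ vanishes identically. For this I would restrict $F$ to the finite-dimensional slice $Z_1 \times U \times V$, giving $F_1 \maps Z_1 \times U \times V \to \mathbb{R}^n$ whose partial derivative $D_{z_1} F_1(0) = D_z F(0)|_{Z_1}$ is invertible by construction. Since $F_1(0,x,y) = 0$ by \eqref{eqn:F0}, a further application of the implicit function theorem shows that, after shrinking the neighborhood, the only small zeros of $F_1$ have $z_1 = 0$. But evaluating the nonlinear constraint \eqref{eqn:FG} along the solution gives $F_1(h(x),x,g(x)) = F(G(x,g(x)),x,g(x)) = 0$, so this uniqueness forces $h \equiv 0$. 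Hence $G(x,g(x)) = 0$, and the formula for $D_x g(0)$ is read off from the implicit differentiation already carried out, completing the proof.
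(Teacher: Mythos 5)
Your proposal is correct and follows essentially the same argument as the paper's proof: the exactness relation $\ran D_y G(0) = \ker D_z F(0)$, the splitting $Z = Z_1 \oplus \ran D_y G(0)$, the augmented map $H(z_1,x,y) = G(x,y) - z_1$ solved by the implicit function theorem, and the elimination of $h$ via the restricted map $F_1$ and the nonlinear identity \eqref{eqn:FG}. All steps, including the injectivity argument for the linearization of $H$ and the finite-dimensional uniqueness argument forcing $h \equiv 0$, match the paper's proof.
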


\subsection{Existence of  vortex patch equilibria}\label{sec:proof}
We shall give in this subsection a detailed statement of Theorem \ref{thm:general}, as well as a proof based on Lemma~\ref{abstract-lemma}. The stationary case is more degenerate than the rigid motion case due to the additional symmetries in the problem, and so we will treat it separately.

For rigidly rotating or translating vortex patch solutions, our main result is the following.
\begin{theorem} \label{prop:ift}
Let $\alpha\in[0,1)$ and let $\lambda^*$ be a non-degenerate solution, in the sense of Definition~\ref{def:non-deg}{\rm (i)}, to the $N$-vortex problem \eqref{alg-sysP}, with one of $\Omega,U$ nonzero.
Then the following hold true.
\begin{enumerate}[label=\rm(\roman*)]
\item There exists $\varepsilon_1>0$ and a unique $C^1$ function $(f,\lambda_1)\colon (-\varepsilon_1,\varepsilon_1)\longrightarrow B_1^\alpha \times \R^{2N-1}$ satisfying
\begin{equation}\label{sol_g-alpha}
\mathcal{G}^\alpha \big(\varepsilon,f(\varepsilon); \lambda_1(\varepsilon), \lambda_2^*\big)=0,
\end{equation}
with $\lambda_1(\varepsilon)=\lambda_1^* +o(\varepsilon)$ and 
 $$
 f_j(\varepsilon,w)=\varepsilon {b_j}\Xi_\alpha\sum_{k=1, k\neq j}^N\frac{\gamma_k}{\gamma_j} \frac{(\overline{z_k}-\overline{z_j})^2}{ |z_k -z_j|^{\alpha+4}} \overline w
+o(\varepsilon), \quad \Xi_\alpha:=\frac{(\alpha+2)\Gamma(1-\frac\alpha2)\Gamma(3-\frac\alpha2)}{4\Gamma(2-\alpha)}.
  $$ 
\item These solutions enjoy the symmetries
  \begin{align*}
    f(-\varepsilon)(w) = f(\varepsilon)(-w),
    \qquad 
    \lambda_1(-\varepsilon) = \lambda_1(\varepsilon).
  \end{align*}
\item For all  $\varepsilon\in (-\varepsilon_1,\varepsilon_1)\setminus\{0\}$  the domains $\mathcal{O}_j^\varepsilon$, whose boundaries are given by the conformal parametrizations $\phi_j^\varepsilon=\operatorname{Id}+\varepsilon|\varepsilon|^\alpha b_j^{1+\alpha} f_j\colon\mathbb{T}\to \partial \mathcal{O}_j^\varepsilon$,   are
 strictly convex.

\end{enumerate}
\end{theorem}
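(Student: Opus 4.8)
The plan is to derive (i) from the abstract implicit function theorem of Lemma~\ref{abstract-lemma}, to obtain (ii) from uniqueness together with a reflection symmetry of the functional, and to settle (iii) by a curvature estimate. The rotating and translating cases run in parallel, the only difference being which scalar identity of Lemma~\ref{coro:idG} is used.

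\emph{Setup for Lemma~\ref{abstract-lemma}.} I write $\lambda_1=\lambda_1^*+\tilde\lambda_1$ and take $x=\varepsilon$, $y=(f,\tilde\lambda_1)$, $Z=\mathcal{W}_0^\alpha$, and $G(\varepsilon,(f,\tilde\lambda_1)):=\mathcal{G}^\alpha(\varepsilon,f;\lambda_1^*+\tilde\lambda_1,\lambda_2^*)$. By Proposition~\ref{proposition2}(i) the map $G$ is $C^1$, and $G(0,0)=\mathcal{G}^\alpha(0,0;\lambda^*)=0$ since $\lambda^*$ solves \eqref{alg-sysP}. For the scalar constraint ($n=1$) I take $F$ to be the quantity whose vanishing is asserted by Lemma~\ref{coro:idG}; in the rotating case
\[
F(g,\varepsilon,(f,\tilde\lambda_1)):=\sum_{j=1}^N\frac{\gamma_j}{i\pi}\int_{\mathbb{T}}g_j(w)\Big(\varepsilon b_j|w+\varepsilon b_jf_j(w)|^2+\varepsilon b_j\Re[\overline{z_j}f_j(w)]+\Re[\overline{z_j}w]\Big)\overline{w}\,dw,
\]
and in the translating case the imaginary part of the analogous expression from Lemma~\ref{coro:idG}(i). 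Then $F(0,\cdot,\cdot)\equiv0$ trivially (so \eqref{eqn:F0} holds) and $F(G(\varepsilon,y),\varepsilon,y)\equiv0$ is precisely Lemma~\ref{coro:idG} (so \eqref{eqn:FG} holds).

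\emph{Verifying the rank conditions.} The linearization $D_yG(0,0)(h,\nu)=D_f\mathcal{G}^\alpha(0,0;\lambda^*)h+D_{\lambda_1}\mathcal{G}^\alpha(0,0;\lambda^*)\nu$ splits cleanly along Fourier modes: by Proposition~\ref{proposition2}(iv) the first summand is an isomorphism onto $\widetilde{\mathcal{W}}_0^\alpha$, which carries no first harmonic, while by Proposition~\ref{proposition2}(ii) the second equals $\big(\Im\{(D_{\lambda_1}\mathcal{P}^\alpha(\lambda^*)\nu)_j\overline{w}\}\big)_j$, a pure first harmonic. Since $\widetilde{\mathcal{W}}_0^\alpha$ has codimension $2N$ and, by Definition~\ref{def:non-deg}(i), $D_{\lambda_1}\mathcal{P}^\alpha(\lambda^*)$ is injective of rank $2N-1$, the two summands meet only in $0$; hence $\ker D_yG(0,0)=\{0\}$ and $\codim\ran D_yG(0,0)=2N-(2N-1)=1$, giving \eqref{eqn:kerG}--\eqref{eqn:ranG}. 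For \eqref{eqn:ranF}, the elementary identity $\int_{\mathbb{T}}\Im\{c\overline w\}\,\Re[\overline z w]\,\overline w\,dw=\pi i\,\Im\{c\overline z\}$ gives $D_gF(0,0,0)[\dot g]=\sum_j\gamma_j^*\,\Im\{c_j\overline{z_j^*}\}$ for $\dot g_j=\Im\{c_j\overline w\}$, which is nonzero for any genuine configuration (choose $c_{j_0}=iz_{j_0}^*$ for a single $j_0$ with $z_{j_0}^*\neq0$ and the remaining $c_j=0$, recalling $\gamma_{j_0}^*\neq0$); the translating case is analogous. Thus $\ran D_gF(0,0,0)=\mathbb{R}$, and Lemma~\ref{abstract-lemma} produces the unique $C^1$ curve $\varepsilon\mapsto(f(\varepsilon),\lambda_1(\varepsilon))$ solving \eqref{sol_g-alpha}, with uniqueness among all nearby solutions.

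\emph{Asymptotics, symmetry, convexity.} Implicitly differentiating \eqref{sol_g-alpha} at $\varepsilon=0$ yields $D_\varepsilon\mathcal{G}^\alpha(0,0;\lambda^*)+D_f\mathcal{G}^\alpha(0,0;\lambda^*)f'(0)+D_{\lambda_1}\mathcal{G}^\alpha(0,0;\lambda^*)\tilde\lambda_1'(0)=0$. A direct computation of $D_\varepsilon\mathcal{G}^\alpha_j(0,0;\lambda^*)$ shows the angular-velocity contribution drops (because $w\overline w=1$ is real) and that $\partial_\varepsilon\mathcal{J}^\alpha_k$ contributes only a pure second harmonic $\Im\{c_j\overline w^2\}$; in particular the first-harmonic part vanishes, which by injectivity of $D_{\lambda_1}\mathcal{P}^\alpha(\lambda^*)$ forces $\tilde\lambda_1'(0)=0$, i.e.\ $\lambda_1(\varepsilon)=\lambda_1^*+o(\varepsilon)$. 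Inverting the diagonal multiplier operator of Proposition~\ref{proposition2}(iii) on this second harmonic (its symbol $M_1^\alpha$ being nonzero, with $\Xi_\alpha=\alpha C_\alpha(\alpha+2)/(4M_1^\alpha)$) recovers the stated $\overline w$-coefficient of $f_j$. Part (ii) then follows from uniqueness: the reflection symmetry \eqref{g-eps} shows that $\big(w\mapsto f(\varepsilon)(-w),\,\lambda_1(\varepsilon)\big)$ also solves \eqref{sol_g-alpha} at $-\varepsilon$ and still lies in $B_1^\alpha\times\mathbb{R}^{2N-1}$, so it must coincide with $\big(f(-\varepsilon),\lambda_1(-\varepsilon)\big)$. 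For (iii) I use the criterion that the exterior image of $\mathbb{T}$ under $\phi_j^\varepsilon$ is strictly convex iff $\Re\big(1+w(\phi_j^\varepsilon)''(w)/(\phi_j^\varepsilon)'(w)\big)>0$ on $\mathbb{T}$; at $\varepsilon=0$ this equals $1$, and since $\phi_j^\varepsilon=\operatorname{Id}+\varepsilon|\varepsilon|^\alpha b_j^{1+\alpha}f_j$ with $f_j$ small it converges uniformly to $1$ and stays positive for small $\varepsilon\neq0$.

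\emph{Main obstacle.} Granting Proposition~\ref{proposition2} and Lemma~\ref{coro:idG}, the delicate points are the exact codimension-one count, which rests entirely on the separation between the harmonics produced by $D_f$ and by $D_{\lambda_1}$ and hence on the non-degeneracy hypothesis, and the asymptotic step, in particular verifying that $D_\varepsilon\mathcal{G}^\alpha(0,0;\lambda^*)$ is a pure second harmonic so that $\lambda_1'(0)=0$ and $f'(0)$ reduces to the single $\overline w$-mode. The convexity criterion finally requires second-order control of $\phi_j^\varepsilon$, which is why the curve is constructed in the smoother ball $B_1^\alpha$ rather than merely in $C^{1+\beta}$ (cf.\ Remark~\ref{rk:general}).
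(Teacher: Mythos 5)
Your proposal matches the paper's proof essentially step for step: the same application of Lemma~\ref{abstract-lemma} with $G=\mathcal{G}^\alpha$ and the scalar constraint $F$ built from the identities of Lemma~\ref{coro:idG}, the same first-harmonic splitting $\mathcal{W}_0^\alpha=\widetilde{\mathcal{W}}_0^\alpha\oplus\{\Im[c\overline w]\}$ giving trivial kernel and codimension one, the same implicit differentiation (with $\partial_\varepsilon\mathcal{G}^\alpha(0,0;\lambda^*)$ a pure second harmonic forcing $\lambda_1'(0)=0$ and yielding the $\overline w$-coefficient via $M_1^\alpha$), and the same uniqueness-plus-symmetry and curvature arguments for (ii) and (iii). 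The only point left unverified is the symmetry $\mathcal{G}^\alpha_j(\varepsilon,f;\lambda)(-w)=\mathcal{G}^\alpha_j(-\varepsilon,f(-\cdot);\lambda)(w)$ underlying (ii), which the paper checks by the change of variables $\tau\mapsto-\tau$ in $\mathcal{I}^\alpha$ and $\mathcal{J}^\alpha_k$.
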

\begin{proof}
In view of Proposition~\ref{proposition2}, for any $h\in \mathcal{V}_0^\alpha$ and $\dot\lambda_1\in \R^{2N-1}$, we have 
\begin{equation}\label{diff-g-gen}
D_{(f;\lambda_1)}\mathcal{G}^\alpha(0,0;\lambda^*)\begin{pmatrix} h\\ \dot\lambda_1\end{pmatrix}(w)=
 D_{f}\mathcal{G}^\alpha(0,0;\lambda^*)h(w)+ \Im \big\{
 D_{\lambda_1}\mathcal{P}_j^\alpha(\lambda^*)\dot\lambda_1
   \overline{w}\big\},
\end{equation}
where $D_{f}\mathcal{G}^\alpha (0,0;\lambda^*)$ is an isomorphism from  $ \mathcal{V}_0^\alpha$ to $  \widetilde{\mathcal{W}}_0^\alpha$. From the hypothesis on the matrix $D_{\lambda_1}\mathcal{P}^\alpha(\lambda^*)$, the second linear operator $D_{\lambda_1}\mathcal G^\alpha(0,0;\lambda^*)$ on the right hand side has a trivial kernel and 
$$\ran [D_{\lambda_1}\mathcal G^\alpha(0,0;\lambda^*)]\subset \mathbb{W}:=\big\{w\mapsto \Im[c_1 w] : c_1\in \mathbb{C}^N\big\}$$ is codimension $1$.
Moreover, it is easy to see that
\begin{equation}\label{www}
{\mathcal{W}}_0^\alpha=\widetilde{\mathcal{W}}_0^\alpha \oplus \mathbb{W}. 
\end{equation}
Thus, one has
\begin{equation}\label{ker-ran-DG}
\codim\ran D_{(f;\lambda_1)}\mathcal{G}^\alpha(0,0;\lambda^*) = 1 \quad \textnormal{and} \quad  \ker D_{(f;\lambda_1)}\mathcal{G}^\alpha(0,0;\lambda^*)=\{ 0\}.
\end{equation} 
In the case of pure translation ($\Omega=0$ and $U\neq 0$) we set
 \begin{equation}\label{PHI1}
\Phi(\varepsilon,f, g;\lambda):={\rm Im}\Big\{\sum_{j=1}^N\frac{\gamma_j}{\pi} \int_{\mathbb{T}}g_j(w)\big(1+\varepsilon b_j \overline{w} f_j(w)\big)dw\Big\}
\end{equation}
while in the case of pure rotation ($\Omega \neq 0$ and $U=0$) we instead set
\begin{align}\label{PHI2}
\Phi(\varepsilon,f, g;\lambda):={\rm Im}\Big\{\sum_{j=1}^N\frac{\gamma_j}{\pi}\int_{\mathbb{T}}g_j(w) \Big(\varepsilon b_j|w+\varepsilon b_j f_j(w)|^2+\varepsilon b_j\Re[\overline{z_j}f_j(w)]+\Re[\overline{z_j}w]\Big)\overline{w}dw\Big\}
\end{align}
with $g=(g_1,\ldots, g_N)\in \mathcal{W}^\alpha_0$. It is clear that the mapping  $\Phi \colon(-\varepsilon_0,\varepsilon_0)\times B_0^\alpha \times \mathcal{B}_0^\alpha\times \Lambda\to \mathbb{R}$ is $C^1$ and that for all $(\varepsilon,f,\lambda) \in (-\varepsilon_0,\varepsilon_0)\times B_0^\alpha\times\Lambda$ one has
\begin{align} \label{condition 1}
\Phi\big(\varepsilon,f,  0;\lambda\big)&=0.
 \end{align} 
 Moreover, by \eqref{PHI1}--\eqref{PHI2} and  Lemma~\ref{coro:idG}{\rm (i)}, we have
\begin{align} \label{condition 2}
\Phi\big(\varepsilon,f,  \mathcal{G}^\alpha(\varepsilon,f;\lambda);\lambda\big)&=0.
 \end{align}
By differentiating  \eqref{PHI1}--\eqref{PHI2} with respect to $g$ in the direction $\tilde g=(\tilde g_1,\ldots,\tilde g_N)\in \mathcal W_0^\alpha$, 
\begin{equation}\label{expang}
\tilde g_j(w)=\sum_{n\neq 0}
c_{n,j}\,{w}^n, \; c_{-n,j}=\overline{c_{n,j}}\in \mathbb{C}, \; j=1,\ldots, N,
\end{equation}
 we get, for $\Omega =0$ and $U\neq 0$, 
\begin{align*}
 D_{g}\Phi (0,0,0;\lambda)\tilde g&={\rm Im}\Big\{\sum_{j=1}^N\frac{\gamma_j}{\pi} \int_{\mathbb{T}}\tilde g_j(w)dw\Big\}=-2\sum_{j=1}^N{\gamma_j} {\rm Re}[ c_{1,j}]
 \end{align*}
 and, for  $\Omega \neq 0$ and $U=0$, 
 \begin{align*}
 D_{g}\Phi (0,0,0;\lambda)\tilde g &={\rm Im}\Big\{\sum_{j=1}^N\frac{\gamma_j}{\pi}\int_{\mathbb{T}}\tilde g_j(w) \Re[\overline{z_j}w]\overline{w}dw\Big\}=2\sum_{j=1}^N{\gamma_j}\Re[{z_j}c_{1,j}].
\end{align*}
In either case, we easily check that
\begin{align} \label{condition 3}
\ran\big[ D_{g}\Phi (0,0,0;\lambda^*)\big]&=\mathbb{R}.
 \end{align} 
Consequently, the existence and uniqueness in {\rm(i)} follow from \eqref{ker-ran-DG}--\eqref{condition 3} and Lemma~\ref{abstract-lemma}. 
 
Next, differentiating \eqref{sol_g-alpha} with respect to $\varepsilon$ at the point $(0,0; \lambda^*)$  we get
\begin{align}\label{diff-g-gen-2}
D_{(f;\lambda_1)}\mathcal{G}^\alpha(0,0;\lambda^*)\partial_\varepsilon \big( f(\varepsilon);\lambda(\varepsilon)\big)\Big|_{\varepsilon=0}=-\partial_\varepsilon \mathcal{G}^\alpha \big(0,0; \lambda^*\big).
\end{align}
In view of  \eqref{Fjsqgalpha0}, for all $\alpha\in(0,1)$  we have
\begin{align}\label{f0dif-eps}
\partial_\varepsilon\mathcal{G}^\alpha_j(0,0;\lambda^*)(w)  &= \frac{\alpha}{2}\left(\frac{\alpha}{2}+1\right)  C_\alpha \sum_{k=1, k\neq j}^N{\gamma_k} b_j\Im \Big\{
   \frac{(z_k-z_j)^2}{ |z_k -z_j|^{\alpha+4}}  \overline{ w}^2 \Big\}
\end{align}
and, by \eqref{rotn+100}, for $\alpha=0$ we get
\begin{align}\label{f0dif-eps2}
  \nonumber\partial_\varepsilon\mathcal{G}^\alpha_j(0,0;\lambda^*)(w)
    &= -\frac12 \sum_{k=1, k\neq j}^N{\gamma_k} b_j\Im \Big\{
    \frac{{ w}^2}{(z_k-z_j)^2}  \Big\}.
\end{align}
Thus, for all $\alpha\in[0,1)$ we have   
$$\partial_\varepsilon\mathcal{G}^\alpha_j(0,0;\lambda^*)\in\widetilde{\mathcal{W}}_0^\alpha.$$ Since the linear operator 
$D_{f}\mathcal{G}^\alpha (0,0;\lambda^*)\colon \mathcal{V}_0^\alpha \to  \widetilde{\mathcal{W}}_0^\alpha$ is an isomorphism and, by hypothesis, the kernel of the operator   $D_{\lambda_1}\mathcal{P}_j^\alpha(\lambda^*)$ is trivial, combining \eqref{diff-g-gen}, \eqref{diff-g-gen-2}, \eqref{f0dif-eps} and Proposition~\ref{proposition2}{\rm (iii)}  we conclude  that
$$
\partial_\varepsilon  \lambda(\varepsilon)\big|_{\varepsilon=0}=0\quad \textnormal{and} \quad  \partial_\varepsilon  f_j(\varepsilon)\big|_{\varepsilon=0}(w)=\begin{cases}
   \displaystyle \big(\tfrac{\alpha}{2}+1\big) \frac{\alpha C_\alpha}{2M_1^\alpha} \sum_{k=1, k\neq j}^N\frac{\gamma_k}{\gamma_j} \frac{{b_j}(\overline{z_k}-\overline{z_j})^2}{ |z_k -z_j|^{\alpha+4}} \overline w & \text{if } \alpha\in(0,1), \\ 
      \displaystyle \sum_{k=1, k\neq j}^N\frac{\gamma_k}{\gamma_j} \frac{{b_j}}{(\overline{z_k}-\overline{z_j})^2} \overline w & \text{if } \alpha=0. 
  \end{cases} 
$$
Finally, straightforward computations yield
\begin{equation}\label{gam1hatg}
\frac{\alpha {C_\alpha}}{M_1^\alpha}=\frac{\Gamma(1-\frac\alpha2)\Gamma(3-\frac\alpha2)}{\Gamma(2-\alpha)},
\end{equation}
completing the proof of {\rm (i)}.

\vspace{0.2cm}

  By the uniqueness in {\rm(i)}, in order to prove {\rm(ii)} it suffices to show that
 \begin{equation}\label{g-eps}
\begin{split}
 &\mathcal{G}^\alpha_j(\varepsilon,f;\lambda)(-w)= \mathcal{G}^\alpha_j(-\varepsilon,\tilde{f};\lambda)(w),
\end{split}
\end{equation}
where $\tilde f(w)=f(-w)$. From \eqref{exp:Galpha} one has
\begin{equation}\label{g-eps2}
\begin{split}
 &\mathcal{G}^\alpha_j(-\varepsilon,\tilde{f};\lambda)(w)= \Im \Big\{
 \Big(
  \Omega\big(-\varepsilon b_j w+\varepsilon^2|\varepsilon|^\alpha b_j^{2+\alpha}  \tilde{f}_j(w)+z_j \big)+U
 \\
 & +\gamma_j \,\mathcal{I}^\alpha[-\varepsilon, \tilde{f}_j](w)  + \sum_{k=1, k\neq j}^N\gamma_k \mathcal{J}^\alpha_k[\varepsilon,f_k,f_j](w) \Big)
 \overline{ w}\big(1-\varepsilon|\varepsilon|^\alpha b_1^{1+\alpha} \overline{\tilde{f}_j'(w)} \big)
  -\mu_\alpha\, \gamma_j \overline{\tilde{f}_j'(w)}\Big\}.
\end{split}
\end{equation}
Since $\tilde{f}'_j(w) = -f_j'(-w)$, by \eqref{Ialpha0}  we have
\begin{equation*}
\begin{split}
\mathcal{I}^\alpha[-\varepsilon, \tilde{f}_j](w)&:= C_{\alpha} \fint_\T 
  \frac{f_j'(-\tau)}{|\tau-w-\varepsilon|\varepsilon|^\alpha b_j^{1+\alpha}\big( f_j(-\tau)-f_j(-w)\big)|^\alpha}\, d\tau
  \\
   &\qquad +\alpha C_{\alpha} \fint_\T\int_0^1 
  \frac{\Re\big[\big(f_j(-\tau)-f_j(-w)\big)(
 \overline{\tau} - \overline{w})\big]-\varepsilon|\varepsilon|^\alpha t|f_j(-\tau)-f_j(-w)|^2}{|\tau-w-t\varepsilon|\varepsilon|^\alpha b_j^{1+\alpha}\big(f_j(-\tau)+f_j(-w)\big)|^{2+\alpha}}dt\, d\tau.
\end{split}
\end{equation*}
Making the change of variable $\tau\mapsto -\tau$ we get
\begin{equation*}
\begin{split}
\mathcal{I}^\alpha[-\varepsilon, \tilde{f}_j](w)&= -C_{\alpha} \fint_\T 
  \frac{f_j'(\tau)}{|\tau+w+\varepsilon|\varepsilon|^\alpha b_j^{1+\alpha}\big( f_j(\tau)-f_j(-w)\big)|^\alpha}\, d\tau
  \\
   &\qquad+\alpha C_{\alpha} \fint_\T\int_0^1 
  \frac{\Re\big[\big(f_j(\tau)-f_j(-w)\big)(
 \overline{\tau} + \overline{w})\big]+\varepsilon|\varepsilon|^\alpha t|f_j(\tau)-f_j(-w)|^2 }{|w+\tau+t\varepsilon|\varepsilon|^\alpha b_j^{1+\alpha}\big(f_j(\tau)+f_j(-w)\big)|^{2+\alpha}}dt\, d\tau 
 \\
  &=\mathcal{I}^\alpha[\varepsilon, {f}_j](-w).
\end{split}
\end{equation*}
In a similar way we can check that
$$
\mathcal{J}^\alpha_{k}[-\varepsilon,\tilde{f}_{k},\tilde{f}_j;\lambda](w) =\mathcal{J}^\alpha_{k}[\varepsilon,{f}_{k},{f}_j;\lambda](-w). 
$$
Inserting the two last identities into \eqref{g-eps2} and using the fact that $\tilde{f}_j(w) = f_j(-w)$ yields
\eqref{g-eps} as desired.
 
\vspace{0.2cm}

As mentioned in Remark~\ref{rk:general}, with only minor modifications the above proof still holds when $V_0^\alpha$ and $W_0^\alpha$ in \eqref{Valpha0} are replaced by spaces with higher H\"older regularity $C^{n+1+\eta_\alpha}$ and $C^{n+\eta_\alpha}$ for any fixed $n \in \N$, at the cost of possibly shrinking $\varepsilon_1$. In particular, by uniqueness we may assume that $f_j$ and hence $\phi_j$ are $C^2$, which allows us to prove the convexity of the domains $\mathcal{O}_j^\varepsilon$ by following the same argument as in \cite{Hmidi-Mateu}. 

Recall that the curvature can be expressed, in terms of the conformal mapping, by the formula
$$
\kappa(w) = \frac{1}{|\phi_j^\varepsilon(w)|}
\Re\Big(1+w\frac{\phi_j''(w)}{\phi_j'(w)} \Big).
$$
As 
$
\phi_j(w) = w + \varepsilon|\varepsilon|^\alpha b_j^{1+\alpha}
f_j(w)
$,
we easily verify that
$$
1+w\frac{\phi_j''(w)}{\phi_j'(w)}=1+O(\varepsilon|\varepsilon|^\alpha),
$$
uniformly in $w$.
Thus the curvature is strictly positive and therefore the
domain $\mathcal{O}_j^\varepsilon$ is strictly convex.
\end{proof}

Now, we treat the stationary case, where $\Omega=U=0$. We have the following result.
\begin{theorem}\label{theorem-Phi-stationary}
  Let $\alpha\in[0,1)$ and  let $\lambda^*$ be a non-degenerate solution, in the sense of Definition~\ref{def:non-deg}{\rm (ii)},  to the $N$-vortex problem \eqref{alg-sysP} with $\Omega=U=0$. 
  Then the conclusions of Theorem~\ref{prop:ift} hold, except that now $\lambda_1(\varepsilon)$ takes values in $\R^{2N-3}$ rather than $\R^{2N-1}$.
\end{theorem}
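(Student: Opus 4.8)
The plan is to follow the proof of Theorem~\ref{prop:ift} essentially verbatim, the only structural changes being that the point-vortex parameter now ranges over $\lambda_1\in\R^{2N-3}$ and that the finite-dimensional equation produced by the abstract lemma has $n=3$ rather than $n=1$. First I would record the linear algebra at $(0,0;\lambda^*)$. Exactly as in \eqref{diff-g-gen}, the Fréchet derivative splits as
$$
D_{(f;\lambda_1)}\mathcal{G}^\alpha(0,0;\lambda^*)\begin{pmatrix} h\\ \dot\lambda_1\end{pmatrix}(w)=D_{f}\mathcal{G}^\alpha(0,0;\lambda^*)h(w)+ \Im \big\{D_{\lambda_1}\mathcal{P}_j^\alpha(\lambda^*)\dot\lambda_1\,\overline{w}\big\},
$$
where the first summand is an isomorphism onto $\widetilde{\mathcal{W}}_0^\alpha$ by Proposition~\ref{proposition2}{\rm(iv)} and the second takes values in the complementary space $\mathbb{W}$ appearing in \eqref{www}. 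By Definition~\ref{def:non-deg}{\rm(ii)} we have $\codim\ran D_{\lambda_1}\mathcal{P}^\alpha(\lambda^*)=3$; since $D_{\lambda_1}\mathcal{P}^\alpha(\lambda^*)$ maps $\R^{2N-3}$ into $\R^{2N}$, the rank--nullity theorem forces its kernel to be trivial, so its image is a $(2N-3)$-dimensional subspace of $\mathbb{W}$. Arguing as for \eqref{ker-ran-DG}, I conclude that $D_{(f;\lambda_1)}\mathcal{G}^\alpha(0,0;\lambda^*)$ has trivial kernel and a range of codimension $3$ in $\mathcal{W}_0^\alpha$.

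To absorb this codimension-$3$ deficiency I would assemble the three scalar nonlinear identities available when $\Omega=U=0$. Since $\Omega=0$, Lemma~\ref{coro:idG}{\rm(i)} applies, and since $U=0$ its right-hand side vanishes, so both the real and imaginary parts of the complex quantity
$$
M(\varepsilon,f,g;\lambda):=\sum_{j=1}^N\frac{\gamma_j}{\pi}\int_{\mathbb{T}}g_j(w)\big(1+\varepsilon b_j f_j(w)\overline{w}\big)dw
$$
vanish when $g=\mathcal{G}^\alpha(\varepsilon,f;\lambda)$; since $U=0$, Lemma~\ref{coro:idG}{\rm(ii)} supplies a third such identity. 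I therefore set $\Phi=(\Phi_1,\Phi_2,\Phi_3)\colon(-\varepsilon_0,\varepsilon_0)\times B_0^\alpha\times\mathcal{B}_0^\alpha\times\Lambda\to\R^3$ with $\Phi_1:=\Re\{M\}$, $\Phi_2:=\Im\{M\}$, and $\Phi_3$ equal to the rotation functional defined in \eqref{PHI2}. Each component is $C^1$, vanishes identically when $g=0$, and satisfies $\Phi_i(\varepsilon,f,\mathcal{G}^\alpha(\varepsilon,f;\lambda);\lambda)=0$ by Lemma~\ref{coro:idG}; these are precisely the hypotheses \eqref{eqn:F0} and \eqref{eqn:FG} of Lemma~\ref{abstract-lemma}.

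The crux is the range condition \eqref{eqn:ranF}, namely that $\ran D_g\Phi(0,0,0;\lambda^*)=\R^3$. Linearizing at $(\varepsilon,f,g)=(0,0,0)$, the weights reduce to $1$ and $\Re[\overline{z_j}w]\overline{w}$, so that, testing against $\tilde g_j=\sum_{n\neq0}c_{n,j}w^n$, only the first Fourier mode survives; a residue computation, just as in the derivation of \eqref{condition 3}, shows that $D_g\Phi(0,0,0;\lambda^*)\tilde g$ equals, up to nonzero multiplicative constants, the triple $\big(\sum_j\gamma_j\Re[c_{1,j}],\ \sum_j\gamma_j\Im[c_{1,j}],\ \sum_j\gamma_j\Re[z_jc_{1,j}]\big)$. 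Writing $c_{1,j}=p_j+iq_j$ and $z_j=x_j+iy_j$, these are the functionals $\sum_j\gamma_jp_j$, $\sum_j\gamma_jq_j$ and $\sum_j\gamma_j(x_jp_j-y_jq_j)$ on $(p,q)\in\R^{2N}$. The first two are independent and nonzero as soon as some $\gamma_j\neq0$, and the third fails to lie in their span precisely when the points $z_j$ with $\gamma_j\neq0$ are not all equal. This holds for any genuine stationary configuration of distinct interacting vortices, and in particular for the tripole \eqref{stationary-tripole}. Verifying this independence is the main obstacle, as it is the single step that uses the geometry of the configuration rather than the abstract machinery.

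Granting \eqref{eqn:ranF}, Lemma~\ref{abstract-lemma} applied with $n=3$ produces a unique $C^1$ curve $(f,\lambda_1)\colon(-\varepsilon_1,\varepsilon_1)\to B_1^\alpha\times\R^{2N-3}$ solving \eqref{sol_g-alpha}. The remaining conclusions then follow word for word from the proof of Theorem~\ref{prop:ift}: the leading-order expansion in part {\rm(i)} uses only that $\partial_\varepsilon\mathcal{G}^\alpha(0,0;\lambda^*)\in\widetilde{\mathcal{W}}_0^\alpha$ together with the triviality of $\ker D_{\lambda_1}\mathcal{P}^\alpha(\lambda^*)$ established above, while the symmetry identity \eqref{g-eps} and the curvature estimate proving strict convexity are both insensitive to the vanishing of $\Omega$ and $U$.
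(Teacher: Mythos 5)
Your proposal is correct and follows essentially the same route as the paper: the same linear algebra giving trivial kernel and codimension-$3$ range for $D_{(f;\lambda_1)}\mathcal{G}^\alpha(0,0;\lambda^*)$, the same three-component functional built from Lemma~\ref{coro:idG} (real and imaginary parts of the translational identity plus the rotational one, exactly the paper's $\widetilde\Phi$), and the same application of Lemma~\ref{abstract-lemma} with $n=3$, with the expansion, symmetry and convexity deferred to the proof of Theorem~\ref{prop:ift}. Your explicit check that the three linearized functionals $\sum_j\gamma_j p_j$, $\sum_j\gamma_j q_j$, $\sum_j\gamma_j(x_jp_j-y_jq_j)$ are independent unless all vortices with $\gamma_j\neq 0$ coincide is actually more careful than the paper's ``we can easily check'' at the corresponding step \eqref{condition 32}.
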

\begin{proof} We shall only give the proof of the existence and uniqueness of {\rm(i)}.
The proof of the asymptotic expansion and  {\rm(ii)}--{\rm(iii)} follow the same lines of Theorem~\ref{prop:ift}. 

From Proposition~\ref{proposition2}, the hypothesis on the matrix $D_{\lambda_1}\mathcal{P}^\alpha(\lambda^*)$, \eqref{diff-g-gen} and \eqref{www} we conclude that
\begin{equation}\label{ker-ran-DG2}
\codim\ran D_{(f;\lambda_1)}\mathcal{G}^\alpha(0,0;\lambda^*) = 3 \quad \textnormal{and} \quad  \ker D_{(f;\lambda_1)}\mathcal{G}^\alpha(0,0;\lambda^*)=\{ 0\}.
\end{equation} 
For all $(\varepsilon,f,\lambda) \in (-\varepsilon_0,\varepsilon_0)\times B_0^\alpha\times\Lambda$, we set
\begin{align*}
\widetilde\Phi(\varepsilon,f, g;\lambda):=\sum_{j=1}^N\frac{\gamma_j}{\pi}\begin{pmatrix}
{\rm Re}\big\{ \int_{\mathbb{T}}g_j(w)\big(1+\varepsilon b_j \overline{w} f_j(w)\big)dw\big\}
\\
{\rm Im}\big\{ \int_{\mathbb{T}}g_j(w)\big(1+\varepsilon b_j \overline{w} f_j(w)\big)dw\big\}
\\{\rm Im}\big\{\int_{\mathbb{T}}g_j(w) \big(\varepsilon b_j|w+\varepsilon b_j f_j(w)|^2+\varepsilon b_j\Re\big[\overline{z_j}f_j(w)\big]+\Re[\overline{z_j}w]\big)\overline{w}dw\big\}
\end{pmatrix}.
\end{align*}
The mapping  $\widetilde\Phi \colon(-\varepsilon_0,\varepsilon_0)\times B_0^\alpha \times \mathcal{B}_0^\alpha\times \Lambda\to \mathbb{R}^3$ is $C^1$ and satisfies 
\begin{align} \label{condition 12}
\widetilde \Phi\big(\varepsilon,f,  0;\lambda\big)&=0.
 \end{align} 
 Moreover, by   Lemma~\ref{coro:idG}, we have
\begin{align} \label{condition 22}
\widetilde\Phi\big(\varepsilon,f,  \mathcal{G}^\alpha(\varepsilon,f;\lambda);\lambda\big)&=0.
 \end{align}
Differentiating  $\widetilde\Phi$ with respect to $g$ in the direction $\tilde g=(\tilde g_1,\ldots,\tilde g_N)\in \mathcal W_0^\alpha$ in \eqref{expang}
gives
\begin{align*}
 D_{g}\widetilde\Phi (0,0,0;\lambda)\tilde g&={\rm Im}\Big\{\sum_{j=1}^N\frac{\gamma_j}{\pi} \int_{\mathbb{T}}\tilde g_j(w)dw\Big\}=-2\sum_{j=1}^N{\gamma_j} \begin{pmatrix} {\rm Im}[ c_{1,j}]\\ {\rm Re}[ c_{1,j}]\\ y_j {\rm Im}[ c_{1,j}]-x_j\Re[c_{1,j}]\end{pmatrix}.
 \end{align*}
We can easily check that
\begin{align} \label{condition 32}
\ran\big[ D_{g}\widetilde\Phi (0,0,0;\lambda^*)\big]&=\mathbb{R}^3.
 \end{align} 
 Thus,  from \eqref{ker-ran-DG2}--\eqref{condition 32} and using Lemma~\ref{abstract-lemma} we conclude the desired result.
\end{proof}

\begin{remark}\label{rem:bift}
  In this section we have suppressed the dependence of $\mathcal G^\alpha$ on the parameters $b_j \in (0,\infty)$. Just as with $\varepsilon$, one can check that $\mathcal G^\alpha$ is in fact $C^1$ in $b_j$. This is true even for $b_j = 0$, corresponding to the case where $j$-th point vortex is not desingularized into a vortex patch but instead remains a point vortex. Applying Lemma~\ref{abstract-lemma} as in the proof of Theorem~\ref{prop:ift}, one obtains families of solutions made up of a combination of point vortices and small vortex patches. The same can be done in the examples of Sections~\ref{sec:asym} and \ref{sec:nest-poly} below.
\end{remark}

%%%%%%%%%%%%%%%%%%%%%%%%%%%%%%%%%%%%%%%%%%%%%%%
\section{Examples of asymmetric vortex equilibria}\label{sec:asym}
 
In this section we shall give some explicit examples of point vortex solutions to the $N$-vortex problem \eqref{alg-sysP} satisfying the non-degeneracy condition in Definition~\ref{def:non-deg}. In particular, we prove Theorem \ref{thm:informal-pair} by simply applying Theorems~\ref{prop:ift} and \ref{theorem-Phi-stationary}.

\vspace{0.2cm}

\paragraph{\bf Asymmetric  co-rotating pairs.} Set $N=2$  and consider the rotating solution $\lambda^*$ given by \eqref{v-pair-rot}  
to the $N$-vortex problem \eqref{alg-sysP}. 
The differential of the mapping 
$${\mathcal{P}}^{\alpha}:=\bigl(\Re[\mathcal{P}_1^\alpha],\Im[\mathcal{P}_1^\alpha],\Re[\mathcal{P}_2^\alpha],\Im[\mathcal{P}_2^\alpha]\big)$$ with respect to $\lambda_1=(x_1,x_2,y_2)$ at the point $\lambda^*$ is 
\begin{equation*}
D_{\lambda_1}{\mathcal{P}}^{\alpha}(\lambda^*)\begin{pmatrix}
\dot x_1\\
\dot x_2\\
\dot y_2
\end{pmatrix}=\frac{\gamma \widehat{C}_\alpha}{2d^{\alpha+2}(1+\mathtt{c})^{\alpha+2}}
\begin{pmatrix}
2+\mathtt{c}+\alpha & -(\alpha+1)  & 0\\
0 & 0 & 1\\
-\mathtt{c}(\alpha+1)&1+\mathtt{c}(2+\alpha)  & 0 \\
0 & 0 & 1  
\end{pmatrix}\begin{pmatrix}
\dot x_1\\
\dot x_2\\
\dot y_2
\end{pmatrix}.
\end{equation*}
Eliminating the second row we get a matrix with Jacobian determinant $(\alpha+2)(1+\mathtt{c})^2$, which is nonzero if $\mathtt{c}\neq -1$. Thus, this matrix has rank 3, which implies that the kernel is trivial and the image has codimension $1$. Therefore, Theorem~\ref{prop:ift} applies yielding the existence of $\varepsilon_1>0$ and a unique $C^1$ function $(f;\lambda_1)=(f_1,f_2;x_1,x_2,y_2)\colon (-\varepsilon_1,\varepsilon_1)\longrightarrow B_1^\alpha \times \R^{3}$ satisfying
\begin{equation}\label{sol_g-alpha-pair}
\mathcal{G}^\alpha \big(\varepsilon,f(\varepsilon); \lambda_1(\varepsilon), \lambda_2^*\big)=0.
\end{equation}
  It remains only to check the reflection symmetry property. From \eqref{exp:Galpha}, one has
\begin{equation*}
\begin{split}
\mathcal{G}^\alpha_{j}(\varepsilon,f;&\lambda_1,\lambda_2)(\overline{w}):=-\Im \Big\{
 \Big( \Omega\big(\varepsilon  b_j w+\varepsilon^2|\varepsilon|^\alpha  b_j^{2+\alpha} \overline{ f_j(\overline{w})}+\overline{ z_j}\big)\Big)
 \overline{ w}\big(1+\varepsilon|\varepsilon|^\alpha b_j^{1+\alpha} {f_j'(\overline{w})} \big)
\\
&-\mu_\alpha\, \gamma_j {f_j'(\overline{w})}+
\Big( \gamma_j \,\overline{\mathcal{I}^\alpha[\varepsilon, f_j](\overline{w})}+\gamma_{3-j} \overline{\mathcal{J}^\alpha_{3-j}[\varepsilon,f_k,f_j](\overline{w})} \Big)\overline{ w}\big(1+\varepsilon|\varepsilon|^\alpha b_j^{1+\alpha} {f_j'(\overline{w})} \big)\Big\}.
\end{split}
\end{equation*}
Set 
\begin{equation}\label{ref-sym}
\tilde{f_j}(w):=\overline{f_j(\overline{w})}\quad\textnormal{and}\quad \tilde{\lambda}_1=(x_1,x_2,-y_2). 
\end{equation}
Since $\tilde{f}'_j(w) = \overline{f_j'(\overline{w})}$ then we can easily check, from \eqref{I00}--\eqref{J00}  and   \eqref{Ialpha0}--\eqref{Jalpha0}, that 
$$
\overline{\mathcal{I}^\alpha[\varepsilon, f_j](\overline{w})}=\mathcal{I}^\alpha[\varepsilon, \tilde f_j](w)\quad\textnormal{and}\quad \overline{\mathcal{J}^\alpha_{3-j}[\varepsilon,f_{3-j},f_j;\lambda_1,\lambda_2](\overline{w}) }=\mathcal{J}^\alpha_{3-j}[\varepsilon,\tilde f_{3-j},\tilde f_j;\tilde{\lambda}_1,\lambda_2](w). 
$$
It follows that
\begin{equation}\label{ref-sym2-pair}
\mathcal{G}_j^\alpha (\varepsilon,f;\lambda_1,\lambda_2)(\overline{w})=-\mathcal{G}_j^\alpha (\varepsilon,\tilde f, \tilde \lambda_1,\lambda_2)({w}). 
\end{equation}
Since $\lambda_1^*(0)=\tilde \lambda_1^*(0)$, by uniqueness of the solution of \eqref{sol_g-alpha-pair} we conclude that
$$
\overline{f_j(w)}={f_j(\overline{w})}\quad\textnormal{and}\quad y_2=0,
$$
which implies  that the Fourier coefficients of $f_j\in V_0^\alpha$ are real
and the domain associated to the conformal mapping $\phi_j=\operatorname{Id}+\varepsilon|\varepsilon|^\alpha b_j^{\alpha+1} f_j$   is symmetric with respect to the real axis.

\vspace{0.2cm}

\paragraph{\bf Asymmetric  counter-rotating pairs.} Set  $N=2$ and consider the translating solution $\lambda^*$ given by  \eqref{v-pair-trans}
to the $N$-vortex problem \eqref{alg-sysP}.
The differential of the mapping 
$${\mathcal{P}}^{\alpha}:=\bigl(\Re[\mathcal{P}_1^\alpha],\Im[\mathcal{P}_1^\alpha],\Re[\mathcal{P}_2^\alpha],\Im[\mathcal{P}_2^\alpha]\big)$$ with respect to $\lambda_1=(x_2,y_2,\gamma_1)$ is 
\begin{equation*}
D_{\lambda_1}{\mathcal{P}}^{\alpha}(\lambda^*)\begin{pmatrix}
\dot x_2\\
\dot y_2\\
\dot \gamma_1
\end{pmatrix}=\frac{\widehat{C}_\alpha}{2^{\alpha+3}}\frac{1}{d^{\alpha+2}}\
\begin{pmatrix}
- \gamma(\alpha+1) &0 & 0\\
0 & \gamma & 0\\
 -\gamma(\alpha+1) & 0 & 2d \\
0 & \gamma & 0  
\end{pmatrix}\begin{pmatrix}
\dot x_2\\
\dot y_2\\
\dot \gamma_1
\end{pmatrix}
\end{equation*}
By eliminating the last line we get a matrix with Jacobian determinant $2\gamma d(\alpha+1)$, which is nonzero. Therefore, this matrix has rank 3, which implies that the kernel is trivial and the image has codimension one. Hence, the existence of counter-rotating vortex patch pair follows from Theorem~\ref{prop:ift}. The reflection symmetry property can be checked  similarly to the co-rotating case.

\vspace{0.2cm}

\paragraph{\bf Stationary tripole} Set $N=3$ in \eqref{alg-sysP} and  consider the stationary tripole  $\lambda^*$ given by \eqref{stationary-tripole}. The differential of the mapping $${\mathcal{P}}^{\alpha}:=\bigl(\Re[\mathcal{P}_1^\alpha],\Im[\mathcal{P}_1^\alpha],\Re[\mathcal{P}_2^\alpha],\Im[\mathcal{P}_2^\alpha],\Re[\mathcal{P}_3^\alpha],\Im[\mathcal{P}_3^\alpha]\big)$$ 
with respect to $\lambda_1=(x_3,y_3,\gamma_2)$ at the point $\lambda^*$ is
\begin{equation*}
D_{\lambda_1}{\mathcal{P}}^{\alpha}(\lambda^*)\begin{pmatrix}
\dot x_3\\
\dot y_3\\
\dot \gamma_2
\end{pmatrix}=\frac{\gamma \widehat{C}_\alpha}{2}\
\begin{pmatrix}
-(\alpha+1)\mathtt{a}^{\alpha+1}{(\mathtt{a}+1)^{-\alpha-2}} &0 & -1/\gamma\\
0 & \mathtt{a}^{\alpha+1}{(\mathtt{a}+1)^{-\alpha-2}}& 0\\
-(\alpha+1)\mathtt{a}^{-1} & 0 & 0 \\
0 & \mathtt{a}^{-1} & 0  \\
-(\alpha+1)\mathtt{a}^{-1}{(\mathtt{a}+1)^{-\alpha-2}}  & 0 & {\mathtt{a}^{-\alpha-1}}/\gamma\\
0 & \mathtt{a}^{-1}{(\mathtt{a}+1)^{-\alpha-2}} & 0\\
\end{pmatrix}\begin{pmatrix}
\dot x_3\\
\dot y_3\\
\dot \gamma_2
\end{pmatrix},
\end{equation*}
which has rank 3. Thus, Theorem~\ref{theorem-Phi-stationary} guarantees the existence of stationary vortex patch tripole. The reflection symmetry property with respect to the real axis  can be checked  similarly to the co-rotating pairs.

%%%%%%%%%%%%%%%%%%%%%%%%%%%%%%%%%%%%%%%%%%%%%%%%%
\section{Nested polygonal vortex patch equilibria}\label{sec:nest-poly}
In this section we shall construct $2m+1$ multipolar vortex equilibria in
which a central patch is surrounded by $2m$  satellite
patches centered at the vertices of two nested regular $m$-gons. The vertices of the polygons are either radially aligned with each other or out of phase by an angle $\pi/m$, and the patches on each polygon are identical with the same strength; see Figure~\ref{fig:poly}.  

More precisely, we shall desingularize the  
following system of point vortices  
\begin{equation}\label{q0}
\omega^0_0(z)=\pi\Big(\gamma_0\delta_{z_0(0)}(z)+\gamma_1\sum_{k=0}^{m-1}\delta_{z_{1k}(0)}(z)+\gamma_2\sum_{k=0}^{m-1}\delta_{z_{2k}(0)}(z)\Big)
\end{equation}
with 
\begin{equation}\label{z}
  z_0(0)=0\quad\textnormal{and}\quad  z_{jk}(0):=
  \begin{cases}
    d_1e^{\frac{2k\pi i}{m}} &\, \textnormal{if }\, j=1 \, \textnormal{and }\, 0\leq k\leq m-1,\\
    d_2e^{\frac{(2k+\vartheta)\pi i}{m}} &\, \textnormal{if }\, j=2 \, \textnormal{and }\, 0\leq k\leq m-1,
  \end{cases}
\end{equation}
  where  $\gamma_0,\gamma_1,\gamma_2\in \R\setminus\{0\}$,  $d_2>d_1>0$ and $\vartheta=0$ corresponds to  the aligned configuration    while  $\vartheta=1$ refers to  the staggered configuration.  
  Assuming that $z_0(t)=z_0(0)$ and  $z_{jk}(t) = e^{i\Omega t}z_{jk}(0)$, one may easily check that the system of $2m+1$ equations in  \eqref{alg-sysP} can be reduced to  
\begin{equation}\label{gG0}
\begin{split}
\gamma_j{\mathcal{F}}_j^{\alpha}(\lambda)&=0,\quad j=0,1,2,
\end{split}
\end{equation}
where $\lambda=(\Omega,\gamma_2)$ and 
\begin{equation}\label{p-gG2-012}
\begin{split}
{\mathcal{F}}_0^{\alpha}(\lambda)&:=\frac{\widehat{C}_\alpha}{2}\Big(\frac{\gamma_1}{d_1^{1+\alpha}}+\frac{\gamma_2}{d_2^{1+\alpha}}e^{\frac{\vartheta\pi i}{m}}\Big)\sum_{k=0}^{m-1}e^{\frac{2k\pi i}{m}},
\\
{\mathcal{F}}_1^{\alpha}(\lambda)&:=\Omega d_1-\frac{\widehat{C}_\alpha}{2d_1^{1+\alpha}}\Big({\gamma_0}+{\gamma_1}\sum_{k=1}^{m-1}\frac{1-e^{\frac{2k\pi i }{m}} }{\big|1 -e^{\frac{2k\pi i }{m}} \big|^{2+\alpha}}+\gamma_2\sum_{k=0}^{m-1}\frac{1 -e^{\frac{(2k+\vartheta)\pi i }{m}} d  }{\big|1 -e^{\frac{(2k+\vartheta)\pi i }{m}} d  \big|^{2+\alpha}}\Big),\quad 
\\
{\mathcal{F}}_2^{\alpha}(\lambda)&:=\Omega d_2-\frac{\widehat{C}_\alpha}{2d_2^{1+\alpha}}\Big({\gamma_0}+\gamma_1\sum_{k=0}^{m-1}\frac{1 -e^{\frac{(2k-\vartheta)\pi i }{m}}  d^{-1} }{\big|1 -e^{\frac{(2k-\vartheta)\pi i }{m}} d ^{-1} \big|^{2+\alpha}}+{\gamma_2}\sum_{k=1}^{m-1}\frac{1-e^{\frac{2k\pi i }{m}} }{\big|1 -e^{\frac{2k\pi i }{m}} \big|^{2+\alpha}}\Big),
\end{split}
\end{equation}
with $d:=d_2/d_1$.
By symmetry arguments, one may easily check that
\begin{gather*}
\sum_{k=0}^{m-1} 
  {e^{\frac{2k\pi i}{m}} } =0, \qquad \qquad\quad\; \sum_{k=1}^{m-1}\frac{1-e^{\frac{2k\pi i }{m}} }{\big|1 -e^{\frac{2k\pi i }{m}} \big|^{2+\alpha}}
=\frac12\sum_{k=1}^{m-1}{ \Big(2\sin\big({\frac{k\pi  }{m}}\big)  \Big)^{-\alpha}}
=:\frac{S_\alpha}{2},\\
\sum_{k=0}^{m-1}\frac{1 -  d ^{\pm 1} e^{\frac{(2k\pm\vartheta)\pi i}{m}}}{\big|1 -  d ^{\pm 1} e^{\frac{(2k\pm\vartheta)\pi i}{m}} \big|^{\alpha+2}}
=\sum_{k=0}^{m-1}\frac{1 -  d ^{\pm 1} \cos\big({\frac{(2k\pm\vartheta)\pi }{m}}\big)}{\big(1+( d ^{\pm 1})^2 - 2 d ^{\pm 1} \cos\big({\frac{(2k\pm\vartheta)\pi }{m}}\big) \big)^{\frac\alpha2+1}}
=:{T_\alpha^\pm( d ,\vartheta)}.\label{t}
 \end{gather*}
 Thus, the identities in  \eqref{p-gG2-012} become
  \begin{equation}\label{p-gG2}
 \begin{split}
{\mathcal{F}}_0^{\alpha}(\lambda)&=0,
\\
{\mathcal{F}}_1^{\alpha}(\lambda)&=\Omega d_1 -\frac{\widehat{C}_\alpha}{2d_1^{\alpha+1}}\Big[\gamma_0+\frac{\gamma_1}{2}S_\alpha +\gamma_2T_\alpha^+( d ,\vartheta)\Big],
\\
{\mathcal{F}}_2^{\alpha}(\lambda)&=\Omega d_2 -\frac{\widehat{C}_\alpha}{2d_2^{\alpha+1}}\Big[\gamma_0+\gamma_1T_\alpha^-( d ,\vartheta)+\frac{\gamma_2}{2}S_\alpha \Big].
\end{split}
\end{equation}
Moreover, the differential of the mapping ${\mathcal{Q}}^{\alpha}:=({\mathcal{Q}}_1^{\alpha},{\mathcal{Q}}_2^{\alpha})$ with respect to $\lambda=(\Omega,\gamma_2)$ is given by
\begin{equation}\label{jacob-mat}
D_{\lambda}{\mathcal{F}}^{\alpha}(\lambda)\begin{pmatrix} \dot\Omega\\ \dot \gamma_2\end{pmatrix} =\begin{pmatrix}
d_1 &  -\frac{\widehat{C}_\alpha }{2}\frac{ T_\alpha^+( d ,\vartheta)}{d_1^{1+\alpha}}\\
 d_2 &  -\frac{\widehat{C}_\alpha }{2} \frac{ S_\alpha}{2d_2^{1+\alpha}}
\end{pmatrix}
\begin{pmatrix} \dot\Omega\\ \dot \gamma_2\end{pmatrix}.
\end{equation}

If the Jacobian determinant is non-trivial, 
\begin{equation}\label{det-j-poly}
\det\big(D_\lambda \mathcal F^\alpha(\lambda)\big)=
-\frac{\widehat{C}_\alpha d_1}{2d_2^{\alpha+1}}\Big(\frac{ S_\alpha}{2}- d^{\alpha+2} T_\alpha^+( d ,\vartheta)\Big)\neq 0,
\end{equation}
then  the system \eqref{p-gG2} has a unique solution $\lambda^*=( \Omega^*,\gamma_2^*)$  given by  
\begin{equation}\label{gamma0-om0}
 \begin{split}
\gamma_2^*&:=\frac{\big( d ^{\alpha+2}-1\big){\gamma_0}+ \big(\frac12S_\alpha  d^{\alpha+2} - T_\alpha^-( d ,\vartheta)\big)\gamma_1}{\frac12S_\alpha- T_\alpha^+( d ,\vartheta) d ^{\alpha+2}},\\
\Omega^*&:=\frac{\widehat{C}_\alpha}{2(d_1^{\alpha+2}+d_2^{\alpha+2})}\Big(\gamma_0+\gamma_1\big(\tfrac12 S_\alpha+ T_\alpha^-( d ,\vartheta)\big)+\gamma_2^*\big(T_\alpha^+( d ,\vartheta)+\tfrac12 S_\alpha\big) \Big).
\end{split}
    \end{equation}
In order to  ensure that $\gamma_2$ is non-vanishing, one has to assume that $\gamma_0$ and $\gamma_1$ verify the condition 
 \begin{equation}\label{non-deg}
( d ^{\alpha+2}-1){\gamma_0}+ \big(\tfrac12S_\alpha  d ^{\alpha+2} - T_\alpha^-( d ,\vartheta)\big)\gamma_1\neq 0.
    \end{equation}

\begin{remark}
  For the Eulerian interaction $\alpha=0$, one may easily check that 
  $$
  S_\alpha=m-1\quad\textnormal{and}\quad T_\alpha^\pm( d ,\vartheta)=\frac{m}{1-(-1)^\vartheta  d ^{\pm m}}.
  $$ 
  It follows that \eqref{det-j-poly} and \eqref{non-deg} can be written as
  \begin{equation*}
    \frac{m-1}{2}+\frac{m  d ^2}{1-(-1)^\vartheta  d ^m}\neq 0, \quad  \big( d ^2-1\big){\gamma_0}+\Big(\frac{m-1}{2} d ^2- \frac{m  d ^{m}}{ d ^{m}-(-1)^\vartheta }\Big){\gamma_1}\neq 0.
  \end{equation*}
  This amounts to the study the of the roots of two polynomials of order $m$.  A detailed  analysis is given  in \cite[Pages 10--13]{Ce} and  \cite[Pages  18--22]{Aref}. 
\end{remark}

\begin{remark}
  While for general non-degenerate equilibria the differential $D_\lambda \mathcal P^\alpha(\lambda^*)$ was never onto, in this more symmetric setting the differential $D_\lambda \mathcal F^\alpha(\lambda^*)$ \emph{is} onto whenever \eqref{det-j-poly} holds. This will enable us to directly apply the implicit function theorem to the vortex patch equations, avoiding Lemma~\ref{abstract-lemma} and the use of integral identities.
\end{remark}
\subsection{Boundary equations}
%%%%%%%%%%%%%%%%%%%%%%%%%%%%%%%%%%%%%%%%%%%%%%%%%
Let $m$ be a positive integer  and   $\mathcal{O}_0^\varepsilon, \mathcal{O}_1^\varepsilon$, $\mathcal{O}_2^\varepsilon$   be three bounded simply connected domains containing  the origin and contained in the ball $B(0,2)$. Assume in addition  that $\mathcal{O}_0^\varepsilon$ is $m$-fold symmetric, that is
\begin{equation}\label{Nsymm}
e^{\frac{2\pi i}{m}}\mathcal{O}_0^\varepsilon=\mathcal{O}_0^\varepsilon,
\end{equation}
and $\mathcal{O}_0^\varepsilon,\mathcal{O}_1^\varepsilon$ and $\mathcal{O}_2^\varepsilon$ are  symmetric about the real axis.
Given  $b_0,b_1,b_2\in \R_+$ and $d_1,d_2\in \R_+$ and $\varepsilon\in(0,\varepsilon_0)$, with  $\varepsilon_0\ll 1$,  we define the domains
\begin{align}
\mathcal{D}_{00}^\varepsilon&:= \varepsilon b_0 \mathcal{O}_0^\varepsilon,\label{Dj0}\\
\mathcal{D}_{1j}^\varepsilon&:= e^{\frac{2j\pi i}{m}}\big(\varepsilon  b_1\mathcal{O}_1^\varepsilon+d_1\big),\quad\quad\quad\quad j=0,\ldots, m-1, \label{Dj1} \\ 
\mathcal{D}_{2j}^\varepsilon&:=e^{\frac{(2j+\vartheta)\pi i}{m}}\big(\varepsilon  b_2\mathcal{O}_2^\varepsilon+d_2\big),\quad\quad\;\,\, j=0,\ldots, m-1, \label{Dj2}
\end{align}
where we  recall that $\vartheta=0$ corresponds to
the aligned configuration    and  $\vartheta=1$ refers to   the staggered configuration. 
Let $\gamma_0,\gamma_1,\gamma_2\in \R\setminus\{0\}$ and  consider the initial vorticity  
\begin{equation}\label{omega0sym}
\omega_{0}^\varepsilon=\frac{\gamma_0}{\varepsilon^2 b_0^2}\chi_{\mathcal{D}_{00}^\varepsilon}+\frac{\gamma_1}{\varepsilon^2 b_1^2}\sum_{j=0}^{m-1}\chi_{\mathcal{D}_{1j^\varepsilon}}+\frac{\gamma_2}{\varepsilon^2 b_2^2}\sum_{j=0}^{m-1}\chi_{\mathcal{D}_{2j}^\varepsilon}.
\end{equation}
Now assume that the evolution of $\omega_{0}^\varepsilon$ is prescribed by the \eqref{eqn:boundry00} with $U=0$. While this initially gives $N=2m+1$ equations, one for each patch, using the fact that 
$$
\mathcal{D}_{1j}^\varepsilon=e^{\frac{2j\pi i}{m}}\mathcal{D}_{10}^\varepsilon,\quad\textnormal{and}\quad  \mathcal{D}_{2j}^\varepsilon=e^{\frac{2j\pi i}{m}}\mathcal{D}_{20}^\varepsilon,
$$
we shall show that this system can be reduced to a system of three equations, on the boundaries of  $\mathcal{O}_0^\varepsilon, \mathcal{O}_1^\varepsilon$ and  $\mathcal{O}_2^\varepsilon$.
\subsubsection{Euler equation}
From \eqref{eq2E0} one has
\begin{equation}\label{eqE0}
\begin{split}
&\Re\big\{\gamma_0\big(\Omega\overline{z}+V^\varepsilon(z)\big)z'\big\}=0,\quad \forall z\in \partial \mathcal{D}_{00}^\varepsilon,
\\ &\Re\big\{\gamma_1\big(\Omega\overline{z}+V^\varepsilon(z)\big)z'\big\}=0,\quad \forall z\in \partial \mathcal{D}_{1n}^\varepsilon, \quad n=0,\ldots,m-1,
\\ &\Re\big\{\gamma_2\big(\Omega\overline{z}+V^\varepsilon(z)\big)z'\big\}=0,\quad \forall z\in \partial \mathcal{D}_{2n}^\varepsilon, \quad n=0,\ldots,m-1,
\end{split}
\end{equation}
where  $z'$ denotes a tangent vector to the boundary  at the point $z$ and
 \begin{align*}
V^\varepsilon(z) &=\frac{\gamma_0}{2\varepsilon^2 b_0^2}\fint_{\partial \mathcal{D}_{00}^\varepsilon}\frac{\overline{\xi}-\overline{z}}{\xi-z}d\xi+\sum_{\ell=1}^2\frac{\gamma_\ell}{2\varepsilon^2 b_\ell^2}\sum_{k=0}^{m-1}\fint_{\partial \mathcal{D}_{\ell k}^\varepsilon}\frac{\overline{\xi}-\overline{z}}{\xi-z}d\xi
.
\end{align*}
In view of \eqref{Dj1} and \eqref{Dj2}, the change  of variables $\xi\mapsto e^{\frac{2k\pi i}{m}}\xi$  leads to
\begin{equation}\label{I}
V^\varepsilon(z)=\frac{\gamma_0}{2\varepsilon^2 b_0^2}\fint_{\partial \mathcal{D}_{00}^\varepsilon}\frac{\overline{\xi}-\overline{z}}{\xi-z}d\xi+\sum_{\ell=1}^2\frac{\gamma_\ell}{2\varepsilon^2 b_\ell^2}\sum_{k=0}^{m-1}\fint_{\partial \mathcal{D}_{\ell 0}^\varepsilon}\frac{\overline{\xi}-e^{\frac{2k\pi i}{m}}\overline{z}}{e^{\frac{2k\pi i}{m}}\xi- z}d\xi
.
\end{equation}
For any $n\in\{1,\ldots,m-1\}$ one has
\begin{align*}
V^\varepsilon\big(e^{\frac{2n\pi i}{m}}z\big)&=\frac{\gamma_0}{2\varepsilon^2 b_0^2}\fint_{\partial \mathcal{D}_{00}^\varepsilon}\frac{\overline{\xi}-e^{-\frac{2n\pi i}{m}}\overline{z}}{\xi-e^{\frac{2n\pi i}{m}}z}d\xi+e^{-\frac{2n\pi i}{m}}\sum_{\ell=1}^2\frac{\gamma_\ell}{2\varepsilon^2 b_\ell^2}\sum_{k=0}^{m-1}\fint_{\partial \mathcal{D}_{\ell 0}^\varepsilon}\frac{\overline{\xi}-e^{\frac{2(k-n)\pi i}{m}}\overline{z}}{e^{\frac{2(k-n)\pi i}{m}}\xi- z}d\xi.
\end{align*}
In view of \eqref{Nsymm}, making the change of variables $\xi\mapsto e^{\frac{2n\pi i}{m}}\xi$ in the first integral gives
\begin{equation}\label{Iej}
V^\varepsilon\big(e^{\frac{2n\pi i}{m}}z\big)=e^{-\frac{2n\pi i}{m}}V^\varepsilon(z).
\end{equation}
From \eqref{Dj1}, \eqref{Dj2} and \eqref{Iej} we conclude that if \eqref{eqE0} is satisfied for $n=0$, then it also satisfied for all $n\in\{1,\ldots,m-1\}$. Thus, the system  \eqref{eqE0} is reduced to
\begin{align}
&\Re\big\{\gamma_j\big(\Omega\overline{z}+V^\varepsilon(z)\big)z'\big\}=0,\quad \forall z\in \partial \mathcal{D}_{j0}^\varepsilon,\quad j=0,1,2.\label{eqEj}
\end{align}
We assume that the boundaries of the domains  $\mathcal{O}_j^\varepsilon$, $j=0,1,2$ in \eqref{Dj1}, \eqref{Dj2} are parametrized by conformal mappings $\phi_j\colon\mathbb{T}\to  \partial{\mathcal{O}_j^\varepsilon}$ satisfying
\begin{align*}
  \phi_j (w)= w+\varepsilon b_j f_j(w) \quad\textnormal{with}\quad f_j(w)=\sum_{m=1}^\infty\frac{a_m^j}{w^{m}},\quad a_m^j\in \R.
\end{align*}
Following the steps established in Section \ref{subsec:euler}, more precisely \eqref{rotn+100}, we may conclude that the dynamics the three boundaries is governed by the system
\begin{align} 
 \gamma_0\,\mathcal{G}_0^0(\varepsilon,f;\lambda)(w)&=  -\gamma_0\Im \Big\{
  \Big(
  \Omega\big(\varepsilon b_0\overline{w}+\varepsilon^2 b_0^2 f_0(\overline{w})\big)+{\gamma_0} \overline{\mathcal{I}^0}[\varepsilon, f_0](w)\notag
  \\
  &\qquad\qquad+\sum_{\ell=1}^2\gamma_\ell\sum_{k=0}^{m-1} \mathcal{K}_{k}^0[\varepsilon,f_\ell, f_0](w)
    \Big)
  w\big(1+\varepsilon b_0 f'_0(w)\big)
  -\frac{\gamma_0}{2} f_0'(w)\Big\}=0,\label{G0}\\
 \gamma_j\, \mathcal{G}_j^0(\varepsilon,f;\lambda)(w)&:=  -\gamma_j\Im \Big\{
  \Big(
  \Omega\big(\varepsilon b_j\overline{w}+\varepsilon^2 b_j^2 f_j(\overline{w})+d_j\big)+{\gamma_j} \overline{\mathcal{I}^0}[\varepsilon, f_j](w)+\gamma_0\, \mathcal{K}_{0}^0[\varepsilon,f_0, f_j](w)\notag
  \\
  &  \qquad\qquad+\sum_{\ell=1}^2\gamma_\ell\sum_{k=\delta_{\ell j}}^{m-1} \overline{\mathcal{K}_k^0}[\varepsilon,f_\ell, f_j](w)
    \Big)
  w\big(1+\varepsilon b_j f'_j(w)\big)
  -\frac{\gamma_j}{2} f_j'(w)\Big\}=0,\label{Gj}
\end{align}
for all $w\in\T$ and  $j=1,2$, where
\begin{align}\label{I0}
\overline{\mathcal{I}^0}[\varepsilon, f_j](w)&= \frac12\fint_{\mathbb{T}}\frac{\overline{w}-\overline{\tau}+\varepsilon b_j\big({f_j(\overline{\tau})}-{f_j(\overline{w})}\big)}{w-\tau+\varepsilon b_j\big(f_j(\tau)-f_j(w)\big)}f_j'(\tau)d\tau\notag
\\ 
&\qquad
+\fint_{\mathbb{T}}\frac{i\Im\big\{(w-\tau)\big({f_j(\overline{\tau})}-{f_j(\overline{w})}\big)\big\}}{(w-\tau)\big(w-\tau+\varepsilon b_j f_j(\tau)-\varepsilon b_j f_j(w)\big)}d\tau,
\\
\overline{\mathcal{K}_k^0}[\varepsilon,f_\ell, f_n](w)&:=\frac12\fint_\T \frac{
  \big(\overline{\tau}+\varepsilon b_\ell f_\ell(\overline{\tau})\big) \big(1+\varepsilon b_\ell f_\ell'({\tau})\big)}
  {\nu_{k\ell j}(\varepsilon b_\ell  {\tau}+\varepsilon^2 b_\ell^2 f_\ell({\tau}) +d_\ell) - \varepsilon b_n\big({w}+\varepsilon b_n f_n({w})\big) -  
  d_j} \, d\tau, \label{K}
   \end{align}
with the convention $d_0=0$ and 
$
\nu_{k\ell j}:=\exp\big(2k\pi i/m+(\delta_{2\ell}-\delta_{2j})\vartheta\pi i/m\big).
$
\subsubsection{gSQG equations}
From  \eqref{eqn:boundry00} and \eqref{vgsqg} one has
\begin{equation}\label{eqn:boundry2}
\begin{split}
&\Re\big\{\gamma_{0}\big(\Omega {z}+i{v^\varepsilon(z)}\big)\overline{z'}\big\}=0\quad  \forall  z\in\displaystyle\partial \mathcal{D}_{00}^\varepsilon, \\
&\Re\big\{\gamma_{1}\big(\Omega {z}+i{v^\varepsilon(z)}\big)\overline{z'}\big\}=0\quad  \forall z\in\displaystyle\partial \mathcal{D}_{1n}^\varepsilon, \quad  n=0,\ldots,m-1,\\
&\Re\big\{\gamma_{2}\big(\Omega {z}+i{v^\varepsilon(z)}\big)\overline{z'}\big\}=0\quad \forall z\in\displaystyle\partial \mathcal{D}_{2n}^\varepsilon, \quad  n=0,\ldots,m-1,
\end{split}
\end{equation}
where  $z'$ denotes a tangent vector to the boundary  at the point $z$ and 
\begin{equation*}
{v^\varepsilon(z)} =\frac{C_\alpha}{2\pi}\frac{\gamma_0}{\varepsilon^2 b_0^2}\int_{\partial\mathcal{D}_{00}^\varepsilon}\frac{d\xi}{|z-\xi|^\alpha}+\sum_{m=1}^2\frac{\gamma_\ell}{\varepsilon^2 b_\ell^2}\sum_{k=0}^{m-1}\frac{C_\alpha}{2\pi}\int_{\partial\mathcal{D}_{\ell k}^\varepsilon}\frac{d\xi}{|z-\xi|^\alpha}
\end{equation*}
for all $z\in\mathbb{C}$. In view of  \eqref{Dj1} and \eqref{Dj2}, a suitable change of variables gives
\begin{equation*}
{v^\varepsilon(z)} =\frac{C_\alpha}{2\pi}\frac{\gamma_0}{\varepsilon^2 b_0^2}\int_{\partial\mathcal{D}_{00}^\varepsilon}\frac{d\xi}{|z-\xi|^\alpha}+\sum_{\ell=1}^2\frac{\gamma_\ell}{\varepsilon^2 b_\ell^2}\sum_{k=0}^{m-1}\frac{C_\alpha}{2\pi}\int_{\partial\mathcal{D}_{\ell 0}^\varepsilon}\frac{e^{\frac{2\pi k i}{m}}d\xi}{|z-e^{\frac{2\pi k i}{m}}\xi|^\alpha}. 
\end{equation*}
Observe that for any $n\in\{1,\ldots,m-1\}$ one has
\begin{equation*}
{v^\varepsilon\big(e^{\frac{2\pi n i}{m}}z\big)} =\frac{C_\alpha}{2\pi}\frac{\gamma_0}{\varepsilon^2 b_0^2}\int_{\partial\mathcal{D}_{00}^\varepsilon}\frac{d\xi}{|e^{\frac{2\pi n i}{m}}z-\xi|^\alpha}+e^{\frac{2\pi n i}{m}}\sum_{\ell=1}^2\frac{\gamma_\ell}{\varepsilon^2 b_\ell^2}\sum_{k=0}^{m-1}\frac{C_\alpha}{2\pi}\int_{\partial\mathcal{D}_{\ell 0}^\varepsilon}\frac{e^{\frac{2\pi (k-n) i}{m}}}{|e^{-\frac{2\pi (k-n) i}{m}}z-\xi|^\alpha}d\xi.
\end{equation*}
From \eqref{Nsymm}, the change of variable $\xi \mapsto e^{\frac{2\pi n i}{m}} \xi $ in the first integral leads to
\begin{equation*}
{v^\varepsilon\big(e^{\frac{2\pi n i}{m}}z\big)} =e^{\frac{2\pi n i}{m}}v^\varepsilon\big(z\big).
\end{equation*}
From the last identity and by \eqref{Dj1} and \eqref{Dj2}, we conclude that the system \eqref{eqn:boundry2} of $2m+1$ equations can be reduced to a system of three equations, 
\begin{equation*}
\begin{split}
&\gamma_{j}\Re\big\{\big(\Omega {z}+i{v^\varepsilon(z)}\big)\overline{z'}\big\}=0\quad \textnormal{for all}\quad z\in\displaystyle\partial \mathcal{D}_{j0}^\varepsilon,\quad j=0,1,2. \\
\end{split}
\end{equation*}
Assume that the boundaries of the domains  $\mathcal{O}_j^\varepsilon$, $j=0,1,2$ in \eqref{Dj1}, \eqref{Dj2} are parametrized by the conformal mappings $\phi_j\colon\mathbb{T}\to  \partial{\mathcal{O}_j^\varepsilon}$ satisfying
\begin{align*}
  \phi_j (w)= w+\varepsilon|\varepsilon|^\alpha b_j^{1+\alpha} f_j(w) \quad\textnormal{with}\quad f_j(w)=\sum_{m=1}^\infty\frac{a_m^j}{w^{m}},\quad a_m^j\in \R.
\end{align*}
Then, from \eqref{Fjsqgalpha0} one may conclude that the dynamics of three boundaries is described by 
\begin{align} \label{sys2sqg1}
 \gamma_0\,\mathcal{G}_0^\alpha(\varepsilon,f;\lambda)(w)&:= \gamma_0\Im \Big\{
  \Big(
  \Omega\big(\varepsilon b_0 w+b_0^{\alpha+2}\varepsilon^2|\varepsilon|^\alpha f_0(w) \big)+\gamma_0\mathcal{I}^\alpha[\varepsilon, f_0](w) \notag 
  \\
  &\quad
  +\sum_{\ell=1}^{2} \gamma_\ell\sum_{k=0}^{m-1} \mathcal{K}^{\alpha}_{k}[\varepsilon,f_\ell,f_0](w)
    \Big)
 \overline{ w}\big(1+b_0^{\alpha+1}\varepsilon|\varepsilon|^\alpha f'_0(\overline{w})\big)
  -\gamma_0\mu_\alpha f_0'(\overline {w})\Big\}=0,
\\
\gamma_j\, \mathcal{G}_j^\alpha(\varepsilon,f;\lambda)(w)&:=\gamma_j\Im \Big\{
  \Big(
  \Omega\big(\varepsilon b_j w+b_j^{\alpha+2}\varepsilon^2|\varepsilon|^\alpha f_j(w)+d_j\big)+\gamma_j \mathcal{I}^\alpha[\varepsilon, f_j](w) +\gamma_0  \mathcal{K}^{\alpha}_{0}[\varepsilon,f_0,f_j](w)
 \notag \\& \quad +\sum_{\ell=1}^{2} \gamma_\ell\sum_{k=\delta_{\ell j}}^{m-1}  \mathcal{K}^{\alpha}_{k}[\varepsilon,f_\ell,f_j](w)  \Big)
 \overline{ w}
  \big(1+b_j^{\alpha+1}\varepsilon|\varepsilon|^\alpha f_j( \overline{ w})\big)
  -\gamma_j\mu_\alpha f_j'(\overline{w})\Big\}=0,\label{sys2sqg2}
\end{align}
for all   $w\in \T$ and $j=1,2$,
 where $\mu_\alpha$ is defined in \eqref{mu} and 
\begin{align}
\mathcal{I}^\alpha[\varepsilon, f_j](w)&= -C_{\alpha} \fint_\T 
  \frac{f_j'(\tau)}{|\phi_j(\tau)-\phi_j(w)|^\alpha}\, d\tau\nonumber
  \\ 
  &\quad\;
  +\alpha C_{\alpha} \fint_\T\int_0^1 
  \frac{\Re\big[\big(f_j(\tau)-f_j(w)\big)\big(
 \overline{\tau} - \overline{w}\big)\big]+\varepsilon|\varepsilon|^\alpha t|f_j(\tau)-f_j(w)|^2}{|w-\tau+t\varepsilon|\varepsilon|^\alpha\big(f_j(\tau)-f_j(w)\big)|^{2+\alpha}} dt\, d\tau, \label{Ialpha}
 \\
\mathcal{K}^\alpha_{k}[\varepsilon,f_\ell,f_j](w)&:=
 \frac{\alpha C_\alpha}{ b_ \ell}\bigg[ \fint_\T\int_0^1 \frac{
\Re\big[\big(\nu_{k\ell j}  d_\ell-d_j\big)\big( b_ \ell \overline{\nu_{k\ell j}} 
 \overline{\phi_\ell(\tau)} -  b_j \overline{\phi_j(w)}\big)\big] }
  { | \nu_{k\ell j}\big(
t\varepsilon b_\ell \phi_\ell(\tau)+d_\ell\big) - \big(t\varepsilon b_j \phi_j(w) +d_j\big)|^{\alpha+2}} \phi_\ell'(\tau)dt\, d\tau
\nonumber
\\
&  \qquad + \fint_\T\int_0^1\frac{
t \varepsilon | \nu_{k\ell j}
 b_ \ell \phi_ \ell(\tau) -  b_j \phi_j(w) |^2\phi_\ell'(\tau) }
  { |\nu_{k\ell j} \big(
t\varepsilon b_\ell \phi_\ell(\tau)+d_\ell\big) - \big(t\varepsilon b_j \phi_j(w) +d_j\big)|^{\alpha+2}} dt\, d\tau\bigg]\nu_{k\ell j} ,\label{Kalpha}
\end{align}
with the convention $d_0=0$ and 
$
\nu_{k\ell j}=\exp\big(2k\pi i/m+(\delta_{2\ell}-\delta_{2j})\vartheta\pi i/m\big).
$

\subsection{Existence of the nested polygonal vortex patch equilibria}
For any  $m\geq 2$ we define the Banach spaces 
\begin{equation}\label{Valpha}
\begin{split}
\mathcal{V}^\alpha :=V^\alpha_m\times V^\alpha_1\times V^\alpha_1 \;\;&\textnormal{and}\;\;  
\mathcal{W}^\alpha:=W^\alpha_m\times W^\alpha_1\times W^\alpha_1,\\
\textnormal{with}\quad   V_m^\alpha:=
        \big\{f\in V_1^\alpha: f\big(e^{\frac{i2\pi}{m}}w\big)=e^{\frac{i2\pi}{m}}f(w)\big\}\;\;&\textnormal{and}\;\;  W^\alpha_m:=
        \big\{g\in  W_1^\alpha : g\big(e^{\frac{i2\pi}{m}}z\big)=g(z)\big\}, 
\end{split}
\end{equation}
 where $V_1^\alpha$ and $W_1^\alpha$ are defined by
\begin{equation}\label{Valpha1}
   V_1^\alpha:=
        \big\{f\in  V_0^\alpha : \overline{f(w)}=f(\overline{w})\big\}\quad \textnormal{and}\quad  W^\alpha_1:=
        \big\{g\in  W_0^\alpha : \overline{g(w)}=-g(\overline w)\big\}
\end{equation}
and $V_0^\alpha$ and $W_0^\alpha$ are given by \eqref{Valpha1}.  Note that if $f_0\in V_m^\alpha$, the expansion of the associated conformal mapping is given by
$$
\phi_0(w)=w+\varepsilon |\varepsilon|^\alpha b_0^{1+\alpha} f_0(w)=w\Big(1+\varepsilon |\varepsilon|^\alpha b_0^{1+\alpha} \sum_{n=1}^{\infty}\frac{a_{nm-1}}{w^{nm}}\Big).
$$
which provides the $m$-fold symmetry of the associated patch. 
We denote by $B^\alpha$ the open unit ball in $\mathcal{V}^\alpha$. Define the mapping
$$
\mathcal{G}^\alpha(\varepsilon,f;\lambda):=\big(\mathcal{G}^\alpha_0(\varepsilon,f;\lambda),\mathcal{G}^\alpha_1(\varepsilon,f;\lambda), \mathcal{G}^\alpha_2(\varepsilon,f;\lambda)\big), 
$$
where $f=(f_0,f_1,f_2)$, $\lambda=(\Omega,\gamma_2)$, $\mathcal{G}^\alpha_j$ is given by \eqref{sys2sqg1}--\eqref{sys2sqg2}  for $\alpha\in(0,1)$ and   by  \eqref{G0}--\eqref{Gj} for $\alpha=0$.

\vspace{0.2cm}

The proof of the existence of the co-rotating nested polygons  follows from the next theorem, which
 gives the full statement of Theorem~\ref{thm:informal-1polygon}.

\begin{theorem}\label{thm:polygon} 
  Let $\alpha \in [0,1)$, $b_1,b_2,d_1,d_2\in (0,\infty)$ such that $d=d_2/d_1>0$ satisfies \eqref{det-j-poly}, and let $\gamma_0,\gamma_1\in \R\setminus\{0\}$ such that \eqref{non-deg} holds. Then 
\begin{enumerate}[label=\rm(\roman*)]
\item There exists $\varepsilon_0 > 0$ and a neighborhood $\Lambda$ of $\lambda^*$ in $\R^2$ such that  $\mathcal{G}^\alpha$ 
can be extended to a $C^1$ mapping $(-\varepsilon_0,\varepsilon_0)\times B^\alpha\times \Lambda \to \mathcal{W}^\alpha$.
\item $\mathcal{G}^\alpha(0,0;\lambda^*)=0,
$
where $\lambda^*=(\Omega^*,\gamma_2^*)$ is given by  \eqref{gamma0-om0}.
\item  The linear operator 
$D_{(f;\lambda)}\mathcal{G}^\alpha (0,0;\lambda^*)\colon  \mathcal{V}^\alpha\times\R^2 \to  \mathcal{W}^\alpha$ is an isomorphism.
\item There exists $\varepsilon_1>0$ and a unique $C^1$ function $(f,\lambda)\colon (-\varepsilon_1,\varepsilon_1)\to   B^\alpha\times\R^2$ such that 
\begin{equation*}\label{gf1f2-poly}
\mathcal{G}^\alpha \big(\varepsilon, f(\varepsilon);\lambda(\varepsilon)\big)=0,
\end{equation*}
with $\lambda(\varepsilon)=\lambda^*+o(\varepsilon)$ and 
\begin{gather}
f(\varepsilon)=\Xi_\alpha\,\Big(0,\frac{\varepsilon b_1\mathcal{Q}_1^\alpha}{\gamma_1d_1^{\alpha+2}}\overline{w} ,\frac{\varepsilon b_2\mathcal{Q}_2^\alpha}{\gamma_2^* d_2^{\alpha+2}}\overline{w}\Big)+o(\varepsilon),
\notag \\
\label{Qjalpha}
\mathcal{Q}_j^\alpha:=
\gamma_0+ \sum_{\ell=1}^2\gamma_\ell\sum_{k=\delta_{\ell j}}^{m-1}  \frac{\big(e^{\frac{2k\pi i}{m}+(\delta_{2\ell}-\delta_{2j})\frac{\vartheta\pi i}{m}} d_\ell /d_j-1\big)^2
}{ |e^{\frac{2k\pi i}{m}+(\delta_{2\ell}-\delta_{2j})\frac{\vartheta\pi i}{m}} d_\ell/d_j-1|^{\alpha+4}}, \quad \Xi_\alpha:=\frac{(\alpha+2)\Gamma(1-\frac\alpha2)\Gamma(3-\frac\alpha2)}{4\Gamma(2-\alpha)}.
\end{gather}
\item For all  $\varepsilon\in (-\varepsilon_1,\varepsilon_1)\setminus\{0\}$  the domains $\mathcal{O}_j^\varepsilon$, whose boundaries are given by the conformal parametrizations $\phi_j^\varepsilon=\operatorname{Id}+\varepsilon|\varepsilon|^\alpha b_j^{1+\alpha} f_j\colon\mathbb{T}\to \partial \mathcal{O}_j^\varepsilon$,   are
 strictly convex.

\end{enumerate}
\end{theorem}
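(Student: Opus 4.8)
The plan is to verify the three hypotheses (i)--(iii), which are precisely the inputs of the classical implicit function theorem, and then to harvest (iv)--(v) as in the proof of Theorem~\ref{prop:ift}. The conceptual point here is that, unlike the general case, we can dispense with the abstract Lemma~\ref{abstract-lemma} entirely: the symmetry reduction has already collapsed the system to three boundary equations with two free parameters $\lambda=(\Omega,\gamma_2)$, and these two parameters turn out to match exactly the finite-dimensional degeneracy of $D_f\mathcal G^\alpha(0,0;\lambda^*)$. For (i), each $\mathcal G_j^\alpha$ is built from the self-interaction term $\mathcal I^\alpha[\varepsilon,f_j]$, handled exactly as in Proposition~\ref{proposition2}(i) following \cite{Hmidi-Mateu}, together with the mutual-interaction kernels $\mathcal K_k^\alpha[\varepsilon,f_\ell,f_j]$ of \eqref{Kalpha}; since the representative patches $\mathcal D_{j0}^\varepsilon$ stay a fixed distance apart for $\varepsilon$ small, these kernels are smooth and depend $C^1$-ly on $(\varepsilon,f;\lambda)$. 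The one genuinely new verification is that $\mathcal G^\alpha$ respects the symmetric target spaces of \eqref{Valpha}: that $\mathcal G_0^\alpha$ lands in the $m$-fold symmetric space $W_m^\alpha$ and $\mathcal G_1^\alpha,\mathcal G_2^\alpha$ in the reflection-symmetric space $W_1^\alpha$. Both follow from the covariance identity $V^\varepsilon(e^{2\pi i n/m}z)=e^{-2\pi i n/m}V^\varepsilon(z)$ (and its gSQG analogue established in the reduction) together with the reflection symmetry of the configuration.

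Part (ii) is immediate: the analogue of Proposition~\ref{proposition2}(ii) in this reduced setting gives $\mathcal G_j^\alpha(0,0;\lambda)(w)=\Im\{\mathcal F_j^\alpha(\lambda)\,\overline w\}$ with $\mathcal F_j^\alpha$ as in \eqref{p-gG2}, so $\mathcal G^\alpha(0,0;\lambda^*)=0$ reduces to the statement that $\lambda^*=(\Omega^*,\gamma_2^*)$ of \eqref{gamma0-om0} solves $\mathcal F_1^\alpha=\mathcal F_2^\alpha=0$ (recall $\mathcal F_0^\alpha\equiv0$ because $\sum_{k=0}^{m-1}e^{2\pi ik/m}=0$).

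Part (iii) is the heart of the matter, and I would prove it by a mode-by-mode analysis of the splitting $D_{(f;\lambda)}\mathcal G^\alpha(0,0;\lambda^*)=D_f\mathcal G^\alpha(0,0;\lambda^*)+D_\lambda\mathcal G^\alpha(0,0;\lambda^*)$. By Proposition~\ref{proposition2}(iii)--(iv) the operator $D_f\mathcal G^\alpha$ acts diagonally, sending $\overline w^{\,n}$ to the frequencies $w^{\pm(n+1)}$ with the nonvanishing multipliers $M_n^\alpha$. On the central factor $V_m^\alpha$ the admissible indices are $n\equiv-1\pmod m$, so the lowest one, $n=m-1$, already produces the lowest admissible frequency $w^{m}$ of $W_m^\alpha$; hence $D_{f_0}\mathcal G_0^\alpha\colon V_m^\alpha\to W_m^\alpha$ is an isomorphism \emph{onto all of} $W_m^\alpha$, with no deficiency (this is where $m\ge2$ is used). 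On each outer factor the single frequency $w^{\pm1}$ is missed, so $D_{f_j}\mathcal G_j^\alpha\colon V_1^\alpha\to\widetilde W_1^\alpha:=\{g\in W_1^\alpha:c_1=0\}$ is an isomorphism onto a codimension-one subspace for $j=1,2$. Thus $D_f\mathcal G^\alpha$ is an isomorphism onto $W_m^\alpha\times\widetilde W_1^\alpha\times\widetilde W_1^\alpha$, whose complement in $\mathcal W^\alpha$ is the two-real-dimensional space $\mathbb W$ spanned by the $w^{\pm1}$ modes of the two outer factors. Differentiating the identity of (ii) gives $D_\lambda\mathcal G^\alpha(0,0;\lambda^*)\dot\lambda=\big(0,\Im\{(D_\lambda\mathcal F_1^\alpha\dot\lambda)\overline w\},\Im\{(D_\lambda\mathcal F_2^\alpha\dot\lambda)\overline w\}\big)$, which takes values in $\mathbb W$ and defines a linear isomorphism $\R^2\to\mathbb W$ exactly when the Jacobian \eqref{jacob-mat} is invertible, i.e.\ under the nondegeneracy assumption \eqref{det-j-poly}. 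Together with the trivial kernels this shows $D_{(f;\lambda)}\mathcal G^\alpha(0,0;\lambda^*)$ is an isomorphism.

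Finally, parts (iv) and (v) are routine. The implicit function theorem applied to (i)--(iii) yields the unique $C^1$ curve $(f(\varepsilon),\lambda(\varepsilon))$ with $\lambda(\varepsilon)=\lambda^*+o(\varepsilon)$; differentiating $\mathcal G^\alpha(\varepsilon,f(\varepsilon);\lambda(\varepsilon))=0$ at $\varepsilon=0$ and inverting $D_{(f;\lambda)}\mathcal G^\alpha(0,0;\lambda^*)$ produces the leading-order profiles, the computation of $\partial_\varepsilon\mathcal G_j^\alpha(0,0;\lambda^*)$ mirroring \eqref{f0dif-eps} but with the interaction contributions assembled into the coefficients $\mathcal Q_j^\alpha$ of \eqref{Qjalpha} and the normalization \eqref{gam1hatg}. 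The strict convexity in (v) follows verbatim from the curvature estimate concluding the proof of Theorem~\ref{prop:ift}, since after the same bootstrap in H\"older regularity $\phi_j^\varepsilon=\operatorname{Id}+O(\varepsilon|\varepsilon|^\alpha)$ in $C^2$. I expect the only real obstacle to be the bookkeeping in (iii): one must check carefully that the degeneracy of $D_f\mathcal G^\alpha$ is \emph{exactly} two-dimensional and is spanned by the $w^{\pm1}$ modes of the two outer patches, and that the very same projection onto those modes recovers the matrix $D_\lambda\mathcal F^\alpha$ whose invertibility is \eqref{det-j-poly}.
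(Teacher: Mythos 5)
Your proposal is correct and follows essentially the same route as the paper: exploiting the symmetry reduction so that the classical implicit function theorem applies directly (bypassing Lemma~\ref{abstract-lemma}), with part (iii) proved by the same splitting $D_f\mathcal G^\alpha + D_\lambda\mathcal G^\alpha$ — the diagonal multipliers $M_n^\alpha$ of Proposition~\ref{proposition2} covering all modes except the $\Im\{w\}$ modes of the two outer components, which are exactly filled by $D_\lambda\mathcal F^\alpha$ under \eqref{det-j-poly} — and part (iv) obtained by inverting the linearization against $\partial_\varepsilon\mathcal G^\alpha(0,0;\lambda^*)$. Your mode-by-mode bookkeeping (admissible source indices $n\equiv-1\pmod m$ mapping onto target frequencies $\pm nm$, and the codimension-one deficiency of each outer factor) is just a more explicit rendering of the paper's displayed formula for $D_{(f;\lambda)}\mathcal G^\alpha(0,0;\lambda^*)$.
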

\begin{proof}
 The regularity of the nonlinear operator $\mathcal{G}^\alpha $ follows from Proposition~\ref{proposition2}. In order to prove  the reflection symmetry property we shall assume that the Fourier coefficients  of $f_1,f_2$ are real,  that is
\begin{equation}\label{ref-sym4}
\overline{f_j(w)}=f_j(\overline{w}), 
\end{equation}
and prove that
\begin{equation}\label{ref-sym24}
\mathcal{G}_j^\alpha (\varepsilon,f;\lambda)(\overline{w})=-\mathcal{G}_j^\alpha (\varepsilon,f)({w}). 
\end{equation}
  It is obvious that if $f_1, f_2$ satisfy \eqref{ref-sym4}, then  
  $$w\mapsto  \Im \big\{
  \Omega\big(\varepsilon b_j w+\varepsilon^2|\varepsilon|^\alpha b_j^{2+\alpha}  f_j(w) +d_j \big)
 \overline{ w}\big(1+\varepsilon|\varepsilon|^\alpha b_1^{1+\alpha} \overline{f_j'(w)} \big)
  -\mu_\alpha\, \gamma_j \overline{f_j'(w)}\big\}
  $$
   satisfies \eqref{ref-sym24}. Moreover, using \eqref{Ialpha} and \eqref{Kalpha}, we check that the Fourier coefficients of  $\mathcal{I}^\alpha[\varepsilon, f_j](w)$ and $\mathcal{J}^\alpha_k[\varepsilon,f_k,f_j](w) $  are also real for every $f$ satisfying  \eqref{ref-sym}, namely,
$$
\overline{\mathcal{I}^\alpha[\varepsilon, f_j](w)}=\mathcal{I}^\alpha[\varepsilon, f_j](\overline{w}), \qquad\textnormal{and}\qquad \overline{\mathcal{K}^\alpha_k[\varepsilon,f_k,f_j;\lambda](w) }=\mathcal{K}^\alpha_k[\varepsilon,f_k,f_j;\lambda](\overline{w}). 
$$
Then using \eqref{sys2sqg1} we  conclude \eqref{ref-sym24}.
Thus, it remains to check the $m$-fold  symmetry  property of $\mathcal{G}_0^\alpha $, namely, that if
\begin{equation}\label{m-fold-poly}
{f_0(e^{\frac{2\pi i}{m}}w)}=e^{\frac{2\pi i}{m}} f_0({w}) \quad \forall w\in \mathbb{T},
\end{equation}
then
\begin{equation}\label{m-fold2-poly}
\mathcal{G}_0^\alpha (\varepsilon,f;\lambda)(e^{\frac{2\pi i}{m}}{w})=\mathcal{G}_0^\alpha (\varepsilon,f;\lambda)({w}) \quad \forall w\in \mathbb{T}.
\end{equation}
We shall give the  details of the proof in the case $\alpha\in(0,1)$; the  case $\alpha=0$ can be checked in a similar way.
From \eqref{I0} one has 
  \begin{align*}
\overline{\mathcal{I}^0}[\varepsilon, f_0](e^{\frac{2\pi i}{m}}w)&= \frac12\fint_{\mathbb{T}}\frac{e^{-\frac{2\pi i}{m}}\overline{w}-\overline{\tau}+\varepsilon b_0\big({f_0(\overline{\tau})}-{f_0(e^{-\frac{2\pi i}{m}}\overline{w})}\big)}{e^{\frac{2\pi i}{m}}w-\tau+\varepsilon b_0\big(f_0(\tau)-f_0(e^{\frac{2\pi i}{m}}w)\big)}f_0'(\tau)d\tau\notag\\ &\qquad+\fint_{\mathbb{T}}\frac{i\Im\big\{(e^{\frac{2\pi i}{m}}w-\tau)\big({f_0(\overline{\tau})}-{f_0(e^{-\frac{2\pi i}{m}}\overline{w})}\big)\big\}}{(e^{\frac{2\pi i}{m}}w-\tau)\big(e^{\frac{2\pi i}{m}}w-\tau+\varepsilon b_0 f_0(\tau)-\varepsilon b_0 f_0(e^{\frac{2\pi i}{m}}w)\big)}d\tau.
\end{align*}
Using the change of variables $\tau\mapsto e^{\frac{2\pi i}{m}}\tau$, we find
  \begin{align*}
\overline{\mathcal{I}^0}[\varepsilon, f_0](e^{\frac{2\pi i}{m}}w)&= \frac12\fint_{\mathbb{T}}\frac{e^{-\frac{2\pi i}{m}}\overline{w}-e^{-\frac{2\pi i}{m}}\overline{\tau}+\varepsilon b_0\big({f_0(e^{-\frac{2\pi i}{m}}\overline{\tau})}-{f_0(e^{-\frac{2\pi i}{m}}\overline{w})}\big)}{e^{\frac{2\pi i}{m}}w-e^{\frac{2\pi i}{m}}\tau+\varepsilon b_0\big(f_0(e^{\frac{2\pi i}{m}}\tau)-f_0(e^{\frac{2\pi i}{m}}w)\big)}f_0'(e^{\frac{2\pi i}{m}}\tau)e^{\frac{2\pi i}{m}}d\tau\notag\\ &\qquad+\fint_{\mathbb{T}}\frac{i\Im\big\{(e^{\frac{2\pi i}{m}}w-e^{\frac{2\pi i}{m}}\tau)\big({f_0(e^{-\frac{2\pi i}{m}}\overline{\tau})}-{f_0(e^{-\frac{2\pi i}{m}}\overline{w})}\big)\big\}}{(w-\tau)\big(e^{\frac{2\pi i}{m}}w-e^{\frac{2\pi i}{m}}\tau+\varepsilon b_0 f_0(e^{\frac{2\pi i}{m}}\tau)-\varepsilon b_0 f_0(e^{\frac{2\pi i}{m}}w)\big)}d\tau.
\end{align*}
Then, by  \eqref{m-fold-poly}, we deduce that
\begin{align*}
\overline{\mathcal{I}^0}[\varepsilon, f_0](e^{\frac{2\pi i}{m}}w)&= e^{-\frac{2\pi i}{m}}\overline{\mathcal{I}^0}[\varepsilon, f_0](w).
\end{align*}
 In view of \eqref{K} and \eqref{m-fold-poly} we have
\begin{equation*}
\overline{\mathcal{K}_k^0}[\varepsilon,f_\ell, f_0](e^{\frac{2\pi i}{m}}w)=\frac{e^{-\frac{2\pi i}{m}}}{2}\fint_\T \frac{
  \big(\overline{\tau}+\varepsilon b_\ell f_\ell(\overline{\tau})\big) \big(1+\varepsilon b_\ell f_\ell'({\tau})\big)}
  {e^{(k-1)\frac{2\pi i}{m}+\delta_{2\ell}\frac{\vartheta\pi i}{m}}(\varepsilon b_\ell {\tau}+\varepsilon^2 b_\ell^2 f_\ell({\tau}) +d_\ell) - \varepsilon b_0\big({w}+\varepsilon b_0 f_0({w})\big)} \, d\tau.
\end{equation*}
Summing over $k$ then gives
\begin{align*}
\sum_{k=0}^{m-1}\overline{\mathcal{K}_k^0}[\varepsilon,f_\ell, f_0](e^{\frac{2\pi i}{m}}w)&
 =e^{-\frac{2\pi i}{m}}\sum_{k=0}^{m-1}\overline{\mathcal{K}_k^0}[\varepsilon,f_\ell, f_0](w),
\end{align*}
concluding the proof of {\rm(i)}. 
 The proof of {\rm(ii)} follows immediately from Proposition~\ref{proposition2}(ii), \eqref{p-gG2} and \eqref{gamma0-om0}. 
In order to show {\rm(iii)} we  use  Proposition~\ref{proposition2}(ii)  and  \eqref{jacob-mat} to get,
 for all $h=(h_0,h_1,h_2)\in  \mathcal{V}^\alpha$ and  $(\dot\Omega,
\dot\gamma_2)\in\mathbb{R}^2$, 
\begin{equation*}
D_{(f;\lambda)}\mathcal{G}^\alpha(0,0;\lambda^*)\begin{pmatrix}
\dot\Omega\\
\dot\gamma_2\\
h
\end{pmatrix}(w)= -\begin{pmatrix}
0  & 0\\
d_1 &  \frac{\widehat{C}_\alpha }{2}\frac{ T_\alpha^+( d ,\vartheta)}{d_1^{1+\alpha}}\\
 d_2 &  \frac{\widehat{C}_\alpha }{2} \frac{ S_\alpha}{2d_2^{1+\alpha}}
\end{pmatrix}
\begin{pmatrix} \dot\Omega\\ \dot \gamma_2\end{pmatrix}\, \Im\{w\}
+\sum_{n\geq 1}M_n^\alpha\begin{pmatrix}
{\gamma_0}\, a_n^0 \\
{\gamma_1}\, a_n^1
\\
\gamma_2^*\, a_n^2
\end{pmatrix}\,\Im\{w^{n+1}\},
\end{equation*}
where $M_n^\alpha$ is given by \eqref{lambdan}. 
Proposition~\ref{proposition2}{\rm(iv)} and the assumption \eqref{det-j-poly} then imply {\rm(iii)}.

\vspace{0.2cm}

The existence and uniqueness in {\rm(iv)} follow form the implicit function theorem. In order to compute the asymptotic of the solution, we shall use the  formula 
\begin{align}\label{comp}
\partial_\varepsilon \big(f(\varepsilon),\lambda(\varepsilon)\big)\big|_{\varepsilon=0}=-D_{(f;\lambda)} \mathcal{G}^\alpha\big(0,0;\lambda^*)^{-1}\partial_\varepsilon \mathcal{G}^\alpha(0,0;\lambda^*).
\end{align}
For any $H\in \mathcal{W}^\alpha$ with the expansion
$$
H(w)=\sum_{n\geq 0}\begin{pmatrix}
A_n^0  \\
A_n^1\\
A_n^2
\end{pmatrix} \,\Im\{w^{n+1}\},
$$
with $A_n^0=0$ if $n$ is not a multiple of $m$, we have
\begin{align}\label{inv}
&D_{(f,\lambda)}\mathcal{G}^\alpha(0,0;\lambda^*)^{-1}H(w)=\\ &\Big(
\frac{1}{\gamma_0}\sum_{n\geq 1}\frac{A_n^0}{M_n^\alpha}\, \overline{w}^n,
\frac{1}{\gamma_1}\sum_{n\geq 1}\frac{A_n^1}{M_n^\alpha}\, \overline{w}^n,
\frac{1}{\gamma_2^*}\sum_{n\geq 1}\frac{A_n^2}{M_n^\alpha}\, \overline{w}^n\, ; \frac{ d ^{\alpha+2}T_\alpha^+( d ,\vartheta)A_0^2-\frac12{S_\alpha}A_0^1}{\det\big(D_\lambda \mathcal P^\alpha(\lambda^*)\big)},\frac{A_0^2d_1-A_0^1d_2}{\det\big(D_\lambda \mathcal P^\alpha(\lambda^*)\big)}
\Big),\notag
\end{align}
where $\det\big(D_\lambda \mathcal P^\alpha(\lambda^*)\big)$ was calculated in \eqref{det-j-poly}.
On the other hand, from \eqref{sys2sqg1}--\eqref{sys2sqg2}  and   \eqref{G0}--\eqref{Gj} we have
\begin{equation}\label{gfin}
\begin{split}
&\mathcal{G}^\alpha_{0}(\varepsilon,0;\lambda)(w)=\Im \Big\{\Big( \sum_{\ell=1}^2\gamma_\ell\sum_{k=0}^{m-1} {\mathcal{K}_{k}^\alpha}[\varepsilon,f_\ell, f_0](w)\Big)  \overline{ w}
\Big\},
\\
&\mathcal{G}^\alpha_{j}(\varepsilon,0;\lambda)(w)=\Im \Big\{
  \Omega{ d_j}\overline{ w}+
\Big(\gamma_0{\mathcal{K}_{0}^\alpha}[\varepsilon,f_0, f_j](w)+ \sum_{\ell=1}^2\gamma_\ell\sum_{k=\delta_{\ell j}}^{m-1} {\mathcal{K}_{k}^\alpha}[\varepsilon,f_\ell, f_j](w)\Big)\overline{ w} \Big\}, 
\end{split}
\end{equation}
with $ j=1,2$.
\paragraph{\bf Case $\alpha=0$.} Differentiating  \eqref{K} with respect to $\varepsilon $ gives 
\begin{align*}
\partial_\varepsilon \overline{\mathcal{K}_k^0}[\varepsilon,0, 0](w)|_{\varepsilon=0}
&=\frac{1}{2\big(\nu_{k\ell j}d_\ell-d_j\big)^{2}}\fint_\T \overline{\tau}\big(\nu_{k\ell j} b_\ell  {\tau} -  b_n{w}\big) \, d\tau\\ 
&=\frac{  b_n{w}}{2\big(\nu_{k\ell j}d_\ell-d_j\big)^{2}},
\end{align*}
with $\nu_{k\ell j}=\exp\big(2k\pi i/m+(\delta_{2\ell}-\delta_{2j})\vartheta\pi i/{m}\big)$. It follows from \eqref{gfin} that
\begin{equation*}
\begin{split}
&\partial_\varepsilon \mathcal{G}^0_{0}(\varepsilon,0;\lambda)|_{\varepsilon=0}(w)=\frac{b_0}{ 2 }\,\Im \Big\{ \sum_{\ell=1}^2 \frac{\gamma_\ell}{ d_\ell^{2} }\sum_{k=0}^{m-1}e^{\frac{4k\pi i}{m}+(\delta_{2\ell}-\delta_{2j})\frac{2\vartheta\pi i}{m}} \overline{ w}^2
\Big\}=0,
\\
&\partial_\varepsilon\mathcal{G}^0_{j}(\varepsilon,0;\lambda)|_{\varepsilon=0}(w)=\frac{b_j}{ 2 d_j^{2}}\, 
 \Big(\gamma_0+ \sum_{\ell=1}^2\sum_{k=\delta_{\ell j}}^{m-1}  \frac{\gamma_\ell
}{ \big(\overline{\nu_{k\ell j}} \frac{d_\ell}{d_j}-1\big)^{2}}\Big) \,\Im \{\overline{ w}^2 \}=\frac{b_j}{ 2 d_j^{2}}\, 
 \mathcal{Q}_j^0 \,\Im \{\overline{ w}^2 \}.
\end{split}
\end{equation*}
 Combining the two last identities with \eqref{comp} and \eqref{inv}  yields
\begin{align*}
\partial_\varepsilon \big( f(\varepsilon),\lambda(\varepsilon)\big)\big|_{\varepsilon=0}=\Big(0,\frac{b_1\mathcal{Q}_1^0}{\gamma_1d_1^{\alpha+2}}\overline{w} ,\frac{b_2\mathcal{Q}_2^0}{\gamma_2^* d_2^{\alpha+2}}\overline{w}\, ;0,0 \Big).
\end{align*} 
\paragraph{\bf Case $\alpha\in (0,1)$.}
From \eqref{Kalpha} we have
\begin{align*}
\mathcal{K}^\alpha_{k}[\varepsilon,0,0](w)
 &=\alpha C_\alpha \frac{\nu_{k\ell j} }{ b_ \ell} \bigg[\fint_\T\int_0^1\frac{
\Re\big[(\nu_{k\ell j}  d_\ell-d_j)(\overline{\nu_{k\ell j}}  
 b_ \ell \overline{\tau} -  b_j \overline{w})\big] }
  { | \nu_{k\ell j}\big(
t\varepsilon b_\ell \tau+d_\ell\big) - \big(t\varepsilon b_j w +d_j\big)|^{\alpha+2}}dt\,  d\tau 
\\ &\qquad\qquad\qquad + \fint_\T\int_0^1\frac{
t \varepsilon | \nu_{k\ell j}
 b_ \ell \tau -  b_j w |^2}
  { |\nu_{k\ell j} \big(
t\varepsilon b_\ell \tau+d_\ell\big) - \big(t\varepsilon b_j w +d_j\big)|^{\alpha+2}} dt\,  d\tau\bigg].
\end{align*}
with $\nu_{k\ell j}$ defined as for $\alpha=0$. Applying formula \eqref{taylor0} gives
\begin{equation*}
\begin{split}
\mathcal{K}^\alpha_{k}[\varepsilon,f_\ell,f_j](w)
&=\frac{\alpha C_\alpha}{ 4} \bigg[
\frac{2\big(\nu_{k\ell j} d_\ell-d_j\big)+\varepsilon b_j w  }{ | \nu_{k\ell j} d_\ell - d_j|^{\alpha+2}}
+(\alpha+2) \frac{\big(\nu_{k\ell j} d_\ell - d_j\big)^2
 \varepsilon b_j \overline{w} }{ |\nu_{k\ell j} d_\ell - d_j|^{\alpha+4}} \bigg]+o(\varepsilon).
\end{split}
\end{equation*}
Inserting the last identity into \eqref{gfin} and then differentiating with respect $\varepsilon$, we obtain  
\begin{equation*}
\begin{split}
&\partial_\varepsilon \mathcal{G}^\alpha_{0}(0,0;\lambda)(w)=\frac{(\alpha+2) \alpha C_\alpha}{ 4 }\,\Im \Big\{ b_0\overline{ w}^2 \sum_{\ell=1}^2 \frac{\gamma_\ell}{ d_\ell^{\alpha+2} }\sum_{k=0}^{m-1}e^{(2k+\sigma(\ell,j)\vartheta)\frac{2\pi i}{m}}   
\Big\}=0,
\\
&\partial_\varepsilon\mathcal{G}^\alpha_{j}(0,0;\lambda)(w)=\frac{(\alpha+2) \alpha C_\alpha}{ 4 d_j^{\alpha+2}}\, b_j
  \Big(\gamma_0+ \sum_{\ell=1}^2\gamma_\ell\sum_{k=\delta_{\ell j}}^{m-1}  \frac{\big(\nu_{k\ell j} \frac{d_\ell}{d_j}-1\big)^2
}{ |\nu_{k\ell j} \frac{d_\ell}{d_j}-1|^{\alpha+4}} \Big)\,\Im \{\overline{ w}^2\}.
\end{split}
\end{equation*}
Combining the two last identities with \eqref{comp}, \eqref{inv} and \eqref{gam1hatg},  we get 
\begin{align*}
\partial_\varepsilon \big( f(\varepsilon),\lambda(\varepsilon)\big)\big|_{\varepsilon=0}=\Xi_\alpha\Big(0,\frac{b_1\mathcal{Q}^\alpha_1}{\gamma_1d_1^{\alpha+2}}\overline{w} ,\frac{b_2\mathcal{Q}_2^\alpha}{\gamma_2^* d_2^{\alpha+2}}\overline{w}\, ; 0,0 \Big),
\end{align*} 
where $\mathcal{Q}_j^\alpha$ and $\Xi_\alpha$ are given by \eqref{Qjalpha}.

The convexity in {\rm (v)} is established in exactly the same was as in the proof of Theorem~\ref{prop:ift}. This ends the proof of the theorem.
\end{proof}
\begin{remark}
Note that, by setting $\gamma_2=0$ and $\lambda=\Omega$ in \eqref{intial-vort},  \eqref{sys2sqg1}--\eqref{sys2sqg2}  and \eqref{G0}--\eqref{Gj}, 
we can recover the existence and uniqueness result  of the body-centered polygonal configuration through Theorem~\ref{thm:polygon}. This remains equally true for the rotating vortex polygon by setting $\gamma_0=\gamma_2=0$ and $\lambda=\Omega$.
\end{remark}

\section*{Acknowledgments}
Miles H.~Wheeler was partially supported by NSF-DMS grant 1400926. The work of Z. Hassainia is supported by Tamkeen under the NYU Abu Dhabi Research Institute grant of the center SITE.

\vspace{0.3cm}

\bibliographystyle{plain}
\bibliography{multipole}

\begin{thebibliography}{10}

\bibitem{ADDMW}
Weiwei Ao, Juan Davila, Manuel del Pino, Monica Musso, and Juncheng Wei.
\newblock Travelling and rotating solutions to the generalized inviscid surface
  quasi-geostrophic equation.
\newblock {\em Transactions of the American Mathematical Society},
  374(374):6665--6689, 2021.

\bibitem{Aref}
Hassan Aref.
\newblock Vortex crystals.
\newblock {\em Advances in applied Mechanics}, 39:2--81, 2003.

\bibitem{CB}
A.~L. Bertozzi and P.~Constantin.
\newblock Global regularity for vortex patches.
\newblock {\em Comm. Math. Phys.}, 152(1):19--28, 1993.

\bibitem{Bu}
Jacob Burbea.
\newblock Motions of vortex patches.
\newblock {\em Lett. Math. Phys.}, 6(1):1--16, 1982.

\bibitem{cqzz:alpha}
Daomin Cao, Guolin Qin, Weicheng Zhan, and Changjun Zou.
\newblock Existence and regularity of co-rotating and travelling global
  solutions for the generalized sqg equation.
\newblock {\em arXiv preprint arXiv:2103.03992}, Mar 2021.

\bibitem{cqzz:doubly}
Daomin Cao, Guolin Qin, Weicheng Zhan, and Changjun Zou.
\newblock Existence of co-rotating and travelling vortex patches with doubly
  connected components for active scalar equations.
\newblock {\em arXiv preprint arXiv:2103.04328}, Mar 2021.

\bibitem{CCG}
Angel Castro, Diego C\'{o}rdoba, and Javier G\'{o}mez-Serrano.
\newblock Existence and regularity of rotating global solutions for the
  generalized surface quasi-geostrophic equations.
\newblock {\em Duke Math. J.}, 165(5):935--984, 2016.

\bibitem{CCG1}
Angel Castro, Diego C\'{o}rdoba, and Javier G\'{o}mez-Serrano.
\newblock Uniformly rotating analytic global patch solutions for active
  scalars.
\newblock {\em Ann. PDE}, 2(1):Art. 1, 34, 2016.

\bibitem{CCG2}
Angel Castro, Diego C\'{o}rdoba, and Javier G\'{o}mez-Serrano.
\newblock Uniformly rotating smooth solutions for the incompressible 2{D}
  {E}uler equations.
\newblock {\em Arch. Ration. Mech. Anal.}, 231(2):719--785, 2019.

\bibitem{Ce}
M.~Celli, E.~A. Lacomba, and E.~P\'{e}rez-Chavela.
\newblock On polygonal relative equilibria in the {$N$}-vortex problem.
\newblock {\em J. Math. Phys.}, 52(10):103101, 8, 2011.

\bibitem{C}
Jean-Yves Chemin.
\newblock {\em Perfect incompressible fluids}, volume~14 of {\em Oxford Lecture
  Series in Mathematics and its Applications}.
\newblock The Clarendon Press, Oxford University Press, New York, 1998.
\newblock Translated from the 1995 French original by Isabelle Gallagher and
  Dragos Iftimie.

\bibitem{C-F-M-R}
Diego C\'{o}rdoba, Marco~A. Fontelos, Ana~M. Mancho, and Jose~L. Rodrigo.
\newblock Evidence of singularities for a family of contour dynamics equations.
\newblock {\em Proc. Natl. Acad. Sci. USA}, 102(17):5949--5952, 2005.

\bibitem{Crowdy}
Darren~G. Crowdy.
\newblock Exact solutions for rotating vortex arrays with finite-area cores.
\newblock {\em J. Fluid Mech.}, 469:209--235, 2002.

\bibitem{DHH}
Francisco de~la Hoz, Zineb Hassainia, and Taoufik Hmidi.
\newblock Doubly connected {V}-states for the generalized surface
  quasi-geostrophic equations.
\newblock {\em Arch. Ration. Mech. Anal.}, 220(3):1209--1281, 2016.

\bibitem{DHHM}
Francisco de~la Hoz, Zineb Hassainia, Taoufik Hmidi, and Joan Mateu.
\newblock An analytical and numerical study of steady patches in the disc.
\newblock {\em Anal. PDE}, 9(7):1609--1670, 2016.

\bibitem{DHMV}
Francisco de~la Hoz, Taoufik Hmidi, Joan Mateu, and Joan Verdera.
\newblock Doubly connected {$V$}-states for the planar {E}uler equations.
\newblock {\em SIAM J. Math. Anal.}, 48(3):1892--1928, 2016.

\bibitem{DZ}
Gary~S. Deem and Norman~J. Zabusky.
\newblock Vortex waves: Stationary "{$V$}-states," interactions, recurrence,
  and breaking.
\newblock {\em Phys. Rev. Lett.}, 40:859--862, Mar 1978.

\bibitem{D}
David~G. Dritschel.
\newblock A general theory for two-dimensional vortex interactions.
\newblock {\em J. Fluid Mech.}, 293:269--303, 1995.

\bibitem{F}
L.~E. Fraenkel.
\newblock {\em An introduction to maximum principles and symmetry in elliptic
  problems}.
\newblock Number 128. Cambridge University Press, 2000.

\bibitem{Gancedo}
Francisco Gancedo.
\newblock Existence for the {$\alpha$}-patch model and the {QG} sharp front in
  {S}obolev spaces.
\newblock {\em Adv. Math.}, 217(6):2569--2598, 2008.

\bibitem{G-Kar}
Claudia Garc\'{\i}a.
\newblock K\'{a}rm\'{a}n vortex street in incompressible fluid models.
\newblock {\em Nonlinearity}, 33(4):1625--1676, 2020.

\bibitem{G}
Claudia Garc{\'\i}a.
\newblock Vortex patches choreography for active scalar equations.
\newblock {\em Journal of Nonlinear Science}, 31(5):1--31, 2021.

\bibitem{gh:global}
Claudia Garc{\'\i}a and Susanna~V. Haziot.
\newblock Global bifurcation for corotating and counter-rotating vortex pairs,
  2022.

\bibitem{GHS}
Claudia Garc\'{\i}a, Taoufik Hmidi, and Juan Soler.
\newblock Non uniform rotating vortices and periodic orbits for the
  two-dimensional {E}uler equations.
\newblock {\em Arch. Ration. Mech. Anal.}, 238(2):929--1085, 2020.

\bibitem{GGS}
Ludovic Godard-Cadillac, Philippe Gravejat, and Didier Smets.
\newblock Co-rotating vortices with n fold symmetry for the inviscid surface
  quasi-geostrophic equation.
\newblock {\em arXiv preprint arXiv:2010.08194}, 2020.

\bibitem{Gom}
Javier G\'{o}mez-Serrano.
\newblock On the existence of stationary patches.
\newblock {\em Adv. Math.}, 343:110--140, 2019.

\bibitem{Gomez-Jaemin-Jia2021}
Javier G{\'o}mez-Serrano, Jaemin Park, and Jia Shi.
\newblock Existence of non-trivial non-concentrated compactly supported
  stationary solutions of the 2d euler equation with finite energy.
\newblock {\em arXiv preprint arXiv:2112.03821}, 2021.

\bibitem{gomez2019symmetry}
Javier G\'{o}mez-Serrano, Jaemin Park, Jia Shi, and Yao Yao.
\newblock Symmetry in stationary and uniformly-rotating solutions of active
  scalar equations.
\newblock {\em Duke Mathematical Journal}, 170(13):2957--3038, 2021.

\bibitem{K}
{G}ustav {K}richhoff.
\newblock Vorlesungen {\"u}ber mathematische physik.
\newblock {\em Monatsh. Math. Phys.}, 8(1):A29--A29, 1897.

\bibitem{HH}
Zineb Hassainia and Taoufik Hmidi.
\newblock On the {V}-states for the generalized quasi-geostrophic equations.
\newblock {\em Comm. Math. Phys.}, 337(1):321--377, 2015.

\bibitem{HH2}
Zineb Hassainia and Taoufik Hmidi.
\newblock Steady asymmetric vortex pairs for euler equations.
\newblock {\em Discrete Contin. Dyn. Syst.}, 41(4):1939--1969, 2021.

\bibitem{HMW}
Zineb Hassainia, Nader Masmoudi, and Miles~H. Wheeler.
\newblock Global bifurcation of rotating vortex patches.
\newblock {\em Comm. Pure Appl. Math.}, 73(9):1933--1980, 2020.

\bibitem{Hmidi}
Taoufik Hmidi.
\newblock On the trivial solutions for the rotating patch model.
\newblock {\em J. Evol. Equ.}, 15(4):801--816, 2015.

\bibitem{HM2}
Taoufik Hmidi and Joan Mateu.
\newblock Bifurcation of rotating patches from {K}irchhoff vortices.
\newblock {\em Discrete Contin. Dyn. Syst.}, 36(10):5401--5422, 2016.

\bibitem{HM3}
Taoufik Hmidi and Joan Mateu.
\newblock Degenerate bifurcation of the rotating patches.
\newblock {\em Adv. Math.}, 302:799--850, 2016.

\bibitem{Hmidi-Mateu}
Taoufik Hmidi and Joan Mateu.
\newblock Existence of corotating and counter-rotating vortex pairs for active
  scalar equations.
\newblock {\em Comm. Math. Phys.}, 350(2):699--747, 2017.

\bibitem{HMV}
Taoufik Hmidi, Joan Mateu, and Joan Verdera.
\newblock Boundary regularity of rotating vortex patches.
\newblock {\em Arch. Ration. Mech. Anal.}, 209(1):171--208, 2013.

\bibitem{HMV2}
Taoufik Hmidi, Joan Mateu, and Joan Verdera.
\newblock On rotating doubly connected vortices.
\newblock {\em J. Differential Equations}, 258(4):1395--1429, 2015.

\bibitem{Keady}
G.~Keady.
\newblock Asymptotic estimates for symmetric vortex streets.
\newblock {\em J. Austral. Math. Soc. Ser. B}, 26(4):487--502, 1985.

\bibitem{kielhofer}
Hansj{\"o}rg Kielh{\"o}fer.
\newblock {\em Bifurcation theory}, volume 156 of {\em Applied Mathematical
  Sciences}.
\newblock Springer-Verlag, New York, 2004.
\newblock An introduction with applications to PDEs.

\bibitem{N}
Paul~K. Newton.
\newblock {\em The {$N$}-vortex problem}, volume 145 of {\em Applied
  Mathematical Sciences}.
\newblock Springer-Verlag, New York, 2001.
\newblock Analytical techniques.

\bibitem{No}
J.~Norbury.
\newblock Steady planar vortex pairs in an ideal fluid.
\newblock {\em Comm. Pure Appl. Math.}, 28(6):679--700, 1975.

\bibitem{oneil:stationary}
Kevin~Anthony O'Neil.
\newblock Stationary configurations of point vortices.
\newblock {\em Trans. Amer. Math. Soc.}, 302(2):383--425, 1987.

\bibitem{O}
Edward~A. Overman, II.
\newblock Steady-state solutions of the {E}uler equations in two dimensions.
  {II}. {L}ocal analysis of limiting {$V$}-states.
\newblock {\em SIAM J. Appl. Math.}, 46(5):765--800, 1986.

\bibitem{P}
R.~T. Pierrehumbert.
\newblock A family of steady, translating vortex pairs with distributed
  vorticity.
\newblock {\em Journal of Fluid Mechanics}, 99(1):129--144, 1980.

\bibitem{R}
Coralie Renault.
\newblock Relative equilibria with holes for the surface quasi-geostrophic
  equations.
\newblock {\em J. Differential Equations}, 263(1):567--614, 2017.

\bibitem{Rodrigo}
Jos\'{e}~Luis Rodrigo.
\newblock On the evolution of sharp fronts for the quasi-geostrophic equation.
\newblock {\em Comm. Pure Appl. Math.}, 58(6):821--866, 2005.

\bibitem{Ros}
Matthew Rosenzweig.
\newblock Justification of the point vortex approximation for modified surface
  quasi-geostrophic equations.
\newblock {\em SIAM J. Math. Anal.}, 52(2):1690--1728, 2020.

\bibitem{S}
P.~G. Saffman.
\newblock {\em Vortex dynamics}.
\newblock Cambridge Monographs on Mechanics and Applied Mathematics. Cambridge
  University Press, New York, 1992.

\bibitem{SS}
P.~G. Saffman and R.~Szeto.
\newblock Equilibrium shapes of a pair of equal uniform vortices.
\newblock {\em Phys. Fluids}, 23(12):2339--2342, 1980.

\bibitem{T}
Bruce Turkington.
\newblock Corotating steady vortex flows with {$N$}-fold symmetry.
\newblock {\em Nonlinear Anal.}, 9(4):351--369, 1985.

\bibitem{Wan}
Yieh~Hei Wan.
\newblock Desingularizations of systems of point vortices.
\newblock {\em Phys. D}, 32(2):277--295, 1988.

\bibitem{New-fam}
B.~B. Xue, E.~R. Johnson, and N.~R. McDonald.
\newblock New families of vortex patch equilibria for the two-dimensional euler
  equations.
\newblock {\em Physics of Fluids}, 29(12):123602, 2017.

\bibitem{Y}
Victor~Iosifovich Yudovich.
\newblock Non-stationary flows of an ideal incompressible fluid.
\newblock {\em Zhurnal Vychislitel'noi Matematiki i Matematicheskoi Fiziki},
  3(6):1032--1066, 1963.

\end{thebibliography}

\noindent \textsc{Department of Mathematics, 
New York University in Abu Dhabi, 
Saadiyat Island, 
P.O. Box 129188, 
Abu Dhabi, 
United Arab Emirates}
,\\
\textit{E-mail address:} \texttt{zh14@nyu.edu}\\

\noindent \textsc{Department of Mathematical Sciences,
University of Bath,
Bath BA2 7AY, UK},\\
\textit{E-mail address:} \texttt{mw2319@bath.ac.uk}

\end{document}